\theoremstyle{plain}
\newtheorem{theorem}{Theorem}[subsection]
\newtheorem{proposition}{Proposition}[subsection]
\newtheorem{definition}{Definition}[subsection]
\newtheorem{remark}{Remark}[subsection]
\newtheorem{example}{Example}[subsection]
\newtheorem{claim}{Claim}[subsection]
\newtheorem{lemma}{Lemma}[subsection]
\newtheorem{corollary}{Corollary}[subsection]
\newtheorem{theorema}{Theorem}
\newcommand{\topo}{\operatorname{top}}
\begin{document}

\title[Equilibrium States for Open Zooming Systems]{Equilibrium States for Open Zooming Systems}


\author[E. Santana]{Eduardo Santana}
\address{Eduardo Santana, Universidade Federal de Alagoas, 57200-000 Penedo, Brazil}
\email{jemsmath@gmail.com}

\dedicatory{Dedicated to the memory of Maur\'icio Peixoto}

\thanks{The author was partially supported by CNPq - Brazil,   under the project with reference 409198/2021-8.} 

\date{\today}

\maketitle

\begin{abstract} 
In this work, we construct Markov structures for zooming systems adapted to holes of a special type. Our construction is based on backward contractions provided by zooming times. These Markov structures may be used to code the open zooming systems.

In the context of open zooming systems, possibly with the presence of a critical/singular set, we prove the existence of finitely many ergodic zooming equilibrium states for zooming potentials whose induced potential is locally Hölder. For example, the zooming Hölder continuous. Among the zooming ones are the so-called \textit{hyperbolic potentials} and also what we call \textit{bounded distortion potentials}, having as a particular case the \textit{pseudo-geometric potentials} $\phi_{t} = -t \log J_{\mu}f $, where $J_{\mu}f$ is a Jacobian of the reference zooming measure. Moreover, for this last class of potentials, we show the existence of what we call pseudo-conformal measures. Afterwards, if there exist infintely many ergodic zooming measures, we prove uniqueness of equilibrium state with no requirement of transitivity. 

The technique consists in using an inducing scheme in a finite Markov structure with infinitely many symbols to code the dynamics to obtain an equilibrium state for the associated symbolic dynamics and then projecting it to obtain an equilibrium state for the original map. To obtain a pseudo-conformal measure, we "spread" the conformal one which exists for the symbolic dynamics. The uniqueness is obtained by showing that the equilibrium states are liftable to the same inducing scheme.

Finally, we show that the class of hyperbolic potentials is equivalent to the class of continuous zooming potentials. Moreover, we give a class of examples of hyperbolic potentials (including the null one). It implies the existence and uniqueness of equilibrium state (and measure of maximal entropy). Among the maps considered is the important class known as Viana maps.
\end{abstract}

\bigskip

\section{Introduction}
The theory of nonuniformly expanding maps has been widely studied and the expanding measures are related. They are particular cases of what we call zooming systems, related to zooming measures introduced in \cite{Pi1}. Here, we deal with open zooming systems.

Roughly speaking, a \textbf{\textit{zooming system}} is a map which extends the notion of non-uniform expansion,
where we have a type of expansion obtained in the presence of hyperbolic times. The zooming times extend this notion beyond exponential contractions.
In our context, a system is said to be \textbf{\textit{open}} when the phase space is not invariant. In other words, we begin with a closed map
$f:M \to M$ and consider a Borel set $H \subset M$, in order to study the orbits with respect to $H$ (called the \textbf{\textit{hole}} of the system):
if a point $x \in M$ is such that its orbit pass through $H$, we consider that the point $x$ escapes from the system. We observe that it reduces to a closed map when $H = \emptyset$.
In particular, we study the set of points that never pass through $H$, called the \textbf{\textit{survivor set}}. For zooming systems, we consider holes such that the survivor set contains the zooming set. Also, once a reference measure $m$ is fixed, we study the \textbf{\textit{escape rate}} defined by:
\[
\displaystyle \mathcal{E}(f,m,H) = - \lim_{n \to \infty}\frac{1}{n}\log m\Big{(}\cap_{j=0}^{n-1} f^{-j}(M \backslash H)\Big{)}.
\]
The study of open dynamics began with Pianigiani and Yorke in the late 1970s (see \cite{PY}). The abstract concept of an open system is important because it leads immediately to the notion of a conditionally invariant measure and escape rate along with a host of detailed questions about how mass escapes or fails to escape from the system under time evolution.

In order to study the escape rate, for example, Young towers are quite useful and the inducing schemes as well. For interval maps, inducing schemes are studied in \cite{DHL} and changes are made in \cite{BDM} to study inducing schemes for interval maps with holes. In the context of open dynamics, the inducing schemes need to be \textbf{\textit{adapted to the hole}}, as considered in \cite{DT}, for example (and referred to as \textbf{\textit{respecting the hole}}). It means that an element of the partition can only escape entirely.
It is an important property for constructing the Young towers with holes and what we construct here may be used for this. 

However, in this work we will not deal with escape rate and inducing schemes will play an important role in proving the existence of equilibrium states. We begin by proving the existence of inducing schemes adapted to holes of a special type. The context is the zooming systems and the hole is obtained from small balls, by erasing intersections with regular pre-images. Our construction follows the work of Pinheiro in \cite{Pi1} along the same lines, where the main ingredient is the zooming times.

The theory of equilibrium states in dynamical systems was first developed by Sinai, Ruelle and Bowen in the sixties and seventies. 
It was based on applications of techniques of Statistical Mechanics to smooth dynamics. The classical theory of equilibrium states is developed for continuous maps. Given a continuous map $f: M \to M$ on a compact metric space $M$ and a continuous potential $\phi : M \to \mathbb{R}$, an  \textbf{\textit{equilibrium state}} is an 
invariant measure that satisfies a variational principle, that is, a measure $\mu$ such that
\begin{equation}\label{equilibrium}
\displaystyle h_{\mu}(f) + \int \phi d\mu = \sup_{\eta \in \mathcal{M}_{f}(M)} \bigg{\{} h_{\eta}(f) + \int \phi d\eta \bigg{\}},
\end{equation}
where $\mathcal{M}_{f}(M)$ is the set of $f$-invariant probabilities on $M$ and $h_{\eta}(f)$ is the so-called metric entropy of $\eta$.

For measurable maps we define equilibrium states as follows.
Given a measurable map $f: M \to M$ on a compact metric space $M$ for which the set $\mathcal{M}_{f}(M)$ of invariant measures is non-empty (for example, the continuous maps) and a measurable potential $\phi : M \to \mathbb{R}$, we define the \textbf{\textit{pressure}} $P_{f}(\phi)$ as 
\begin{equation}\label{pressure}
\displaystyle P_{f}(\phi) : = \sup_{\eta \in \mathcal{M}_{f}(M)} \bigg{\{} h_{\eta}(f) + \int \phi d\eta \bigg{\}}
\end{equation}
and an \textbf{\textit{equilibrium state}} is an invariant measure that attains the supremum, and it is defined analogously to the case for continuous maps (see equation \ref{equilibrium}).

In the context of uniform hyperbolicity, which includes uniformly expanding maps, equilibrium states do exist and are unique if the potential is H\"older continuous
and the map is transitive. In addition, the theory for finite shifts was developed and used to achieve the results for smooth dynamics.

Beyond uniform hyperbolicity, the theory is still far from complete. It was studied by several authors, including Bruin, Keller, Demers, Li, Rivera-Letelier, Iommi and Todd 
 \cite{BDM, BK, BT,DT,IT1,IT2,LRL} for interval maps; Denker and Urbanski  \cite{DU} for rational maps; Leplaideur, Oliveira and Rios 
\cite{LOR} for partially hyperbolic horseshoes; Buzzi, Sarig and Yuri \cite{BS,Y}, for countable Markov shifts and for piecewise expanding maps in one and higher dimensions. 
For local diffeomorphisms with some kind of non-uniform expansion, there are results due to Oliveira \cite{O}; Arbieto, Matheus and Oliveira \cite{AMO};
Varandas and Viana \cite{VV}, all of whom proved the existence and uniqueness of equilibrium states for potentials with low oscillation. Also, for this type of map,
Ramos and Viana  \cite{RV}  proved it for the so-called  \textbf{\textit{hyperbolic potentials}}, which include these previous ones for the case of non-uniform expansion. The hyperbolicity of the potential is
characterized by the fact that the pressure emanates from the hyperbolic region. In most of these studies previously cited, the maps do not have the presence of critical sets. In \cite{BDM} and \cite{DT}, for example, the authors develop results for open interval maps with critical sets, but not for hyperbolic potentials and, recently, Alves, Oliveira and Santana proved the existence of equilibrium states for hyperbolic potentials, possibly with the presence of a critical set (see \cite{AOS}). We will see that the potentials considered in \cite{LRL}, which allow critical sets, are included in the potentials considered in \cite{AOS}. Here, we give an example of a class of hyperbolic potentials. It includes the null potential for Viana maps. We stress that one of the consequences in this paper is the existence and uniqueness of measures of maximal entropy for Viana maps and we observe that in \cite{ALP} the authors show, in particular, the existence of at most countably many ergodic measures of maximal entropy. In \cite{LOP} the authors prove the existence of at most one. In \cite{PV} Pinheiro-Varandas obtain existence and uniqueness by using another technique than us for what they call \textbf{\textit{expanding potentials}}. They are also considered in \cite{AOS}, include the hyperbolic ones and are included in our \textbf{\textit{zooming potentials}}. We prove that the class of hyperbolic potentials is equivalent to the class of continuous zooming potentials and that they include the null one.

For interval maps and the so-called \textbf{\textit{geometric potentials}} $\phi_{t} = -t\log \mid Df \mid$, equilibrium states were studied in  \cite{BT}, \cite{IT1}, \cite{IT2} and \cite{PS}. These results inspired ours, for measurable zooming systems on metric spaces, where the potential is $\phi_{t} = -t\log J_{\mu}f$ and $J_{\mu}f$ is a Jacobian of the reference measure and we call them \textbf{\textit{pseudo-geometric potentials}}, which are examples of measurable ones. We generalize this type of potential as what we call  \textbf{\textit{bounded distortion potentials}}. We also prove it for zooming potentials with finite pressure and locally Holder induced potential. All results are for open dynamics, which reduce to the corresponding closed dynamics if the hole is empty. We also mention the works \cite{DT2} and \cite{PU} for important comprehension of the theory of equilibrium states for open systems and Hölder and geometric potentials. 

Our strategy is also similar and follows exactly along the same lines of \cite{AOS}, since we do not use the analytical approach of the transfer operator in order to obtain conformal measures. We use results on countable Markov shifts by Sarig for the ``coded'' dynamics in inducing schemes constructed here, where a Markov structure is constructed. We prove that there exist finitely many ergodic equilibrium states
that are zooming measures. Moreover, by following ideas of Iommi and Todd in \cite{IT1} we obtain a \textbf{\textit{pseudo-conformal}} measure with no need for a transfer operator. The uniqueness of the equilibrium state is obtained afterwards. In our setting, if the survivor set is disjoint from the hole, then all the ergodic equilibrium states give full measure to the survivor set, which contains the zooming set.

Hence, we extend to the open context some results by many authors. For example, the recent work \cite{PV} by V. Pinheiro and P. Varandas, where they obtain existence and uniqueness of equilibrium states for closed zooming systems which are strongly topologically transitive and also with exponential contractions. Here, we use another technique and also obtain this for open systems, allowing nonexponential contractions, besides finiteness and also uniqueness of ergodic equilibrium states with no requirement of transitivity.

\bigskip

\textbf{Organization of the paper.} This paper is organized as follows. In Section \ref{Setup} we give the key definitions and state the main results. In particular, we define zooming systems and introduce zooming potentials. Section \ref{Preliminary} is devoted to basic definitions and results from the background theory. In Section \ref{Markov} we prove Theorem \ref{A} about the construction of a Markov structure for open systems with special holes. The section \ref{Lifted} contains some results from the theory of O. Sarig for the thermodynamic formalism for countable shifts. In Section \ref{Finiteness} we prove part of Theorem \ref{B}, about finiteness of equilibrium states. Section \ref{Uniqueness} establishes the uniqueness. In section \ref{Hyperbolic} we prove that the class of hyperbolic potentials is equivalent to the class of continuous zooming potentials and give a class of hyperbolic potentials, which includes the null one. In particular, we obtain uniqueness of the measure of maximal entropy for Viana maps. Section \ref{Pseudo} is devoted to the introduction of what we call pseudo-geometric potentials and the proof of the existence of equilibrium states for them. In section \ref{Conformal} we prove the existence of what we call pseudo-conformal measures for the pseudo-geometric potentials. Section \ref{Distortion} presents a generalization of the pseudo-geometric potentials which we call them bounded distortion potentials.
Finally, section \ref{Examples} contains some examples of maps for which we can apply our theoretical results. We observe that almost all of the examples are closed systems and we provide an example of open system at the end of the section.

\bigskip

\textbf{Acknowledgements:} The author is grateful to Rafael A. Bilbao for his contribution in revising
the manuscript, to Paulo Ribenboim, Krerley Oliveira, José F. Alves, Carlos Matheus, Vilton Pinheiro, Ricardo Bortolotti, Yuri Lima, Davi Lima, Carllos E. Holanda, Ermerson Araujo, Jamerson Bezerra, Mauricio Poletti and Wagner Ranter for fruitful conversations and to the anonymous referees for their valuable comments which contributed to correct and improve the manuscript.

\section{Setup and Main Results}\label{Setup}

In this section we give some definitions and state our main results. 

\subsection{Zooming Sets and Measures}

For differentiable dynamical systems, hyperbolic times are a powerful tool to obtain a type of expansion in the context of non-uniform expansion. As we can find in \cite{AOS}, it can be generalized for systems considered in a metric space, also with exponential contractions. The zooming times generalizes it beyond the exponential
context. Details can be seen in \cite{Pi1}.

Let $f : M \to M$ be a measurable map defined on a connected, compact, separable metric space $M$.

\begin{definition}
(Zooming contractions). A \textbf{\textit{zooming contraction}} is a sequence of functions $\alpha_{n}: [0,+\infty) \to [0,+\infty)$ such that

\begin{itemize}
 \item $\alpha_{n}(r) < r, \text{for all} \, \, n \in \mathbb{N}, \text{for all} \, \, r>0.$
 
 \item $\alpha_{n}(r)<\alpha_{n}(s), \,\, if \,\, 0<r<s, \text{for all} \, \, n \in \mathbb{N}$.
 
 \item $\alpha_{m} \circ \alpha_{n}(r) \leq \alpha_{m+n}(r), \text{for all} \, \, r>0, \text{for all} \, \, m,n \in \mathbb{N}$.
  
 \item $\displaystyle \sup_{r \in (0,1)} \sum_{n=1}^{\infty}\alpha_{n}(r) < \infty$.
\end{itemize}

\end{definition}

As defined in \cite{PV}, we call the contraction $(\alpha_{n})_{n}$ \textbf{\textit{exponential}} if $\alpha_{n}(r) = e^{-\lambda n} r$ for some $\lambda > 0$ and \textbf{\textit{Lipschitz}} if $\alpha_{n}(r) = a_{n} r$ with $0 \leq a_{n} < 1, a_{m}a_{n} \leq a_{m+n}$ and $\sum_{n=1}^{\infty} a_{n} < \infty$. In particular, every exponential contraction is Lipschitz. We can also have the example with $a_{n} = (n+b)^{-a}, a > 1, b>0$.

\begin{definition}
(Zooming times). Let $(\alpha_{n})_{n}$ be a zooming contraction and $\delta>0$. We say that $n \in \mathbb{N}$ is an $(\alpha,\delta)$\textbf{\textit{-zooming time}} for $p \in M$ if there exists 
a neighbourhood $V_{n}(p)$ of $p$ such that

\begin{itemize}
\item $f^{n}$ sends $\overline{V_{n}(p)}$ homeomorphically onto $\overline{B_{\delta}(f^{n}(p))}$;

\item $d(f^{j}(x),f^{j}(y)) \leq \alpha_{n - j}(d(f^{n}(x),f^{n}(y)))$ for every $x,y \in V_{n}(p)$ and every $0 \leq j < n$.
\end{itemize}

We call $B_{\delta}(f^{n}(p))$ a \textbf{\textit{zooming ball}} and $V_{n}(p)$ a  \textbf{\textit{zooming pre-ball}}.
\end{definition}

We denote by $Z_{n}(\alpha,\delta,f)$ the set of points in $M$ for which $n$ is an $(\alpha, \delta)$- zooming time.

\begin{definition}
(Zooming measure) A $f$-non-singular finite measure $\mu$ defined on the Borel sets of M is called a \textbf{\textit{weak zooming measure}} if $\mu$ almost every point has 
infinitely many $(\alpha, \delta)$-zooming times. A weak zooming measure is called a \textbf{\textit{zooming measure}} if
\end{definition}
\begin{equation}\label{frequency}
\displaystyle \limsup_{n \to \infty} \frac{1}{n} \{1 \leq j \leq n \mid x \in Z_{j}(\alpha,\delta,f)\} > 0,
\end{equation}
$\mu$ almost every $x \in M$.

\begin{definition}
(Zooming set) A forward invariant set $\Lambda \subset M$ (that is, $T(\Lambda) \subset \Lambda$) is called a \textbf{\textit{zooming set}} if the above inequality \ref{frequency} holds for every $x \in \Lambda$. 
\end{definition}
\begin{remark}\label{fixed}
We stress that measure $\mu$ is not necessarily ergodic and the definition of the zooming set $\Lambda$ depends on the contraction $\alpha$ and the parameter $\delta$. Moreover, every forward invariant subset $\Lambda_{0} \subset \Lambda$ ($f(\Lambda_{0}) \subset \Lambda_{0}$) is also a zooming set if $\Lambda_{0}$ is full measure for $\mu$. We then need to previously fix a zooming set of reference $\Lambda$, which depends both on $\alpha$ and $\delta$.
\end{remark}

\begin{definition}
(Bounded distortion) Given a measure $\mu$ with a Jacobian $J_{\mu}f$, we say that the measure has \textbf{\textit{bounded distortion}} if there exists $\rho > 0$ such that
\[
\bigg{|}\log \frac{J_{\mu}f^{n}(y)}{J_{\mu}f^{n}(z)} \bigg{|} \leq \rho d(f^{n}(y),f^{n}(z)),
\]
for every $y,z \in V_{n}(x)$, $\mu$-almost everywhere $x \in M$, for every zooming time $n$ of $x$.
\end{definition}

The map $f$ with an associated zooming measure with bounded distortion is called a \textbf{\textit{zooming system}}. We then denote a zooming system by $(f,M,\mu,\Lambda)$, where $\Lambda$ is previously fixed, as observed in Remark \ref{fixed}.

The next result for closed maps assures that every ergodic expanding/zooming measure can be lifted to some inducing scheme which is constructed by V. Pinheiro from hyperbolic times.

For the next result we use the notation defined in the section \ref{MarkovMap}, where $F_{i}$ are Markov maps and $\mathcal{P}_{i}$ are partitions. We will state the result in the form which we will use in this work. 

\begin{theorem}[Pinheiro \cite{Pi1},Theorems 1 and D] \label{Pinheiro}
In the setup defined above, that is, a map endowed with a zooming set, there exists a finite Markov structure 
\[
(F_{1},\mathcal{P}_{1}),\dots,(F_{s},\mathcal{P}_{s}),
\] 
such that the probabilities are liftable with uniformly bounded integral of inducing time.
\end{theorem}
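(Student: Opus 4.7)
The plan is to use zooming times to build ``nice'' backward branches, then carve out finitely many Markov partitions inside a well-chosen covering of the zooming set. Denote the zooming set by $\Lambda$. First, I would fix a radius $\delta > 0$ small enough that at every zooming time $n$ of every point $x$ the connected component $V_n(x)$ of $f^{-n}(B(f^n(x),\delta))$ containing $x$ is mapped homeomorphically by $f^n$ onto $B(f^n(x),\delta)$ with the contraction prescribed by the definition of zooming time. By compactness, choose finitely many $p_1,\dots,p_s \in \Lambda$ whose balls $B_j := B(p_j, \delta/2)$ cover $\Lambda$.

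For each $j$ I would construct $(F_j, \mathcal{P}_j)$ as an induced full Markov map on $B_j$: declare a domain $P$ with inducing time $R\equiv n$ for every point $x \in B_j$ admitting a zooming time $n$ with $f^n(x) \in B_j$, using the zooming pre-ball $V_n(x)$ intersected with $B_j$. The full-image property $F_j(P) \supset B_j$ follows because $V_n(x)$ is mapped onto $B(f^n(x),\delta)$, which contains $B_j$. The delicate point is to extract from this family an essentially disjoint Markov partition; following Pinheiro's procedure, one retains only the \emph{first} admissible return time at each point, and the backward contraction forces any two retained pre-balls to be either disjoint or nested with compatible orbit structure, so the nested cases can be reorganized into a clean Markov partition preserving the full-image property.

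Liftability of any ergodic expanding (zooming) probability $\mu$ then follows because $\mu$-a.e.\ point has positive frequency of zooming times, and a pigeonhole on the finite cover $\{B_j\}$ gives some $j$ for which a set of positive $\mu$-measure returns to $B_j$ at zooming times infinitely often. A Kac-type argument then produces the lift $\bar\mu$ to $(F_j, \mathcal{P}_j)$. The uniform bound on $\int R\, d\bar\mu$ comes from combining the lower bound on the density of zooming times (built into the definition of the zooming set) with the uniform lower bound on the $\mu$-mass of at least one $B_j$, yielding a constant independent of the chosen $\mu$.

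The main obstacle is the Markov clean-up in the second paragraph: organizing the family of zooming pre-balls into a partition whose elements are pairwise essentially disjoint, whose $F_j$-images cover $B_j$, and which is still large enough to host the lift of every expanding measure. The dynamical properties of zooming times, especially backward contraction and the bounded distortion they yield, exist precisely to make this combinatorial step go through.
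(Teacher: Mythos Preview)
This theorem is cited from \cite{Pi1} rather than proved in the paper, but the paper reproduces enough of Pinheiro's machinery (Sections~3 and~4) to see where your sketch diverges from the actual argument.

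The gap is in your second paragraph. You build the induced map directly on the raw balls $B_j = B(p_j,\delta/2)$ and then assert that ``backward contraction forces any two retained pre-balls to be either disjoint or nested.'' This is precisely the conclusion one wants, but it is \emph{not} a consequence of backward contraction alone: two zooming pre-images of a ball can be linked (each meets the other's complement) even when both are small. Pinheiro's remedy is to replace each ball $B_j$ by a smaller open set $B_j'$ that is $\mathcal{E}_{\mathcal{Z}}$-\emph{nested}, meaning $B_j'$ is not linked with any of its own zooming pre-images. This is achieved via the $A^{\star}$ construction: one removes from $B_j$ the closure of the union of all chains of linked pre-images touching $\partial B_j$, and takes a connected component of what remains. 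The summability of the zooming contractions $\sum_n \alpha_n(r) < \infty$ is exactly what guarantees that these chains have small total diameter, so that $B_j^{\star}$ still contains a definite sub-ball (Lemma~\ref{nestcoll} and Lemma~\ref{zoomnestcoll}). Only after this carving does the ``disjoint or nested'' dichotomy for pre-images hold (Lemma~\ref{Main}), and only then does the first-$\mathcal{E}_{\mathcal{Z}}$-return map yield a genuine full Markov partition.

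Your liftability paragraph is closer in spirit, though the actual argument (Theorem~D in \cite{Pi1}) is more delicate than a bare Kac estimate: one shows that for each ergodic zooming measure $\mu$ there is a compact invariant set $A_\mu$ (an $\omega$-limit along zooming times) meeting some nested ball, and liftability with integrable return time is deduced from positive frequency of zooming returns to that ball. The uniform bound on $\int R\, d\bar\mu$ comes from the uniform lower bound on frequency, not from a mass lower bound on $B_j$.
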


Our first main result extends the previous theorem for Markov structures adapted to holes of a special type. See Definition \ref{hole} to clarify what we mean for hole and open system.

\begin{theorema}
\label{A}
Let $(f,M,\mu,\Lambda)$ be a zooming system. Let $r_{0} > 0$ and let $\mathcal{A} = \{B_{r}(p_{i}) \mid i=1,2,\dots,k , r < r_{0}\}$ be a finite open cover of $M$ such that $B_{r}(p_{i}) \cap B_{r/2}(p_{j}) = \emptyset, \text{for all} \, \, i,j \leq k$. We assume that $H \subset M$ satisfies either $H = \emptyset$ or $H$ is a special type of open set chosen such that   
\[
\displaystyle \bigcup_{j=1}^{t} B_{\frac{r_{k_{j}}}{2}}(p_{k_{j}}) \subset H \subset \bigcup_{j=1}^{t} B_{r_{k_{j}}}(p_{k_{j}}),
\]
for some $k_{1},\dots,k_{t} \leq k$. Then if $r_{0}$ is sufficiently small, depending only on $(f,M,\mu,\Lambda)$, there exists a finite Markov structure adapted to the hole $H$.
\end{theorema}

\begin{remark}
The case where $H = \emptyset$ means that the system is actually closed and it reduces to Theorem \ref{Pinheiro}. To extend it, we consider a nested collection instead of a unique nested set. The adaptation here is also an extension to the context of metric spaces of a result in \cite{DT}, where they construct what they call an inducing scheme respecting the hole. In section \ref*{components} we show how $H$ is constructed and we clarify why it is a special type. We still observe that the construction of the Markov structure is made exactly along the same line as in \cite{Pi1} work and the choice of the hole is made by observing what are the open sets for which the Markov structure is adapted.
\end{remark}

\bigskip

\subsection{Pressure, zooming potentials and equilibrium states} For measurable maps we recall the definition of an equilibrium state, given in the Introduction. Given a measurable map $f: M \to M$ on a compact metric space $M$ for which the set $\mathcal{M}_{f}(M)$ of $f$-invariant measures is non-empty (for example, the continuous maps) and a measurable potential $\phi : M \to \mathbb{R}$, we define the \textbf{\textit{pressure}} $P_{f}(\phi)$ as in equation \ref{pressure} and an  \textbf{\textit{equilibrium state}} is an invariant measure that attains the supremum as in equation \ref{equilibrium}.

Denote by $\mathcal{Z}(\Lambda)$ the set of invariant zooming measures supported on $\Lambda$ (in [\cite{Pi1}, Theorem C] it is proved that this set is nonempty under the same hypothesis as ours for the map, that is, a zooming measure with bounded distortion). 

We define a \textbf{\textit{zooming potential}} as a measurable potential $\phi:M \to \mathbb{R}$ such that
\[
\displaystyle  \sup_{\eta \in \mathcal{Z}(\Lambda)^{c}} \bigg{\{} h_{\eta}(f) + \int \phi d\eta \bigg{\}} < \sup_{\eta \in \mathcal{Z}(\Lambda)} \bigg{\{} h_{\eta}(f) + \int \phi d\eta \bigg{\}}.
\]
It is analogous to the definition of expanding potential that can be found in \cite{PV}.

Denote by $h(f)$ the pressure of the potential $\phi \equiv 0$ (or topological entropy), which we call simply \textbf{\textit{entropy}} of $f$, that is,
\[
\displaystyle h(f) := P_{f}(0)  = \sup_{\eta \in \mathcal{M}_{f}(M)} \bigg{\{} h_{\eta}(f) \bigg{\}}.
\]

\begin{example}\label{Base}
	As an example of a zooming potential, if we assume that a potential $\phi_{0} : M \to \mathbb{R}$ is such that
	\[
	\displaystyle  \sup_{\eta \in \mathcal{Z}(\Lambda)^{c}} \bigg{\{} \int \phi_{0} d\eta \bigg{\}} < \sup_{\eta \in \mathcal{Z}(\Lambda)} \bigg{\{} \int \phi_{0} d\eta \bigg{\}},
	\]
	there exists $t_{0} > 0$ such that
	\[
	h(f) < \sup_{\eta \in \mathcal{Z}(\Lambda)} \bigg{\{} \int t_{0}\phi_{0} d\eta \bigg{\}} - \sup_{\eta \in \mathcal{Z}(\Lambda)^{c}} \bigg{\{} \int t_{0}\phi_{0} d\eta \bigg{\}}.
	\]
	By taking $\phi:=t_{0}\phi_{0}$, we obtain
	\[
	\displaystyle  \sup_{\eta \in \mathcal{Z}(\Lambda)^{c}} \bigg{\{}h_{\eta}(f) + \int \phi d\eta \bigg{\}} \leq h(f) + \sup_{\eta \in \mathcal{Z}(\Lambda)^{c}} \bigg{\{} \int \phi d\eta \bigg{\}}< \sup_{\eta \in \mathcal{Z}(\Lambda)} \bigg{\{} \int \phi d\eta \bigg{\}} \leq \sup_{\eta \in \mathcal{Z}(\Lambda)} \bigg{\{}h_{\eta}(f) + \int \phi d\eta \bigg{\}}.
	\]
\end{example}

\begin{example} \label{Dirac}
	Let us suppose that there exists a fixed point $x_{0} \in \Lambda$ and a potential $\phi_{0} : M \to \mathbb{R}$ such that $\phi_{0}(x) < \phi_{0}(x_{0})$ for all $x \in M \backslash \{x_{0}\}$. It is clear that the Dirac probability $\delta_{x_{0}}$ supported on $x_{0}$ is a zooming measure. Moreover, if we assume that
	\[
	\displaystyle  \sup_{\eta \in \mathcal{Z}(\Lambda)^{c}} \bigg{\{} \int \phi_{0} d\eta \bigg{\}} < \int \phi_{0} d \delta_{x_{0}} = \sup_{\eta \in \mathcal{Z}(\Lambda)} \bigg{\{} \int \phi_{0} d\eta \bigg{\}},
	\]
	then, by the previous example, we can find $t_{0} > 0$ such that $\phi:= t_{0}\phi_{0}$ is a zooming potential.
\end{example}

\begin{example}\label{Periodic}
	To generalize the previous example, let us suppose that there exists a periodic point $x_{0} \in \Lambda$ with period $k \geq 1$ and a potential $\phi_{0} : M \to \mathbb{R}$ such that $\phi_{0}(x) < \phi_{0}(f^{i}(x_{0})), i=0,1,\dots,k-1$ for all $x \in M \backslash \{f^{i}(x_{0}) \mid i=0,1,\dots,k-1\}$. It is clear that the average of Dirac probabilities $\mu_{0}:=(1/k)\sum_{i=0}^{k-1}\delta_{f^{i}(x_{0})}$ supported on the orbit of $x_{0}$ is a zooming measure. Moreover, if we assume that
	\[
	\displaystyle  \sup_{\eta \in \mathcal{Z}(\Lambda)^{c}} \bigg{\{} \int \phi_{0} d\eta \bigg{\}} < \int \phi_{0} d \mu_{0} = \sup_{\eta \in \mathcal{Z}(\Lambda)} \bigg{\{} \int \phi_{0} d\eta \bigg{\}},
	\]
	then, we can find $t_{0} > 0$ such that $\phi:= t_{0}\phi_{0}$ is a zooming potential.
\end{example}

\begin{remark}
	We observe that in the previous examples we have for every invariant probability $\mu$
	\[
	\int \phi_{0} d\mu \leq \int \phi_{0} d\delta_{x_{0}} \,\, \text{and} \,\, \int \phi_{0} d\mu \leq \int \phi_{0} d\mu_{0}, \,\, \text{respectively}.
	\]
	What we assume is that the supremum of the integrals over all non zooming measures is strictly less than the integral with respect to the probabilities $\delta_{x_{0}}$ and $\mu_{0}$, respectively.
\end{remark}

For open systems, we consider another type of pressure and equilibrium states as follows.

\begin{definition}(Open pressure and open equilibrium states)
   Given an open system $(f,M,H)$ with a hole $H$ (see Definition \ref{hole}), we define the \textbf{\textit{open pressure}} as
   \begin{equation}\label{open pressure}
   	\displaystyle P_{f,H}(\phi) : = \sup_{\eta \in \mathcal{M}_{f}(M,H)} \bigg{\{} h_{\eta}(f) + \int \phi d\eta \bigg{\}},
   \end{equation}
    where $\mathcal{M}_{f}(M,H)$ is the set of $f$-invariant measures $\eta$ such that $\eta(H) = 0$. Moreover, we define an \textbf{\textit{open equilibrium state}} as an invariant measure $\mu \in \mathcal{M}_{f}(M,H)$ such that
    \begin{equation}\label{open equilibrium}
    \displaystyle h_{\mu}(f) + \int \phi d\mu = \sup_{\eta \in \mathcal{M}_{f}(M,H)} \bigg{\{} h_{\eta}(f) + \int \phi d\eta \bigg{\}},
    \end{equation} 	
\end{definition}

\begin{remark}
	We readily have that $P_{f,H}(\phi) \leq P_{f}(\phi)$. Also, in the case where the system is closed, that is, where $H = \emptyset$, the open pressure reduces to the pressure and the open equilibrium state reduces to the equilibrium state, both previously defined.
\end{remark}

\begin{remark}
	The definitions \ref{open pressure} and \ref{open equilibrium} are more appropriate for the open systems because they focus on measures which give full mass to the survivor set $M^{\infty}$ (see Definition \ref{survivor}). In fact, if a measure gives full mass to the survivor set $M^{\infty}$, then it gives null mass to the hole $H$. Also, since the survivor set $M^{\infty}$ is invariant, if an $f$-invariant measure gives null mass to the hole $H$, then it gives full mass to the survivor set $M^{\infty}$. The study of equilibrium states is now concentrated at the survivor set $M^{\infty}$.
\end{remark}

\begin{definition}
(Backward separated map) We say that a map $f:M \to M$ is \textbf{\textit{backward separated}} if for every finite set $F \subset M$ we have
\[
\displaystyle d\Bigg{(}F, \bigcup_{j=1}^{n} f^{-j}(F) \backslash F \Bigg{)} > 0, \text{for all} \, \,\, n \geq 1.
\]
\end{definition}
Observe that if $f$ is such that $\sup\{\# f^{-1} (x) \mid x \in M \}< \infty$, then $f$ is backward separated.

Denote by $\omega_{f,+}(z)$ the set of accumulation points of the orbit $\{f^{n}(z)\}_{n}$ at zooming times.

In order to make a zooming system open, we consider the hole obtained by Theorem \ref{A}. Now, we state our main result on equilibrium states. With a quite general setup, we obtain existence and uniqueness with no requirement of transitivity!

\begin{theorema}
\label{B}
Given a measurable open zooming system $f:M \to M$ which is backward separated, if the contraction $(\alpha_{n})_{n}$ satisfies $\alpha_{n}(r) \leq ar$ for some $a \in (0,1)$, every $n \in \mathbb{N}$ and every $r \in [0,+\infty)$ (Lipschitz, for example), with zooming set  $\Lambda$ (previously fixed) and hole $H$ given by Theorem \ref{A}. 

\begin{itemize}

\item If $\phi:M \to \mathbb{R}$ is zooming potential with  finite pressure $P_{f}(\phi)$ and locally Hölder induced potential ($\phi$ Hölder, for example), then there are \textbf{finitely many}  ergodic equilibrium states and they are zooming measures.  

\item If the zooming set $\Lambda$ is not dense in $M$, we can choose the hole such that $\Lambda \cap H = \emptyset$ and obtain the equilibrium states giving full mass to the survivor set $M^{\infty}$. Then, in this case, the equilibrium states are open.

\item Afterwards, we establish \textbf{uniqueness} of equilibrium state if there exist infinitely many ergodic zooming measures whose supports are pairwise disjoints, with no requirement of transitivity.
\end{itemize}
\end{theorema}

\begin{remark}\label{remark} We observe that
\begin{itemize}	
\item Theorem \ref{B} includes a similiar result that can be seen in \cite{AOS} and the proof is along the same lines. Later on, we will see that hyperbolic potentials are equivalent to continuous zooming potentials and we can use Theorem \ref{A} to obtain the same result for open non-uniformly expanding maps. Theorem \ref{B} reduces to closed system if $H = \emptyset$. Theorem \ref{B} is also similar to some of the main results of \cite{PV}. The main difference is the technique and we also include zooming systems with nonexponential Lipschitz contraction.

\item The coding of the system by using the Markov structure and considering the system as closed gives finitely many ergodic equilibrium states which are zooming measures. Since the Markov structure is adapted to the hole $H$, when the zooming set $\Lambda$ is disjoint from $H$, we also obtain the property that the equilibrium states are supported on the survivor set $M^{\infty}$, since the measures are zooming (giving full mass to $\Lambda$ and, so, to $M^{\infty}$). As a consequence, they give null mass to the hole $H$ and we have $P_{f,H}(\phi) = P_{f}(\phi)$, which implies that the equilibrium states are open equilibrium states when the system is open and $\Lambda \cap H = \emptyset$. 

\item For the Lipschitz contraction $(\alpha_{n})_{n}(r) = a_{n}r$ we have $\sum_{n=1}^{\infty} a_{n} < \infty$ and so $a_{n} \to 0$. Then, we can take $a = \max\{a_{n}\}$.

\item If $\Lambda$ is dense in $M$ and there exists a not dense forward invariant subset $\Lambda_{0} \subset \Lambda$ such that $\mu(\Lambda \backslash \Lambda_{0}) = 0$, we can consider the open system with respect to the zooming set $\Lambda_{0}$.

\item To obtain the uniqueness, we assume that there exist infinitely many ergodic zooming measures to take ergodic zooming measures which are not equilibrium states. It is possible because we have finiteness of equilibrium states. 
\end{itemize}
\end{remark}

\subsection{Pseudo-conformal measures and pseudo-geometric potentials}
Given a measure $\mu$ on $M$, its \textbf{\textit{Jacobian}} is a function $J_{\mu}f : M \to [0,+ \infty)$ such that 
\[
\mu(f(A)) = \int_{A} J_{\mu}f d\mu
\] 
for every $A$ \textbf{\textit{domain of injectivity}}, that is, a measurable set such that $f(A)$ is measurable and $f_{A}: A \to f(A)$ is a bijection.

The class of conformal measures is among the main measures we can choose in order to study the thermodynamic formalism of a given dynamical system $f:M \to M$. They are measures $\eta$ with a Jacobian $J_{\eta}f$ of the type $J_{\eta}f = e^{-\phi}$, where $\phi: M \to \mathbb{R}$ is a potential. It means that 
\[
\eta(f(A)) = \int_{A} e^{-\phi} d\eta,
\]
for every measurable domain of injectivity $A \subset M$. The potentials we will consider are the so-called \textbf{\textit{pseudo-geometric potentials}}, defined as follows
\[
\phi_{t}(x) =
\begin{cases}
 -t \log J_{\mu}f(x), & \text{if }   J_{\mu}f(x) \neq 0;\\
 0 , & \text{if }  J_{\mu}f(x) = 0.\\
\end{cases}
\]

We require that the Jacobian $J_{\mu}f$ is bounded above and the set of the points $x \in M$ where $J_{\mu}f(x) = 0$ has zero measure $\mu$. It means that
\[
\int e^{-\phi_{t}} d\mu = \int_{J_{\mu}f(x) \neq 0} e^{-\phi_{t}} d\mu =  \int_{J_{\mu}f(x) \neq 0} e^{t \log J_{\mu}f(x)} d\mu = \int (J_{\mu}f)^{t} d\mu
\]
and we call the measure $\mu$ \textbf{\textit{pseudo-conformal}}.

For the zooming reference measure $\mu$, Pinheiro showed in [\cite{Pi1}, Theorem C] that there are finitely many ergodic absolutely continuous measure with respect to $\mu$. We denote by $\mathbb{A}$ the set of such measures. Fix $\mu_{0} \in \mathbb{A}$ we know that $J_{\mu}f$ is also a Jacobian for $\mu_{0}$. From the definition of Jacobian, since $\mu_{0}$ is an invariant measure, it follows that $J_{\mu}f(x) \geq 1$, $\mu_{0} \,\, \text{a.e.} \,\, x \in M$. If fact, we have for domain of injectivity $A$
\[
\mu_{0}(A) \leq  \mu_{0}(f(A))=  \int_{A} J_{\mu}f d\mu_{0} \implies J_{\mu}f(x) \geq 1, \mu_{0} \,\, \text{a.e.} \,\, x \in A. 
\]
Since the zooming set $\Lambda$ has full measure $\mu_{0}$ and every zooming pre-ball is a domain of injectivity, we can cover a set of full measure with domains of injectivity. We conclude that $J_{\mu} f(x) \geq 1$ a.e. $x \in M$. We cannot have $J_{\mu} f(x) \equiv 1$ a.e. $x \in M$. So,
\[
\displaystyle \int \log J_{\mu} f d\mu_{0} > 0.
\]
We assume that $h(f) < \infty$ and let
\[
\displaystyle t_{0} : = \max_{\mu_{0} \in \mathbb{A}} \bigg{\{} \frac{h(f)}{-\int \log J_{\mu} f d\mu_{0}} \bigg{\}} \leq 0.
\]
The pressure $P_{f}(\phi_{t})$ is finite in our setting, since the potential $\phi_{t}$ is bounded above.

\begin{definition}
	We say that a map $f:M \to M$ is \textbf{\textit{topologically exact}} if for every open set $V \subset M$ there exists $k \in \mathbb{N}$ such that $f^{k}(V) = M$. 	
\end{definition}

\begin{theorema}
\label{C}
Given a measurable open zooming system $f:M \to M$ which is backward separated, if the contraction $(\alpha_{n})_{n}$ satisfies $\alpha_{n}(r) \leq ar$ for some $a \in (0,1)$, every $n \in \mathbb{N}$ and every $r \in [0,+\infty)$ (Lipschitz, for example), with zooming set  $\Lambda$ (previously fixed) and hole $H$ given by Theorem \ref{A}. 

\begin{itemize}
	
	\item for $t < t_{0}$ the potential $\phi_{t} = -t \log J_{\mu}f$ is zooming and the induced potential is locally Hölder. There are \textbf{finitely many}  ergodic equilibrium states and they are zooming measures. 
	
	\item If the zooming set $\Lambda$ is not dense in $M$, we can choose the hole such that $\Lambda \cap H = \emptyset$ and obtain the equilibrium states giving full mass to the survivor set $M^{\infty}$. Then, in this case, the equilibrium states are open.
	
	\item Afterwards, we establish \textbf{uniqueness} of equilibrium state if there exist infinitely many ergodic zooming measures whose supports are pairwise disjoints, with no requirement of transitivity.
		
	\item if the map is topologically exact, there exists a pseudo-conformal measure for the potential $\phi_{t} - P_{f}(\phi_{t})$;
\end{itemize} 
\end{theorema}

\begin{remark}
	Here it holds the same explanation as Remark \ref{remark} for Theorem \ref{B}. Moreover, we observe that a similar result can be found in \cite{IT1} for conformal measures and geometric potentials in the context of one-dimensional dynamics. We extend it to the context of metric spaces and open systems, with the proof along the same lines. The main ingredient here is the Markov structure obtained in our Theorem \ref{A} and the theory of O. Sarig for symbolic dynamics.
\end{remark}

Now, we define Markov Structure and what we mean for it to be adapted to a hole.

\subsection{Markov Maps and Markov Structures} \label{MarkovMap}

We recall the definitions of Markov partition, Markov map and induced Markov map (See \cite{Pi1} for details). 

Consider a measurable map $f:M \to M$ defined on the metric space $M$ endowed with the Borel $\sigma$-algebra.

\begin{definition}
(Markov partition). Let $f:U \to U$ be a measurable map defined on a Borel set $U$ of a compact, separable metric space $M$. 
A countable collection $\mathcal{P} = \{P_{1}, P_{2}, P_{3}, \dots\}$ of Borel subsets of $U$ is called a \textbf{\textit{Markov partition}} if
\begin{enumerate}
  \item[(1)]$int(P_{i}) \cap int(P_{j}) = \emptyset$, if $i \neq j$;
  \item[(2)] $f_{\mid_{P_{i}}}$ is a homeomorphism and it can be extended to a homeomorphism sending $\overline{P_{i}}$ onto $\overline{f(P_{i})}$;
  \item[(3)] if $f(P_{i}) \cap int(P_{j}) \neq \emptyset$ then $f(P_{i}) \supset int(P_{j})$;
  \item[(4)] $\# \{f(P_{i}) \mid i \in \mathbb{N}\} < \infty$;
  \item[(5)] $\displaystyle \lim_{n \to \infty} diam(\mathcal{P}_{n}(x)) = 0, \,\, \text{for all} \, \,\, x \in \cap_{n \geq 0} f^{-n}(\cup_{i}P_{i})$,
\end{enumerate}
where $\mathcal{P}_{n}(x) = \{y \mid \,\, \mathcal{P}(f^{j}(y)) = \mathcal{P}(f^{j}(x)), \,\, \text{for all} \, \,\, 0 \leq j \leq n\}$ and 
$\mathcal{P}(x)$ denotes the element of $\mathcal{P}$ that contains $x$.
\end{definition}

\begin{definition}
(Markov map). The pair $(F,\mathcal{P})$, where $\mathcal{P}$ is a Markov partition of $F:U \to U$, is called a \textbf{\textit{Markov map}} defined on $U$. 
If $F(P) = U, \,\, \text{for all} \, \,\, P \in \mathcal{P}, (F, \mathcal{P})$ is called a \textbf{\textit{full Markov map}}.
\end{definition}

Note that if $(F,\mathcal{P})$ is a full Markov map defined on an open set $U$ then the elements of $\mathcal{P}$ are open sets 
(because $F(P)=U$ and $F\mid_{P}$ is a homeomorphism, $\text{for all} \, \,\, P \in \mathcal{P}$).

\begin{definition}
(Induced Markov map). A Markov map $(F,\mathcal{P})$ defined on $U$ is called an \textbf{\textit{induced Markov map}} for $f$ on $U$ if there exists a function 
$R:U \to \mathbb{N}=\{0,1,2,3,\dots\}$(called \textbf{\textit{inducing time}}) such that $\{R \geq 1\} = \cup_{P \in \mathcal{P}} P$, $R\mid_{P}$ is constant
$\text{for all} \, \, P \in \mathcal{P}$ and $F(x)=f^{R(x)}(x), \text{for all} \, \,\, x \in U$. We also denote  $R_{k}:=R(P_{k})$.
\end{definition}

If an induced Markov map $(F,\mathcal{P})$ is a full Markov map, we call $(F,\mathcal{P})$ an \textbf{\textit{induced full Markov map}}. 
We will also call the triple $(F,\mathcal{P})$ an \textbf{\textit{inducing scheme}}.

\subsection{Open Dynamics}

For the classical dynamical systems the phase spaces are invariant and called \textbf{\textit{closed}}. When we consider systems where the phase space is not invariant, they are called \textbf{\textit{open}}. It is done by considering holes in a closed dynamical system.

\begin{definition}\label{hole}
(Hole and open system). Given a dynamical system $f:M \to M$, we say that an open subset $\displaystyle H \neq M$ with finitely many connected components is a \textbf{\textit{hole}} for the system and the system $(f,M,H)$ is  \textbf{\textit{open}}.
\end{definition}

\begin{definition}\label{survivor}
(Survivor set). Given an open dynamical system $f:M \to M\backslash H$ with hole $H$, we say that the set
\[
\displaystyle M^{\infty} : = \bigcap_{i=0}^{\infty} f^{-i}(M \backslash H)
\]
is the \textbf{\textit{survivor set}} of the system. It means the set of point that never pass through the hole.
\end{definition}

\begin{remark}
	We observe that for that case where $\Lambda \cap H = \emptyset$, we have that $\Lambda \subset M^{\infty}$ because the zooming set is invariant. As a consequence, the survivor set $M^{\infty}$ has full measure with respect to any zooming measure.  
\end{remark}

\begin{definition}
(Inducing scheme adapted to the hole). Given a hole $H \subset M$, we say that an inducing scheme is  $(F,\mathcal{P})$ \textbf{\textit{adapted to the hole}} $H$ 
if we have $f^{k}(P) \cap H \neq \emptyset \Rightarrow f^{k}(P) \subset H,\, \text{for all} \, \,\, P \in \mathcal{P}, \text{for all} \, \,\, 0 \leq k < R(P)$. 
\end{definition}

\begin{definition}
(Measure liftable to a inducing scheme). Given an inducing scheme $(F,\mathcal{P})$ and an invariant probability $\mu$, we say that $\mu$ is \emph{liftable} to $(F, \mathcal{P})$ if there exists a measure 
$\overline{\mu}$ on $U$ such that for every measurable set $A \subset M$, 
\[
\mu(A) = \sum_{k=1}^{\infty}
\sum_{j=0}^{R_{k} - 1} \overline{\mu}(f^{-j}(A) \cap P_{k}).
\]
\end{definition}

\begin{definition}
(Markov structure adapted to the hole). A \textbf{\textit{Markov structure}} for a set $\Lambda$ is defined as a collection at most countable of inducing schemes $(F_{i},\mathcal{P}_{i})$ such that every ergodic probability $\mu$ with $\mu(\Lambda)=1$ is liftable to some of them.

Given a hole $H \subset M$, we say that a Markov structure is  $(F,\mathcal{P})$ \textbf{\textit{adapted to the hole}} $H$ 
if inducing scheme is adapted to it. 
\end{definition}

\section{Preliminary Results}\label{Preliminary}

In order to construct an inducing scheme that is adapted to a hole of a special type, we need the notions of \textit{linked sets}, \textit{nested collections} and 
\textit{zooming times}, introduced by Pinheiro in \cite{Pi1}. Zooming times generalize the notion of \textit{hyperbolic times}, which are fundamental for our construction.
The elements of the partition will be regular pre-images of a certain open set and the hole will be obtained from small balls by deleting linked regular pre-images 
of the considered balls. 

\subsection{Nested Collections}

We recall some definitions and results, that will help us to show that the induced map we will build it adapted to a special type of hole. All the statements and proofs can be found in \cite{Pi1}.

\begin{definition}(Linked sets).
We say that two open sets $A$ and $B$ are \textbf{\textit{linked}} if both $A-B$ and $B-A$ are not empty. 
\end{definition}

We introduce the following notation.

\textbf{Notation}: We write $A \leftrightarrow B$ to mean that $A$ and $B$ are linked and $A \not \leftrightarrow B$ to mean that $A$ and $B$ are not linked.

\begin{definition}
(Regular pre-images). Given $V$ an open set, we say that $P$ is a \textbf{\textit{regular pre-image}} of order $n$ of $V$ if $f^{n}$ sends $P$ homeomorphically onto $V$. 
Denote by $ord(P)$ the order of $P$ (with respect to $V$).
\end{definition}

Let us fix a collection $\mathcal{E}_{0}$ of open sets. For each $n$ we consider $\mathcal{E}_{n}(V)$ as the collection of regular pre-images of order $n$ of $V$.
Set $\mathcal{E}_{n} = (\mathcal{E}_{n}(V))_{V \in \mathcal{E}_{0}}$. We call the sequence $\mathcal{E} = (\mathcal{E}_{n})_{n}$ 
a \textbf{\textit{dynamically closed family}} of regular pre-images. We observe that $f^{k}(E) \in \mathcal{E}_{n-k}, \text{for all} \, \,\, E \in \mathcal{E}_{n}, \text{for all} \, \,\, 0 \leq k \leq n$. 

Let $\mathcal{E} = (\mathcal{E}_{n})_{n}$ be a dynamically closed family of regular pre-images. A set $P$ is called an $\mathcal{E}$-pre-image of a set $W$ 
if either there is $n \in \mathbb{N}$ and $Q \in \mathcal{E}_{n}$ such that $\overline{W} \subset f^{n}(Q)$ and $P = f_{|Q}^{-n}(W)$ or $W=V \in \mathcal{E}_{n}$.

\begin{definition}
(Nested sets). An open set $V$ is called $\mathcal{E}$\textbf{\textit{-nested}} if it is not linked with any $\mathcal{E}$-pre-image of it.
\end{definition}

\begin{definition}
(Nested collections). A collection $\mathcal{A}$ of open sets is called an $\mathcal{E}$\textbf{\textit{-nested collection}} of sets if every $A \in \mathcal{A}$ 
is not linked with any $\mathcal{E}$-pre-image of an element of $\mathcal{A}$ with order bigger than zero. Precisely, if $A_{1} \in \mathcal{A}$ 
and $P$ is an $\mathcal{E}$-pre-image of some $A_{2} \in \mathcal{A}$, then either $A_{1} \not \leftrightarrow P$ or $P = A_ {2}$.
\end{definition}

It follows from the definition of an $\mathcal{E}$-nested collection of sets that every sub-collection of an $\mathcal{E}$-nested collection
is also an $\mathcal{E}$-nested collection. In particular, each element of an $\mathcal{E}$-nested collection is an $\mathcal{E}$-nested set.

\begin{lemma}
\label{Main}
(Main property of a nested collection). If $\mathcal{A}$ is an $\mathcal{E}$-nested collection of open sets and $P_{1}$ and $P_{2}$ are $\mathcal{E}$-pre-images
of two elements of $\mathcal{A}$ with $ord(P_{1}) \neq ord(P_{2})$ then $P_{1} \not \leftrightarrow P_{2}$.
\end{lemma}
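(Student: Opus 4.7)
The plan is to reduce to the base case of the $\mathcal{E}$-nested definition by pushing one of the pre-images forward until its order becomes zero, at which point it sits in $\mathcal{A}$ itself and the defining property of an $\mathcal{E}$-nested collection applies directly. I would first relabel if needed so that $n_2 := ord(P_2) < n_1 := ord(P_1)$, and write $P_i = (f^{n_i}|_{Q_i})^{-1}(A_i)$ with $Q_i \in \mathcal{E}_{n_i}$, $A_i \in \mathcal{A}$, and $\overline{A_i} \subset f^{n_i}(Q_i)$. When $n_2 = 0$ the argument is finished on the spot: $P_2 = A_2 \in \mathcal{A}$ and $P_1$ is an $\mathcal{E}$-pre-image of $A_1 \in \mathcal{A}$ of positive order, so the definition of $\mathcal{E}$-nested collection yields $A_2 \not\leftrightarrow P_1$.

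For $n_2 \geq 1$ I would argue by contradiction. Assuming $P_1 \leftrightarrow P_2$, push both sets forward by $f^{n_2}$. Since $\mathcal{E}$ is dynamically closed, $f^{n_2}(Q_1) \in \mathcal{E}_{n_1 - n_2}$; moreover $f^{n_2}|_{Q_1}$ is injective because $f^{n_1}|_{Q_1}$ is a homeomorphism factoring through it. Hence $\widetilde{P}_1 := f^{n_2}(P_1)$ is an $\mathcal{E}$-pre-image of $A_1$ of positive order $n_1 - n_2$, while $f^{n_2}(P_2) = A_2 \in \mathcal{A}$. The nested property applied to the pair $(A_2, \widetilde{P}_1)$ then forces $A_2 \not\leftrightarrow \widetilde{P}_1$, so it will suffice to show that $P_1 \leftrightarrow P_2$ implies $\widetilde{P}_1 \leftrightarrow A_2$ in order to reach a contradiction. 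Nonempty intersection transfers easily, since any $x \in P_1 \cap P_2$ lies in $Q_2$, where $f^{n_2}$ is a homeomorphism, so $f^{n_2}(x) \in \widetilde{P}_1 \cap A_2$. The set-differences $P_1 \setminus P_2$ and $P_2 \setminus P_1$ should then be pushed forward into $\widetilde{P}_1 \setminus A_2$ and $A_2 \setminus \widetilde{P}_1$ respectively, using the injectivity of $f^{n_2}|_{Q_1}$ and of $f^{n_2}|_{Q_2}$.

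The hard part will be the transfer of the set-differences. The obstacle is that $f^{n_2}$ is injective on each of $Q_1, Q_2$ separately but not necessarily on $Q_1 \cup Q_2$, so a point of $P_1 \setminus P_2$ could in principle land inside $A_2 = f^{n_2}(P_2)$ through a branch distinct from the one on $Q_2$, spoiling the argument. I expect to exploit the strict containment $\overline{A_2} \subset f^{n_2}(Q_2)$ built into the definition of an $\mathcal{E}$-pre-image to provide the room needed, then to compare the two local inverse branches of $f^{n_2}$ over $Q_1$ and $Q_2$; this is where the full $\mathcal{E}$-pre-image structure (as opposed to a naive regular pre-image) should really come into play.
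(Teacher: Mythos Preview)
The paper does not prove this lemma itself; it is quoted from Pinheiro \cite{Pi1}, where the proof appears. Your overall strategy---push forward by the smaller order $n_2$, land $P_2$ on $A_2\in\mathcal{A}$, recognise $\widetilde P_1=f^{n_2}(P_1)$ as an $\mathcal{E}$-pre-image of $A_1$ of positive order $n_1-n_2$, and invoke the defining property of an $\mathcal{E}$-nested collection---is exactly the standard one, and you have correctly isolated the only genuine obstacle: the two inverse branches of $f^{n_2}$ over $Q_1$ and $Q_2$ need not be globally compatible.

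There is a real gap in how you propose to resolve that obstacle. The strict containment $\overline{A_2}\subset f^{n_2}(Q_2)$ is not the engine of the argument; it merely guarantees that the branch $(f^{n_2}|_{Q_2})^{-1}$ is defined on all of $A_2$. The key ingredient you do not name is \emph{connectedness} of the elements of $\mathcal{A}$, which is assumed throughout in \cite{Pi1} and in every application in this paper (where the $A_i$ are balls). Rather than trying to push forward the set-differences, argue the contrapositive: from $A_2\not\leftrightarrow\widetilde P_1$ and $A_2\cap\widetilde P_1\ni f^{n_2}(x)\neq\emptyset$ you get either $\widetilde P_1\subset A_2$ or $A_2\subset\widetilde P_1$. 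In the first case the set
\[
\{\,y\in\widetilde P_1:\ (f^{n_2}|_{Q_1})^{-1}(y)=(f^{n_2}|_{Q_2})^{-1}(y)\,\}
\]
is nonempty (it contains $f^{n_2}(x)$), closed in $\widetilde P_1$ by continuity, and open because a common value lies in $Q_1\cap Q_2$, where the two homeomorphisms coincide locally. Since $\widetilde P_1$ is connected (homeomorphic to $A_1$), the branches agree on all of $\widetilde P_1$, giving $P_1=(f^{n_2}|_{Q_2})^{-1}(\widetilde P_1)\subset(f^{n_2}|_{Q_2})^{-1}(A_2)=P_2$, which contradicts $P_1\leftrightarrow P_2$. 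The case $A_2\subset\widetilde P_1$ is symmetric, using connectedness of $A_2$. Your instinct to ``compare the two local inverse branches'' is right; you just need to make connectedness explicit and run this clopen argument, instead of expecting the strict containment to supply the missing injectivity.
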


\begin{corollary}
(Main property of a nested set). If $V$ is an $\mathcal{E}$-nested set and $P_{1}$ and $P_{2}$ are $\mathcal{E}$-pre-images of $V$ then 
$P_{1} \not \leftrightarrow  P_{2}$. Furthermore,

\begin{enumerate}
  \item[(1)] if $P_{1} \cap P_{2} \neq \emptyset$ then $ord(P_{1}) \neq ord(P_{2})$;
  
  \item[(2)] if $\displaystyle P_{1} \subset_{\neq} P_{2}$ with $ord(P_{1}) < ord(P_{2})$ then $V$ is contained in an $\mathcal{E}$-pre-image of itself 
  with order bigger than zero, $f^{ord(P_{2}) - ord(P_{1})}(V) \subset V$.
\end{enumerate}
\end{corollary}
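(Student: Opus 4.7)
The plan is to deduce the corollary by applying Lemma \ref{Main} to the singleton collection $\mathcal{A} = \{V\}$, which is $\mathcal{E}$-nested precisely because $V$ is. This immediately yields the non-linkedness conclusion whenever $\operatorname{ord}(P_1)\neq\operatorname{ord}(P_2)$. The remaining case $\operatorname{ord}(P_1)=\operatorname{ord}(P_2)=n$ has to be handled by hand: since $f^{n}$ is a homeomorphism from each $P_i$ onto $V$, any point of $P_1\cap P_2$ admits a unique local inverse of $f^n$ landing in $V$, so $P_1$ and $P_2$ must coincide on an open-and-closed (inside each) subset, hence either agree or are disjoint; in particular they are not linked. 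Taken together, this gives $P_1 \not\leftrightarrow P_2$ in all cases.

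For claim (1), I would argue by contrapositive. Assume $\operatorname{ord}(P_1)=\operatorname{ord}(P_2)=n$. Since $P_1\not\leftrightarrow P_2$, either $P_1\cap P_2=\emptyset$ or one set is contained in the other. In the containment case, say $P_1\subseteq P_2$, applying the homeomorphism $f^n|_{P_2}$ gives $V=f^n(P_1)\subseteq f^n(P_2)=V$, so injectivity of $f^n$ on $P_2$ forces $P_1=P_2$. In either subcase, if $P_1\cap P_2\neq\emptyset$ then $P_1=P_2$, so (viewing $P_1,P_2$ as genuinely distinct pre-images) one must have $\operatorname{ord}(P_1)\neq\operatorname{ord}(P_2)$.

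For claim (2), let $n_1=\operatorname{ord}(P_1)<\operatorname{ord}(P_2)=n_2$ and set $W := f^{n_1}(P_2)$. Since $\mathcal{E}$ is dynamically closed, $W\in\mathcal{E}_{n_2-n_1}$, and $f^{n_2-n_1}(W)=f^{n_2}(P_2)=V$, so $W$ is an $\mathcal{E}$-pre-image of $V$ of order $n_2-n_1>0$. From $P_1\subsetneq P_2$, applying $f^{n_1}$ (which is a homeomorphism on $P_2$) gives
\[
V \;=\; f^{n_1}(P_1)\;\subseteq\; f^{n_1}(P_2)\;=\;W,
\]
so $V$ is contained in an $\mathcal{E}$-pre-image of itself of positive order. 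Applying $f^{n_2-n_1}$ to the inclusion $V\subseteq W$ then yields
\[
f^{n_2-n_1}(V)\;\subseteq\; f^{n_2-n_1}(W)\;=\;V,
\]
which is the required relation.

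The main obstacle is the equal-order case of the non-linkedness claim, because Lemma \ref{Main} explicitly excludes it; the remedy is the direct homeomorphism argument sketched above, which relies on the rigidity of regular pre-images (local uniqueness of inverse branches). The rest is bookkeeping with the dynamically closed property of $\mathcal{E}$ and the homeomorphism property of $f^{n_i}|_{P_i}$.
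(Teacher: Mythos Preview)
The paper does not supply its own proof of this corollary; the surrounding section explicitly says ``All the statements and proofs can be found in \cite{Pi1}'', so there is no in-paper argument to compare against. Your proposal is therefore being judged on its own.

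Your argument is essentially correct and follows the natural route. The reduction to Lemma~\ref{Main} with $\mathcal{A}=\{V\}$ for unequal orders is exactly right, and your treatment of (2) is clean: the dynamically closed property of $\mathcal{E}$ indeed guarantees that $f^{n_1}$ restricted to the ambient element $Q\in\mathcal{E}_{n_2}$ containing $P_2$ is a homeomorphism onto an element of $\mathcal{E}_{n_2-n_1}$, so $W=f^{n_1}(P_2)$ is an $\mathcal{E}$-pre-image of $V$ of order $n_2-n_1>0$, and the inclusions follow.

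One point to tighten. In the equal-order case your clopen argument (``$P_1$ and $P_2$ must coincide on an open-and-closed subset, hence either agree or are disjoint'') tacitly uses that $V$ (equivalently each $P_i$) is \emph{connected}. Without connectedness one can cook up disconnected $V$ for which two order-$n$ pre-images genuinely link. In the paper's setting this is harmless, since the nested sets that actually arise are balls or connected components (cf.\ the hypothesis ``collection of \emph{connected} open subsets'' preceding Proposition~\ref{nested}), but you should state the assumption. A slightly slicker phrasing: the inverse branches $g_i=(f^n|_{P_i})^{-1}:V\to P_i$ are continuous, the set $\{g_1=g_2\}\subset V$ is nonempty, closed, and open (because $P_1\cap P_2$ is open), hence equals $V$ by connectedness; thus $P_1=P_2$.

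Finally, your reading of (1) as a statement about \emph{distinct} pre-images is the only sensible one, and your contrapositive argument is fine once the equal-order case above is in place.
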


\subsection{Constructing Nested Sets and Collections}

Let $\mathcal{A}$ be a collection of connected open subsets such that the elements of $\mathcal{A}$ are not contained in any $\mathcal{E}$-pre-image 
of order bigger than zero of an element of $\mathcal{A}$.

\begin{definition}
(Chains of pre-images). A finite sequence $\mathcal{K} = \{P_{0},P_{1}, \dots, P_{n}\}$ of $\mathcal{E}$-pre-images of $\mathcal{A}$ 
is called a \textbf{\textit{chain}} of $\mathcal{E}$-pre-images beginning in $A \in \mathcal{A}$ if

\begin{itemize}

  \item $0 < ord(P_{0}) \leq ord(P_{1}) \leq \dots \leq ord(P_{n})$;
  
  \item $A \leftrightarrow  P_{0}$;
  
  \item $P_{j-1} \leftrightarrow P_{j}, \, 1 \leq j \leq n$; 
  
  \item $P_{i} \neq P_{j}$ if $i \neq j$.

\end{itemize}

\end{definition}

For each $A \in \mathcal{A}$ define the open set
\begin{equation}\label{star}
\displaystyle A^{\star} = A \backslash \overline{\bigcup_{(P_{j})_{j} \in ch_{\mathcal{E}}(A)} \bigcup_{j} P_{j}}
\end{equation}
where $ch_{\mathcal{E}}(A)$ is the set of chains.

\begin{proposition}
\label{nested}
(An abstract construction of a nested collection). For each $A \in \mathcal{A}$ such that $A^{\star} \neq \emptyset$ choose a connected component 
$A'$ of $A^{\star}$. If $\mathcal{A'} = \{A'\mid A \in \mathcal{A} \,\, and \,\, A^{\star} \neq \emptyset\}$ is not an empty collection then $\mathcal{A'}$ 
is an $\mathcal{E}$-nested collection of sets.
\end{proposition}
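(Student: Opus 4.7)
The plan is to verify the nested condition for $\mathcal{A}'$ directly, by contradiction. Suppose $A'_1, A'_2 \in \mathcal{A}'$ come from $A_1, A_2 \in \mathcal{A}$, and that $P$ is an $\mathcal{E}$-pre-image of $A'_2$ with $A'_1 \leftrightarrow P$ and $P \neq A'_2$. Pre-images of order zero coincide with $A'_2$, so one may assume $n := \mathrm{ord}(P) > 0$; the strategy is to exhibit an $\mathcal{E}$-pre-image of some element of $\mathcal{A}$ that contains $P$ and appears as a term of some chain of pre-images starting from $A_1$. Since $A'_1$ is a connected component of $A_1^{\star}$ and therefore disjoint from the closure of every such chain term, and since $A'_1 \cap P \neq \emptyset$ by the linking assumption, this will produce the desired contradiction.

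The first step is a \emph{lifting}: writing $P = f_{|Q}^{-n}(A'_2)$ with $Q \in \mathcal{E}_n$, the set-up ensures that the image $f^n(Q) \in \mathcal{E}_0$ contains $\overline{A_2}$ as well as $\overline{A'_2}$, so that $\tilde P := f_{|Q}^{-n}(A_2)$ makes sense. This $\tilde P$ is an $\mathcal{E}$-pre-image of the full $A_2 \in \mathcal{A}$, it has order $n > 0$, and it contains $P$; in particular $\tilde P$ is eligible to appear in chains of pre-images starting from $A_1$.

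It remains to insert $\tilde P$ into such a chain. If $\tilde P \cap A_1 = \emptyset$, then $A'_1 \cap P \subset A_1 \cap \tilde P = \emptyset$, contradicting $A'_1 \leftrightarrow P$. If $A_1 \leftrightarrow \tilde P$, then $\{\tilde P\}$ is already a one-term chain at $A_1$, hence $\overline{\tilde P}$ is excised from $A_1$ in the formation of $A_1^{\star}$; but $P \subset \tilde P$ and $A'_1 \cap P \neq \emptyset$ then contradict $A'_1 \subset A_1^{\star}$. The standing hypothesis on $\mathcal{A}$ rules out $A_1 \subset \tilde P$. The main obstacle is the remaining case $\tilde P \subsetneq A_1$: here I would push $\tilde P$ forward by $f$, using the dynamical closedness of $\mathcal{E}$ to obtain $\mathcal{E}$-pre-images of $A_2$ of every order between $0$ and $n$, and extract from the resulting family a chain whose terms have non-decreasing orders, which terminates at $\tilde P$, and whose initial term is linked with $A_1$. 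Properly handling the degenerate subcases (such as $A_1 = A_2$, iterates re-entering $A_1$, or the need to interleave with pre-images of other elements of $\mathcal{A}$ to secure the consecutive linking $P_{j-1} \leftrightarrow P_j$) is the delicate part of the book-keeping, and once it is in place the same contradiction with $A'_1 \subset A_1^{\star}$ applies and closes the proof.
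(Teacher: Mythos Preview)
The paper does not prove this proposition; it is quoted from Pinheiro~\cite{Pi1} (see the sentence ``All the statements and proofs can be found in~\cite{Pi1}'' preceding the subsection), so there is no in-paper argument to compare against. I can therefore only comment on the internal soundness of your sketch.

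Your overall architecture --- assume $A'_1 \leftrightarrow P$ for an $\mathcal{E}$-pre-image $P$ of $A'_2$ of positive order, lift $P$ to an $\mathcal{E}$-pre-image $\tilde P$ of the ambient $A_2$, then exhibit $\tilde P$ as a term of some chain in $ch_{\mathcal{E}}(A_1)$ to contradict $A'_1 \subset A_1^\star$ --- is the natural shape. Two points, however, are not yet nailed down.

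\emph{The lifting step.} From $\overline{A'_2} \subset f^n(Q)$ you assert $\overline{A_2} \subset f^n(Q)$, but nothing in the abstract hypotheses on $\mathcal{A}$ forces this: $A'_2$ may be much smaller than $A_2$, and the proposition as stated imposes no size relation between elements of $\mathcal{A}$ and elements of $\mathcal{E}_0$. In the zooming application this is harmless, because there each $A_i$ has diameter $<\delta/2$ and every $f^n(Q)$ is a ball of radius $\delta$, so $\overline{A'_2}\subset f^n(Q)$ does imply $\overline{A_2}\subset f^n(Q)$; but for the abstract statement you either need to add that hypothesis explicitly or find a route that avoids the lift.

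\emph{The case $\tilde P \subsetneq A_1$.} This is the heart of the matter and you leave it as a promissory note. Observe that since $\tilde P \setminus A_1 = \emptyset$, the set $\tilde P$ is \emph{not} linked with $A_1$, so $\{\tilde P\}$ is not itself a chain at $A_1$; for $\tilde P$ to occur as a chain term you must produce a $P_0$ that straddles $\partial A_1$ and a linking path $P_0 \leftrightarrow P_1 \leftrightarrow \cdots \leftrightarrow \tilde P$ with non-decreasing orders. Your proposed device --- pushing $\tilde P$ forward to obtain $f^k(\tilde P)$ of orders $n-k$ --- produces sets that a priori live far from $A_1$ (they are pre-images of $A_2$ of lower order, not sets sitting near $\partial A_1$), so it is not clear how they furnish the required $P_0$ or the consecutive links back down to $\tilde P$. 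Until this bookkeeping is actually carried out, the argument is incomplete; and since this case is exactly the one where the chain structure does real work, the proof as written has a genuine gap.
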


Let us introduce the following notation.

\textbf{Notation}: We denote by $\mathcal{E_{Z}}=
(\mathcal{E_{Z}}_{,n})_{n}$ the collection of all $ (\alpha,\delta)$-zooming pre-balls, where $\mathcal{E_{Z}}_{,n}=\{
V_{n}(x) \mid x \in Z_{n}(\alpha,\delta,f)\}$. Observe that this collection is a dynamically closed family of pre-images.

With the notation of Corollary $\ref{nestcoll}$, let $A_{i}$ be contained in a zooming ball for all $i=0,1,2,\dots,k$. This corollary implies that we have
$\mathcal{A}'$ an $\mathcal{E_{Z}}$-nested collection, if the chains are small enough.

\begin{definition}
(Zooming nested collection and ball). We call $\mathcal{A}'$ an $(\alpha,\delta)$\textbf{\textit{-zooming nested collection}} and each element of an $(\alpha,\delta)$-zooming nested collection is a \textbf{\textit{zooming nested ball}}.  
\end{definition}

\section{Markov Structures Adapted to Special Holes}\label{Markov}

In this section, we proceed with the proof of Theorem $\ref{A}$.

We will use the notation and results of the previous sections to prove some preliminary results. We will extend the technique of Pinheiro in \cite{Pi1}.

\subsection{Existence of Nested Collections}

The following Lemma is proved in \cite{Pi1} for the case of a collection with one ball as corollary of Proposition \ref{nested}. 
We prove it here for a collection with finitely many open sets, pairwise disjoint.

\begin{lemma}
\label{nestcoll}
(Existence of nested collections). Let $\epsilon \in (0,1)$ and $\mathcal{A} = \{A_{0},A_{1},A_{2},\dots,A_{k}\}$ be a finite collection of pairwise disjoint open sets.
Given $p_{i} \in A_{i}, i=0,1,2,\dots,k$, set $r_{i} = d(p_{i}, \partial A_{i}), i=0,1,2,\dots,k$. If we have

\begin{itemize}
\item $f^{n}(A_{i}) \not\subset A_{j},  \text{for all} \, \,\, n \geq 1, i,j=0,1,2,\dots,k$;

\item Every chain of $\mathcal{E}$-pre-images of $\mathcal{A}$ has diameter less than $m_{0} = \min\{\epsilon r_{i} \mid i=0,1,\dots,k\}$;
\end{itemize}

Then the set $A_{i}^{\star}$ contains the ball $B_{r_{i}(1-\epsilon)}(p_{i}), i=0,1,2,\dots,k$. Moreover, setting $A_{i}'$ as the connected component of $A_{i}^{\star}$ 
that contains $p_{i}$, we have that $\mathcal{A}'=\{A_{i}' \mid i=0,1,2,\dots,k\}$ is an $\mathcal{E}$-nested collection.

\end{lemma}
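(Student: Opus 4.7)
My plan is to reduce to Proposition \ref{nested} after establishing the geometric inclusion $B_{r_i(1-\epsilon)}(p_i) \subset A_i^\star$ directly. First I would verify the hypothesis of Proposition \ref{nested}, namely that no $A_i \in \mathcal{A}$ is contained in an $\mathcal{E}$-pre-image of positive order of an element of $\mathcal{A}$. Indeed, if $A_i \subset P$ with $P$ a pre-image of $A_j$ of order $n \geq 1$, then the homeomorphism $f^n|_P$ yields $f^n(A_i) \subset f^n(P) = A_j$, contradicting the first bullet of the hypothesis.

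The core step is geometric. Fix $i$ and any chain $\mathcal{K} = \{P_0, \ldots, P_n\}$ of $\mathcal{E}$-pre-images beginning in $A_i$. Because each $P_j$ is a homeomorphic image of a connected open set in $\mathcal{A}$ under $(f^{\operatorname{ord}(P_j)}|_{P_j})^{-1}$, it is open and connected. From $A_i \leftrightarrow P_0$ we get $P_0 \cap A_i \neq \emptyset$ and $P_0 \setminus A_i \neq \emptyset$; connectedness of $P_0$ then forces $P_0 \cap \partial A_i \neq \emptyset$, since otherwise $P_0$ would split as the disjoint union of the nonempty open subsets $P_0 \cap A_i$ and $P_0 \cap (M \setminus \overline{A_i})$. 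Pick $q^\ast \in P_0 \cap \partial A_i$, so that $d(p_i, q^\ast) \geq r_i$. Then for every $q \in \bigcup_j P_j$,
\[
d(p_i, q) \geq d(p_i, q^\ast) - d(q^\ast, q) > r_i - \operatorname{diam}\Bigl(\bigcup_j P_j\Bigr) > r_i - m_0 \geq r_i(1-\epsilon),
\]
where the penultimate inequality uses the diameter hypothesis and the last uses $m_0 \leq \epsilon r_i$. This bound is uniform over all chains, so passing to the closure gives $B_{r_i(1-\epsilon)}(p_i) \cap \overline{\bigcup_{\mathcal{K} \in ch_{\mathcal{E}}(A_i)} \bigcup_j P_j} = \emptyset$. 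Combined with the elementary observation $B_{r_i(1-\epsilon)}(p_i) \subset A_i$ (every boundary point of $A_i$ lies at distance $\geq r_i$ from $p_i$), this yields the announced inclusion $B_{r_i(1-\epsilon)}(p_i) \subset A_i^\star$.

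Once the ball inclusion is in hand, $p_i \in A_i^\star$ for every $i$, so $A_i^\star$ is nonempty; choosing $A_i'$ to be the connected component of $A_i^\star$ containing $p_i$ and applying Proposition \ref{nested} (whose abstract hypothesis was verified in the first paragraph) immediately yields that $\mathcal{A}' = \{A_0', \ldots, A_k'\}$ is an $\mathcal{E}$-nested collection. The main obstacle I anticipate is the geometric step, specifically the claim that $P_0$ actually meets $\partial A_i$ rather than being ``linked'' via some topologically degenerate configuration; this is resolved by the connectedness of $P_0$, which it inherits from the connected sets in $\mathcal{A}$ via the defining homeomorphism $f^{\operatorname{ord}(P_0)}|_{P_0}$, together with the standard open/closed decomposition argument above.
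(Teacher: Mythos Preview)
Your proof is correct and follows essentially the same route as the paper's: verify the hypothesis of Proposition~\ref{nested} via the first bullet, show that every chain beginning in $A_i$ meets $\partial A_i$ and hence lies within distance $m_0 \le \epsilon r_i$ of $\partial A_i$, conclude the ball inclusion, and invoke Proposition~\ref{nested}. The paper phrases the geometric step as $\bigcup_j P_j \subset B_{\epsilon r_i}(\partial A_i)$ while you give the equivalent pointwise estimate $d(p_i,q) > r_i(1-\epsilon)$; you also spell out explicitly (via connectedness of $P_0$) why $P_0 \cap \partial A_i \neq \emptyset$, a point the paper leaves implicit.
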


\begin{proof}
Since $f^{n}(A_{i}) \not\subset A_{j},  \text{for all} \, \,\, n \geq 1, i,j=0,1,2,\dots,k$, we have that $A_{i}$ is not contained in any $\mathcal{E}$-pre-image of $\mathcal{A}$
(with order bigger than zero) $i=0,1,2,\dots,k$. Let $\Gamma_{A_{i}}$ be the collection of all chains intersecting $\partial A_{i}$. If $(P_{j})_{j} \in \Gamma_{A_{i}}$, 
then $\displaystyle \bigcup_{j} P_{j}$ is a connected open set intersecting $\partial A_{i}$ with diameter less than $m \leq \epsilon r_{i}$. 
So, $\displaystyle \bigcup_{j} P_{j} \subset B_{\epsilon r_{i}}(\partial A_{i}), \text{for all} \, (P_{j})_{j} \in \Gamma_{A_{i}}$. 
As a consequence, we obtain $\displaystyle A_{i}^{\star} = A_{i} \backslash \overline{\bigcup_{(P_{j})_{j} \in ch_{\mathcal{E}}(A_{i})} \bigcup_{j} P_{j}} \supset A_{i}
\backslash \overline{B_{\epsilon r_{i}}(\partial A_{i})} \supset B_{r_{i}(1 - \epsilon)}(p_{i})$. Then, $A_{i}^{\star} \supset B_{r_{i}(1-\epsilon)}(p_{i})$. 
Setting $A_{i}'$ the connected component of $A_{i}^{\star}$ that contain $p_{i}$, by the previous proposition we have that $\mathcal{A}'$ 
is an $\mathcal{E}$-nested collection.   
\end{proof}

\subsection{Existence of Zooming Nested Collections}

The following lemma gives a sufficient condition to guarantee that the chains are small enough, in order to show that $\mathcal{A}'$ in the previous section is an 
$(\alpha,\delta)$-zooming nested collection. It is proved in \cite{Pi1} for the case of a nested set.

\begin{lemma}
\label{zoomnestcoll}
(Existence of zooming nested collection). Let $\epsilon \in (0,1)$ and  $M_{0} = \max\{diam(A_{i}) \mid i=0,1,2,\dots,k\}$ and $m_{0}=\min\{\epsilon r_{i} \mid i=0,1,2,\dots,k \}$.
We also suppose that $diam(A_{i}) > \alpha_{n} (diam(A_{j})), i,j=0,1,2,\dots,k, \text{for all} \,\, n \in \mathbb{N}$.
\begin{itemize}
\item If we have $\sum_{n \geq 1} \alpha_{n}(M_{0}) \leq m_{0}$, then $A_{i}^{\star}$ is well defined and  $A_{i}^{\star} \supset B_{(1-\epsilon)r_{i}}(p_{i}), i=0,1,2,\dots,k$. 

\item If $f$ is backward separated, $\sup_{r>0} \sum_{n \geq 1} \alpha_{n}(r)/r < \infty$ and $M_{0}$ is such that $M_{0}\epsilon/2 \leq m_{0}$, then $A_{i}^{\star}$ is well defined
and $A_{i}^{\star} \supset B_{(1- \epsilon)r_{i}}(p_{i}), i=0,1,2,\dots,k$.
\end{itemize}
\end{lemma}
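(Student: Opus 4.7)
The strategy is to reduce to Lemma \ref{nestcoll} by verifying its two hypotheses for the collection $\mathcal{A}$. Namely, we need (H1) $f^n(A_i) \not\subset A_j$ for all $n \geq 1$ and all indices $i, j$, and (H2) every chain of $\mathcal{E_{Z}}$-pre-images of $\mathcal{A}$ has diameter strictly less than $m_0$. Once both hold, the conclusion $A_i^\star \supset B_{(1-\epsilon) r_i}(p_i)$ is immediate from Lemma \ref{nestcoll}.

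For (H1) I would argue by contradiction: if $f^n(A_i) \subset A_j$ for some $n$, the assumption that $A_i$ is contained in a zooming ball provides enough structure (via the associated branch of $f^{-n}$) to locate $A_i$ inside an $\mathcal{E_{Z}}$-pre-image of $A_j$ of order $n$; the zooming contraction then forces $diam(A_i) \leq \alpha_n(diam(A_j))$, contradicting the standing hypothesis $diam(A_i) > \alpha_n(diam(A_j))$.

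For (H2), each link $P_j$ of a chain is a zooming pre-image of some $A_{k_j}$ of order $ord(P_j)$, so the zooming estimate yields $diam(P_j) \leq \alpha_{ord(P_j)}(diam(A_{k_j})) \leq \alpha_{ord(P_j)}(M_0)$. A short argument then shows the orders along a chain are strictly increasing: two linked pre-images of the same order would have $f^n$-images intersecting, contradicting pairwise disjointness of $\mathcal{A}$. Under the first hypothesis this gives
$$
diam\Bigl(\bigcup_{j} P_j\Bigr) \leq \sum_{j} \alpha_{ord(P_j)}(M_0) \leq \sum_{n \geq 1} \alpha_n(M_0) \leq m_0,
$$
as desired.

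Under the second hypothesis, the per-link bound and the order-monotonicity remain, but the summability requires a different estimate. Backward separation applied to the finite set $\{p_0, \ldots, p_k\}$ gives, for each fixed $N$, a positive distance between these centers and their iterated pre-images of orders at most $N$ (apart from the $p_i$ themselves); for $M_0$ small this propagates to a separation between the $A_i$'s and their non-trivial low-order pre-images, bounding the low-order contribution to any chain. For high orders, the scale-invariant estimate $\sup_{r > 0} \sum_n \alpha_n(r)/r =: C < \infty$ yields $\sum_n \alpha_n(M_0) \leq C M_0$, which combined with the smallness condition $M_0 \epsilon / 2 \leq m_0$ controls the tail. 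The main obstacle is this second case: backward separation is a qualitative property (a positive gap for each fixed order rather than uniformly in $n$), so the argument must split the chain into a low-order regime controlled by backward separation and a high-order tail controlled by the scale-invariant sum, and show that the two bounds combine to keep the total chain diameter below $m_0$.
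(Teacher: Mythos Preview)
Your strategy is correct and matches the paper's proof closely: verify the hypotheses of Lemma~\ref{nestcoll} by checking (H1) via the diameter assumption $diam(A_i) > \alpha_n(diam(A_j))$ and (H2) by bounding chain diameters through the zooming contraction estimates. One refinement worth noting for the second case: in the paper's argument the low-order contribution to a chain is not merely bounded but \emph{vanishes entirely}. One first fixes $n_0$ so that the tail $\sum_{n \geq n_0} \alpha_n(M_0) < M_0 \epsilon/2$, then uses backward separation (for this fixed $n_0$) to force, for $M_0$ small enough, that no pre-image of order $< n_0$ can touch any $A_i$; since a chain begins linked to some $A_i$ and has nondecreasing orders, every link then has order at least $n_0$ and only the tail sum enters. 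Your phrase ``the two bounds combine'' slightly obscures this cleaner mechanism, but the underlying idea is the same.
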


\begin{proof}
Firstly, observe that $f^{n}(A_{i}) \not\subset A_{j},  \text{for all} \, \,\, n \geq 1, i,j=0,1,2,\dots,k$, since $diam(A_{i}) > \alpha_{n} (diam(A_{j})), i,j=0,1,2,\dots,k, 
\text{for all} \, n \in \mathbb{N}$. Moreover, every chain  has diameter less than $\sum_{n \geq 1} \alpha_{n}(a_{n})$ for some $(a_{m})_{m} \in \{diam(A_{0}),diam(A_{1}),
diam(A_{2}),\dots,diam(A_{k})\}^{\mathbb{N}}$ and also, $\sum_{n \geq 1} \alpha_{n}(a_{n}) < \sum_{n \geq 1} \alpha_{n}(M_{0}) \leq m_{0}$. 
By Lemma \ref{nestcoll}, the first part is done.

For the second part, we are assuming that $\sup_{r>0} \sum_{n \geq 1} \alpha_{n}(r)/r < \infty$, then there exists $n_{0} \in \mathbb{N}$ such that $\sum_{n \geq n_{0}} \alpha_{n}(M_{0})/M_{0} 
< \epsilon/2$. If $f$ is backward separated, let $\gamma > 0$ such that 
\[
d(F_{0}, \cup_{j=1}^{n_{0}} f^{-j}(F_{0}) \backslash F_{0}) > \gamma,
\]
where $F_{0}=\{p_{0},p_{1},\dots,p_{k}\}$. Set $r_{\gamma} = \frac{1}{6}\min\{\epsilon,\gamma\}$. Suppose that $M_{0} < 2r_{\gamma}$. 
Note that if $j < n_{0}$ then $A_{i} \cap P = \emptyset, \,\, \text{for all} \, P \in \mathcal{E_Z}_{,j}$ 
(because $P \cap \big{(} \cup_{j=1}^{n_{0}} f^{-j}(F_{0}) \backslash F_{0} \big{)} \neq \emptyset$ and $diam(P) < diam(A_{i}) < 2r_{\gamma} 
< \gamma, \epsilon, \,\, \text{for all} \, \,\, i)$. Thus, every chain of $\mathcal{E_{Z}}$-pre-images of $\mathcal{A}$ begins with a pre-image of order bigger than $n_{0}$. 
Observe that the diameter of any chain is smaller than $\sum_{n \geq n_{0}} \alpha_{n}(M_{0}) < M_{0} \epsilon/2 \leq m_{0} \leq \epsilon r_{i}, i=0,1,2,\dots,k$ and, 
as a chain intersects the boundary $A_{i}$, we can conclude that the chain cannot intersect $B_{(1-\epsilon)r_{i}}(p_{i})$. 
So, we obtain $A_{i}^{\star} \supset B_{(1- \epsilon)r_{i}}(p_{i})$.
\end{proof}

\subsection{Constructing a Markov Structure} \label{components}

Let $f:M \to M$ be a zooming system. If a zooming time is also a return time we call it a \textbf{\textit{zooming return}}.

Let $r < \frac{\delta}{4}$ and  $\{A_{i} = B_{r}(p_{i}) \mid i=1,2,\dots,k\}$ be a finite covering of $M$ such that $B_{r}(p_{i}) \cap B_{r/2}(p_{j}) = \emptyset, \text{for all} \, \, i,j \leq k$. By considering $A_{1},A_{2},\dots,A_{k}$ with diameters $diam(A_{i})=2r, i=0,1,2,\dots,k$,
we can take $M_{0} = 2r$, in the Lemma \ref{zoomnestcoll} to obtain an $(\alpha,\delta)$ -zooming nested collection $\mathcal{A}' = \{A_{1}',A_{2}',\dots,A_{k}'\}$ 
such that  $B_{\frac{r}{2}}(p_{i}) \subset A_{i}' \subset B_{r}(p_{i}), \text{for all} \, i \leq k$. We choose some sets $A_{k_{1}}',A_{k_{2}}',\dots,A_{k_{t}}' \in \mathcal{A}'$ and construct the induced full Markov map in $\cup_{i=1}^{k} A_{i}'$, that is
adapted to the hole $H=\displaystyle \cup_{j=1}^{t} A_{k_{j}}'$ or $H = \emptyset$. Note that $diam(A_{i}') \leq \frac{\delta}{2}$ and $\displaystyle \cup_{j=1}^{t} B_{\frac{r}{2}}(p_{k_{j}}) \subset H \subset \cup_{j=1}^{t} B_{r}(p_{k_{j}})$.

\begin{remark}
	We observe that we need the hole $H$ as a union of $\mathcal{E}$-nested sets to guarantee the adaptation of the Markov structure. If we had a general union of open sets, for an element $P$ of the partition, it might happen that $f^{n}(P) \cap H \neq \emptyset$ without having $f^{n}(P) \subset H$. Moreover, in the equation \ref{star} we need the contraction of the zooming balls to guarantee the shrinking of the pre-balls in order to have $P_{j} \subset_{\neq} A$ and $A^{\star} \neq \emptyset$. Because of it we need to consider a union of $\mathcal{E}$-nested sets instead considering $H^{\star}$ for a general open set $H$.
\end{remark}
\begin{claim}
We claim that any inducing scheme constructed in some $A'_{i}, i \leq k$, is adapted to the hole $H$.
\end{claim}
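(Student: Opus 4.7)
The plan is to check the ``adapted to the hole'' condition directly from the construction. For every $P \in \mathcal{P}$ and every $0 \leq m < R(P)$ one must show that $f^m(P) \cap H \neq \emptyset$ implies $f^m(P) \subset H$. The essential ingredients are (i) that $f^m(P)$ is again an $\mathcal{E}_Z$-pre-image of $A'_i$, of positive order, and (ii) that the collection $\mathcal{A}'$ is $\mathcal{E}_Z$-nested by Lemma~\ref{zoomnestcoll}.

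First, I would record the structural fact. By the construction of the induced scheme inside $A'_i$ via zooming return times, every $P \in \mathcal{P}$ is an $\mathcal{E}_Z$-pre-image of $A'_i$ of order $R(P)$, associated with a zooming pre-ball $V_{R(P)}$ on which $f^{R(P)}$ is a homeomorphism onto a zooming ball containing $A'_i$. Since the family $\mathcal{E}_Z$ of zooming pre-balls is dynamically closed under forward iteration, for every $0 \leq m < R(P)$ the set $f^m(P)$ is itself an $\mathcal{E}_Z$-pre-image of $A'_i$, of positive order $n := R(P)-m \geq 1$.

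Next, I would invoke the nested property. Suppose $f^m(P) \cap H \neq \emptyset$. Since $H = \bigcup_{j=1}^{t} A'_{k_j}$, there is some index $j$ with $f^m(P) \cap A'_{k_j} \neq \emptyset$. Because $A'_{k_j} \in \mathcal{A}'$ and $f^m(P)$ is an $\mathcal{E}_Z$-pre-image of $A'_i \in \mathcal{A}'$ of positive order, the definition of an $\mathcal{E}_Z$-nested collection forces $A'_{k_j}$ and $f^m(P)$ not to be linked, so combined with the non-empty intersection one of them is contained in the other.

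The main obstacle is to rule out the scenario $A'_{k_j} \subsetneq f^m(P)$. I expect this to be handled by combining the zooming contraction bound $\mathrm{diam}(f^m(P)) \leq \alpha_{n}(\mathrm{diam}(A'_i)) \leq \alpha_{n}(2r)$ with the lower inclusion $B_{r/2}(p_{k_j}) \subset A'_{k_j}$ and the separation condition $B_r(p_i) \cap B_{r/2}(p_j) = \emptyset$ built into the covering $\mathcal{A}$: after shrinking $r_0$ appropriately, a positive-order $\mathcal{E}_Z$-pre-image of $A'_i$ is simultaneously localised inside a zooming pre-ball of small diameter and too small to strictly contain a whole element of $\mathcal{A}'$. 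Once this alternative is excluded, we conclude $f^m(P) \subset A'_{k_j} \subset H$, which is exactly the adapted-to-hole condition that proves the claim.
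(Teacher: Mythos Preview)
Your proposal is correct and follows essentially the same route as the paper: both arguments observe that $f^{m}(P)$ is an $\mathcal{E}_{Z}$-pre-image of $A'_{i}$ of positive order $R(P)-m$, and then use the $\mathcal{E}_{Z}$-nestedness of $\mathcal{A}'$ to conclude that $f^{m}(P)$ and the hole component $A'_{k_{j}}$ it meets are not linked, hence $f^{m}(P)\subset A'_{k_{j}}\subset H$.

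The only difference is one of detail. The paper's argument is terse: once non-linkage is established it simply asserts $f^{j}(P)\subset A'_{i}$ without explicitly excluding the reverse inclusion. You correctly flag this as the only nontrivial point and resolve it via the diameter bound $\operatorname{diam}(f^{m}(P))\le \alpha_{n}(2r)$, which by the standing hypothesis of Lemma~\ref{zoomnestcoll} (namely $\operatorname{diam}(A_{i})>\alpha_{n}(\operatorname{diam}(A_{j}))$ for all $n\ge 1$, and the containment $B_{r/2}(p_{k_{j}})\subset A'_{k_{j}}$) is strictly smaller than $\operatorname{diam}(A'_{k_{j}})$; so $A'_{k_{j}}\subsetneq f^{m}(P)$ is impossible. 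Your invocation of the separation condition $B_{r}(p_{i})\cap B_{r/2}(p_{j})=\emptyset$ is not actually needed here---the diameter comparison alone suffices---but it does no harm.
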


Let $A_{0}' \in \mathcal{A}'$ and $\mathcal{E_{Z}}$ be the collection of zooming pre-balls. Given $x \in A_{0}'$, let  $\Omega(x)$ be the collection of all $\mathcal{E_{Z}}$-pre-images $V$ of $A_{0}'$ such that $x \in V$.

Let $h(x) = \{f^{n}(x) \mid n \,\, \text{is a zooming time of} \,\, x\}$. The set $\Omega(x)$ is not empty for every $x \in A_{0}'$ that has a zooming return to $A_{0}'$. Indeed, if $x \in A_{0}'$ and $f^{n}(x) \in A_{0}' \cap h(x)$, then the ball $B_{\delta}(f^{n}(x)) = f^{n}(V_{n}(x)) \supset A_{0}'$ (because $\text{diameter}(A_{0}') < \frac{\delta}{2}$). Thus, for each $h$-return of a point $x \in A_{0}'$ we can associate the $\mathcal{E_{Z}}$ - pre-image $P = f_{|V_{n}(x)}^{-n}(A_{0}')$ of $A_{0}'$ with $x \in P$. 

\begin{definition}
The inducing time on $A_{0}'$ associated to "the first $\mathcal{E_{Z}}$-return to  $A_{0}'$" is the function $R: B_{r}(p) \to \mathbb{N}$ given by
\[
R(x) = 
\left\{
\begin{array}{rcl}
\min\{ord(V) \mid V \in \Omega(x)\}, \mbox{if} \,\, \Omega(x) \neq \emptyset\\
0\,\,\,\,\,\,\,\,\,\,\,\,\,\,\,\,\,\,\,\,\,\,\,\,\,\,\,\, , \mbox{if} \,\, \Omega(x) = \emptyset.
\end{array}
\right.
\] 
\end{definition} 

\begin{definition}
The induced map F associated to "the first $\mathcal{E_{Z}}$-return to $A_{0}'$" is the map given by $F(x) = f^{R(x)}(x), \,\, \text{for all} \, x \in A_{0}'$.
\end{definition}

Since the collection $\Omega(x)$ is totally ordered by inclusion it follows from the Corollary \ref{Main} that there is a unique $I(x) \in \Omega(x)$ such that $ord(I(x)) = R(x)$ whenever $\Omega(x) \neq \emptyset$.

\begin{lemma}
If $\Omega(x) \neq \emptyset \neq \Omega(y)$ then either $I(x) \cap I(y) = \emptyset$ or $I(x) = I(y)$
\end{lemma}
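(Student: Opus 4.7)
The plan is to exploit the fact that $A_0'$ is $\mathcal{E}_{\mathcal{Z}}$-nested, being a member of the nested collection $\mathcal{A}'$ produced by Lemma \ref{zoomnestcoll}. By the Main Property of a nested set (the Corollary following Lemma \ref{Main}), any two $\mathcal{E}_{\mathcal{Z}}$-pre-images of $A_0'$ are pairwise not linked; in particular $I(x)$ and $I(y)$ are not linked.

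I would begin by assuming $I(x) \cap I(y) \neq \emptyset$. Not being linked, the two pre-images must be nested, so without loss of generality $I(x) \subseteq I(y)$. Since $x \in I(x) \subseteq I(y)$, we have $I(y) \in \Omega(x)$, and the minimality of $R(x) = \mathrm{ord}(I(x))$ forces $\mathrm{ord}(I(x)) \leq \mathrm{ord}(I(y))$.

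Next I would split into two cases. If $\mathrm{ord}(I(x)) = \mathrm{ord}(I(y)) = n$, then $f^{n}|_{I(y)}:I(y)\to A_0'$ is a homeomorphism, and since $f^n(I(x)) = A_0' = f^n(I(y))$ with $I(x) \subseteq I(y)$, injectivity forces $I(x) = I(y)$ (equivalently, part~(1) of the Corollary forbids two distinct intersecting pre-images of the same order). If instead $\mathrm{ord}(I(x)) < \mathrm{ord}(I(y))$, so that $I(x) \subsetneq I(y)$, then part~(2) of the Corollary yields an $\mathcal{E}_{\mathcal{Z}}$-pre-image $Q$ of $A_0'$ of positive order $k := \mathrm{ord}(I(y)) - \mathrm{ord}(I(x))$ with $A_0' \subseteq Q$. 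Applying the zooming property to $Q$ gives $\mathrm{diam}(Q) \leq \alpha_k(\mathrm{diam}(A_0'))$, and the strict contraction $\alpha_k(r)<r$ yields $\mathrm{diam}(Q) < \mathrm{diam}(A_0')$, contradicting $A_0' \subseteq Q$.

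The main obstacle, and essentially the only nontrivial step, is Case~2: translating the abstract containment $A_0' \subseteq Q$ into a quantitative diameter comparison. The first axiom of a zooming contraction ($\alpha_n(r) < r$ for all $r>0$) is exactly what is needed; the rest of the argument is bookkeeping on orders and inclusions provided by the nested-set framework, together with the definition of $R$ as a minimum.
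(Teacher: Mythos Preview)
Your proof is correct and complete. The paper itself does not include a proof of this lemma---it is stated and implicitly deferred to Pinheiro \cite{Pi1}---so there is no in-paper argument to compare against; the route you take (non-linkedness of $\mathcal{E}_{\mathcal{Z}}$-pre-images from the Main Property of a nested set, minimality of $R$ to force $\mathrm{ord}(I(x))\le\mathrm{ord}(I(y))$ once $I(x)\subseteq I(y)$, and the strict zooming contraction $\alpha_k(r)<r$ to rule out the strict-containment case via a diameter comparison) is the natural one and matches what the surrounding text sets up.
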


\begin{definition}
The Markov partition associated to "the first $\mathcal{E_{Z}}$-return to $A_{0}'$" is the collection of open sets $\mathcal{P}$ given by $\mathcal{P} = \{I(x) \mid x \in A_{0}' \,\, and \,\,\Omega(x) \neq \emptyset\}$.
\end{definition}

The following corollary guarantees that $P$ is indeed a Markov partition of open sets.

\begin{corollary}
Let $F,R,\mathcal{P}$ be as above. If $\mathcal{P} \neq \emptyset$, then $(F,\mathcal{P})$ is an induced full Markov map for $f$ on $A_{0}'$.
\end{corollary}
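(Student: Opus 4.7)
The plan is to verify in turn every clause required for $(F,\mathcal{P})$ to be an induced full Markov map for $f$ on $A_{0}'$: clauses (1)--(5) of the definition of Markov partition, the identities $\{R\geq 1\}=\bigcup_{P\in\mathcal{P}}P$, $R|_{P}\equiv\text{const}$, $F(x)=f^{R(x)}(x)$, and the fullness $F(P)=A_{0}'$. Most of these are built into the construction, so the plan is to explain why each one follows from the preceding results and isolate the single clause that requires the zooming hypothesis in an essential way.

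First I would note that each $P=I(x)\in\mathcal{P}$ is open, being the image of the open set $A_{0}'$ under the homeomorphism $f_{|V_{R(x)}(x)}^{-R(x)}$ provided by the zooming time. That the elements of $\mathcal{P}$ have pairwise disjoint interiors (clause (1)) is exactly the preceding lemma, which asserts that two sets $I(x),I(y)$ are either identical or disjoint. Fullness, clause (2), clause (3), and clause (4) all follow in one stroke from the identity $f^{R(x)}(I(x))=A_{0}'$, which is a direct consequence of $I(x)=f_{|V_{R(x)}(x)}^{-R(x)}(A_{0}')$: indeed $F(P)=A_{0}'$ for every $P$, so $\#\{F(P):P\in\mathcal{P}\}=1$ and $F(P)\supset\operatorname{int}(P')$ for every $P'\in\mathcal{P}$, while the homeomorphism property (together with its extension to the closures) is inherited from the zooming homeomorphism $\overline{V_{R(x)}(x)}\to\overline{B_{\delta}(f^{R(x)}(x))}$. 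The remaining identities follow once one observes that $y\in I(x)$ implies $I(x)\in\Omega(y)$, hence $\Omega(y)\neq\emptyset$ and, by the disjointness lemma, $I(y)=I(x)$; thus $R(y)=\operatorname{ord}(I(y))=R(x)$, and the identity $F(x)=f^{R(x)}(x)$ is built into the definition of $F$.

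The only genuinely non-formal clause is (5), namely that $\operatorname{diam}\mathcal{P}_{n}(x)\to 0$ for every $x$ in $\bigcap_{n\geq 0}F^{-n}(\bigcup_{P}P)$. The approach here is to observe that $\mathcal{P}_{n}(x)$ is itself an $\mathcal{E_{Z}}$-pre-image of $A_{0}'$: if the successive $F$-iterates of $x$ land in elements of $\mathcal{P}$ of orders $r_{0},r_{1},\dots,r_{n}$, then $\mathcal{P}_{n}(x)$ is the pre-image of $A_{0}'$ under $f^{r_{0}+\cdots+r_{n}}$ along the corresponding composition of local zooming inverses, and lies inside a zooming pre-ball of the same order. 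The zooming property then bounds its diameter by $\alpha_{r_{0}+\cdots+r_{n}}(\operatorname{diam} A_{0}')$, and since each $r_{i}\geq 1$, the monotonicity and summability conditions on $(\alpha_{n})_{n}$ force this upper bound to tend to zero as $n\to\infty$. This step, which is where the full force of the zooming contraction enters, is the one point of the argument I expect to require care.
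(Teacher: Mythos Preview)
Your proposal is correct and constitutes exactly the verification one would expect; the paper itself gives no proof of this corollary, treating it as an immediate consequence of the construction (which is taken over from \cite{Pi1}). Your careful checking of each clause, in particular your identification of clause~(5) as the only place where the zooming contraction $(\alpha_n)_n$ is genuinely used, is precisely the argument that underlies the unproved statement.
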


We have an induced full Markov map defined on $A_{0}'$. It remains to show that the induced full Markov map defined on $A_{0}'$, is adapted the hole $H$. In fact, given an element $P \in \mathcal{P}$ and $0 \leq j < R(P)$ and suppose that we have $f^{j}(P) = f^{j - R(P)} (A_{0}') \cap H \neq \emptyset$. There is $i \in \{1,2,\dots,k\}$ such that $f^{j}(P) = f^{j - R(P)} (A_{0}') \cap A_{i}' \neq \emptyset$. As the collection $\mathcal{A}'$ is nested, we must have $f^{j}(P) \subset A_{i}' \subset H$. This holds for all $P \in \mathcal{P}$ and $0 \leq j < R(P)$. Then, the induced full Markov map is adapted to the hole $H$ as we claimed.

In order to show that the collection of inducing schemes adapted to the hole $H$ is a Markov structure, we use the proof of Theorem D in \cite{Pi1}, which shows that a collection of inducing schemes, as we have constructed here, has the property that every ergodic probability is lifitable to some of them.

\begin{remark}
We observe that our construction differs from the one in \cite{Pi1} in two aspects. Firstly, Pinheiro considered nested collections with only one element. Also, there the dynamics is not open. But everything is made here along the same lines. Since the result is similar, almost all consequences hold here.
\end{remark}

\begin{remark}
Considering $\Lambda$ as a zooming set (that is, any previously fixed forward invariant set with positive frequency of zooming times and full measure with respect to a zooming measure of reference), we will treat the case where $H \cap \Lambda = \emptyset$, which means that every invariant zooming measure give full measure to the survivor set. The case $H = \emptyset$ will be considered when the zooming set is dense and reducing to closed maps.
\end{remark}

\section{Equilibrium states and conformal measures for the lifted dynamics}\label{Lifted}

In this section we begin the proof of Theorem $\ref{B}$ and the first part of Theorem \ref{C} of the existence of equilibrium states. The strategy is to lift the dynamics to the Markov Structure obtained in Theorem \ref{A}, finding equilibrium states for the induced potential and then projecting them. For the pseudo-conformal measure, later on we will spread it by using a modified inducing scheme.

\subsection{Markov shifts}

Now we recall the basic definitions of symbolic dynamics. Given a countable set $S$, we define the \textbf{\textit{space of symbols}} 
\[
\Sigma  = \{(x_{1},x_{2},\dots,x_{n},\dots) |x_{i} \in S, \text{for all} \, i \in \mathbb{N}\}.
\]
The \textbf{\textit{shift map}} $\sigma : \Sigma \to \Sigma$
is defined by
\[
\sigma(x_{1},x_{2},\dots,x_{n},\dots) = (x_{2},x_{3},\dots,x_{n},\dots).
\]
A \textbf{\textit{cylinder}} is a set of the form 
\[
C_{n} = \{x \in \Sigma \mid  x_{1} = a_{1}, \dots, x_{n} = a_{n}\}.
\]
When an inducing scheme $(F,\mathcal{P})$ is given, we can define a space of symbols by the following rule. Let $x \in \mathcal{U}$ be a point such that 
$F^{k}(x)$ is well defined for all $k \in \mathbb{N}$. To obtain a sequence $(x_{1},x_{2},\dots,x_{n},\dots)$, we put $x_{i} = j$ if $F^{i}(x) \in P_{j}$. 
So, we can see that the map $F$ is conjugate to the shift map. The advantage here is that we can use the theory of symbolic dynamics to obtain results 
for our original map.

\subsection{Locally H\"older potentials}

The results in this section are true for potentials $\phi : M \to \mathbb{R}$ such that the induced potential $\overline{\phi}$ is locally Hölder (see definition below). We will use a mild condition on the contractions $(\alpha_{n})_{n}$ to show that the induced potential of a Hölder potential is locally Hölder. 

Given a potential $\phi : M \to \mathbb{R}$ and an inducing scheme $(F,\mathcal{P})$, we define the {induced potential} as 
\begin{equation}\label{indpot}
	\overline{\phi}(x) =\sum_{j=0}^{R(x)-1} \phi(f^{j}(x)).
\end{equation}
Given a potential $\Phi : \Sigma \to \mathbb{R}$, we say that $\Phi$ is \emph{locally H\"older} if there exist $A > 0$ and 
$\theta \in (0,1)$ such that for all $n \in \mathbb{N}$ 
\[
V_{n}(\Phi) : = \sup\left\{|\Phi(x) - \Phi(y)| \colon  x, y \in C_{n}\right\} \leq A\theta^{n}.
\]
We say that a potential $\Phi : \Sigma \to \mathbb{R}$ has \emph{summable variation} if $\sum_{n=1}^{\infty}V_{n}(\Phi) < \infty$. It is easy to see that every locally Hölder potential has summable variation. 

\begin{lemma}
If the contraction $(\alpha_{n})_{n}$ satisfies $\alpha_{n}(r) \leq ar$ for some $a \in (0,1)$, every $n \in \mathbb{N}$ and every $r \in [0,+\infty)$ and $\phi:M \to \mathbb{R}$ is H\"older, the induced potential $\overline{\phi}$ is locally H\"older.
\end{lemma}
\begin{proof}
	As $\phi$ is H\"older, there are constants $\rho, \alpha > 0$ such that $| \phi(x) - \phi(y) | \leq \rho d(x,y)^{\alpha}$. We must show that there are   
	$A > 0$ and $\theta \in (0,1)$ such that 
	\begin{equation}\label{goal}
		|\overline{\phi}(x) - \overline{\phi}(y)| \leq A \theta^{n}, \text{for all} \, \, \, x,y \in C_{n}.
	\end{equation}
	In fact, given $x,y \in C_{n}$, 
	there are $P_{i_{0}}, P_{i_{1}}, \dots, P_{i_{n}}$ such that $F^{k}(x), F^{k}(y) \in P_{i_{k}}$. Then, we have
	\begin{eqnarray*}
		|\overline{\phi}(x) - \overline{\phi}(y) | &= &
		\left|
		\sum_{j=0}^{R_{i_{0}}-1}
		\phi(f^{j}(x)) - \sum_{j=0}^{R_{i_{0}}-1}\phi(f^{j}(y))\right|
		\leq 
		\sum_{j=0}^{R_{i_{0}}-1}
		\left|\phi(f^{j}(x)) - \phi(f^{j}(y))\right|\\
		&\leq &
		\rho \sum_{j=0}^{R_{i_{0}} - 1} d(f^{j}(x),f^{j}(y))^{\alpha}
		\leq  \rho \sum_{j=0}^{R_{i_{0}} - 1} (\alpha_{j}(d(F(x),F(y)))^{\alpha}\\
		&\leq &
		\rho \sum_{j=0}^{R_{i_{0}} - 1} (\alpha_{R_{i_{1}}}(\alpha_{j}(d(F^{2}(x),F^{2}(y))))^{\alpha}\\	
		&\leq& 
		\rho \sum_{j=0}^{R_{i_{0}} - 1} (\alpha_{R_{i_{n}}}(\dots(\alpha_{R_{i_{1}}}(\alpha_{j}(d(F^{n}(x),F^{n}(y))))))^{\alpha}\\ 
		&\leq &
		\rho \sum_{j=1}^{R_{i_{0}}} (a^{n} \alpha_{j}(d(F^{n}(x),F^{n}(y))^{\alpha} 
		\leq 
		\rho (a^{\alpha})^{n}\sum_{j=1}^{R_{i_{0}}} \alpha_{j}(2\delta)^{\alpha}\\
		&\leq &
		\rho \sum_{j=1}^{\infty} \alpha_{j}(2\delta)^{\alpha} (a^{\alpha})^{n}.\\  
	\end{eqnarray*}
	This yields~\eqref{goal}  with $A  = \rho \sum_{j=1}^{\infty} \alpha_{j}(2\delta)^{\alpha}$ and $\theta  = a^{\alpha}$, that is finite because the contraction is summable. Then, the induced potential is locally H\"older.
\end{proof}
\begin{remark}
	We remind that if the contraction is Lipschitz and, in particular, exponential, the hypothesis is satisfied because for $\alpha_{n}(r) = a_{n}r$ we have $a_{n} \to 0$ and we can take $a = \max\{a_{n}\} \in (0,1)$.
\end{remark}

\subsection{Equilibrium states and conformal measures for Markov shifts}

It follows from  Theorem $\ref{A}$ that there exist an inducing scheme $(F,\mathcal{P})$ and a sequence~$\mu_{n}$ of liftable ergodic probabilities such that 
 $h_{\mu_{n}}(f) + \int \phi d\mu_{n} \to P_{f}(\phi)$ in the case when $\phi$ is zooming. The next result establishes \emph{Abramov's Formulas}.

\begin{proposition}[Zweim\" uller \cite{Z}]\label{pr.zwei} If $\mu$  is liftable to $\overline{\mu}$, then  
\[
h_{\mu}(f) = \frac{h_{\overline{\mu}}(F)}{\int R d \overline{\mu}} \quad\text{and}\quad \int \phi d \mu = \frac{\int \overline{\phi} d\overline{\mu}}{\int R d \overline{\mu}}.
\]
\end{proposition}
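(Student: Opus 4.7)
The plan is to prove the two identities separately: the integral formula from a direct unfolding of the liftability definition, and the entropy formula via a Kakutani tower together with Abramov's formula.

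For the integral formula, I would begin by recasting the liftability definition, applied to a measurable $A\subset M$, via Fubini as
$$\mu(A) \;=\; \int_{\mathcal{U}}\sum_{j=0}^{\tau(x)-1}\mathbf{1}_{A}(f^{j}(x))\,d\bar{\mu}(x) \;=\; \int \overline{\mathbf{1}_{A}}\,d\bar{\mu}.$$
So $\int\phi\,d\mu = \int\bar{\phi}\,d\bar{\mu}$ holds for indicator functions; linearity extends it to simple functions, monotone convergence to nonnegative measurable $\phi$, and the split $\phi=\phi_{+}-\phi_{-}$ to integrable $\phi$. Specializing to $\phi\equiv 1$ yields $\mu(M)=\int\tau\,d\bar{\mu}$, and dividing by this common factor produces the stated quotient $\int\bar{\phi}\,d\bar{\mu}/\int\tau\,d\bar{\mu}$ (which is what one wants when $\bar{\mu}$ is normalized as a probability on $\mathcal{U}$ rather than via the lifting relation itself).

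For the entropy formula, I would build the Kakutani tower
$$\hat{M} \;=\; \{(x,i) : x\in\mathcal{U},\ 0\leq i<\tau(x)\},$$
with $\hat{f}(x,i)=(x,i+1)$ for $i<\tau(x)-1$ and $\hat{f}(x,\tau(x)-1)=(F(x),0)$, carrying the natural measure $\hat{\mu}$ obtained from $\bar{\mu}$ and counting measure on the fiber, normalized to a probability by the factor $\int\tau\,d\bar{\mu}$. The first-return map of $\hat f$ to the base $\mathcal{U}\times\{0\}$ coincides with $F$, and the projection $\pi(x,i)=f^{i}(x)$ provides a measurable factor map $(\hat M,\hat f,\hat{\mu})\to (M,f,\mu)$ with $\pi_{*}\hat{\mu}=\mu$. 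Abramov's theorem for suspensions then yields $h_{\hat{\mu}}(\hat{f})=h_{\bar{\mu}}(F)/\int\tau\,d\bar{\mu}$, and since $\pi$ is countable-to-one and generating, entropy is preserved, yielding the claimed identity.

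The main obstacle is that the inducing partition $\mathcal{P}$ is countably infinite and the inducing time $\tau$ is unbounded, so both the Abramov step and the entropy invariance under $\pi$ require care: the classical finite-partition arguments do not apply directly. The key estimate is that the Shannon entropy of the level partition $\{\tau=n\}_{n\geq 1}$ is finite with respect to $\bar{\mu}$, which under the liftability assumption $\int\tau\,d\bar{\mu}<\infty$ follows from the maximum-entropy bound for distributions on $\mathbb{N}$ with prescribed mean. This partition-entropy control is exactly what permits the conditional-entropy comparison needed to push Abramov's formula through the countable-to-one factor, and it is the technical heart of Zweim\"uller's result in \cite{Z}.
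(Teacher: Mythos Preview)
The paper does not prove this proposition at all: it is stated with attribution to Zweim\"uller \cite{Z} and immediately used. There is therefore no ``paper's own proof'' to compare against; your sketch is effectively a summary of the argument behind the cited reference rather than a reconstruction of anything in the present paper.

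That said, your outline is broadly on target. The integral identity is exactly the unfolding you describe, and your remark about normalization is pertinent: with the paper's liftability convention one has $\int\tau\,d\bar{\mu}=\mu(M)=1$, so the quotient form of the statement tacitly assumes $\bar{\mu}$ renormalized to a probability on $\mathcal{U}$. For the entropy part, the tower-plus-Abramov strategy is indeed the standard route; the one place where your wording is too quick is the claim that ``$\pi$ is countable-to-one and generating, hence entropy is preserved.'' Countable-to-one alone does not force $h_{\hat{\mu}}(\hat f)=h_{\mu}(f)$; what Zweim\"uller actually establishes is an isomorphism (mod $0$) between the natural extensions of $(\hat M,\hat f,\hat\mu)$ and $(M,f,\mu)$, from which equality of entropies follows. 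Your identification of the finite-entropy condition on the level partition $\{\tau=n\}$ as the key technical input is correct, but it feeds into that isomorphism argument rather than into a bare fiber-counting statement.
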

It follows from Proposition~\ref{pr.zwei} that
\[
h_{\overline{\mu}}(f) + \int \overline{\phi} d\overline{\mu} = \bigg{(}\int R d \overline{\mu}\bigg{)} \bigg{(}h_{\mu}(f) + \int \phi d\mu\bigg{)}.
\]
Given a Markov shift $(\Sigma,\sigma)$, we define the \emph{Gurevich Pressure} as
\[
\displaystyle P_{G}(\overline{\phi})= \lim_{n \to \infty} \frac{1}{n} \log \left(\sum_{\sigma^{n}(x) = x, x_{0} = a} e^{\overline{\phi}_{n}(x)} \right),
\]
where $\overline{\phi}_{n}(x) = \sum_{j=0}^{n - 1} \overline{\phi}(f^{j}(x)).$

\begin{theorem}[Sarig \cite{Sa1}]\label{Sarig} If $(\Sigma, \sigma)$ is topologically mixing and $\overline{\phi}$ is locally H\"older, then the Gurevich Pressure is well defined and
independent of $a$. 
Moreover,
\[
P_{G}(\overline{\phi}) = \sup \left\{ P_{\topo}(\overline{\phi}_{|Y}) \mid Y \subset \Sigma \,\, \text{is a topologically mixing finite Markov shift}\right\}.
\]
\end{theorem}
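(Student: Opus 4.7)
The strategy is to establish the three parts of the claim in a natural order: existence of the limit defining $P_G(\bar\phi)$, independence of the base state $a$, and finally the variational identity via approximation by finite Markov subshifts. Throughout, the locally Hölder condition $\sum_n V_n(\bar\phi)<\infty$ is what controls the concatenation and truncation errors.

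For existence, I would set $Z_n(a):=\sum_{\sigma^n(x)=x,\,x_0=a} e^{\bar\phi_n(x)}$ and show that $\log Z_n(a)$ is almost superadditive. Topological mixing at $a$ produces $p\in\mathbb{N}$ and an admissible word $w$ of length $p+1$ from $a$ back to $a$. Given periodic points $x$ of period $n$ and $y$ of period $m$, both based at $a$, the concatenation $x\cdot w\cdot y$ yields a periodic point $z$ of period $n+m+p$ based at $a$. Because $\bar\phi$ is locally Hölder, the Birkhoff sum along $z$ differs from $\bar\phi_n(x)+\bar\phi_m(y)+\bar\phi_p(w)$ by at most a constant $K$ depending only on $w$ and the variations $V_j(\bar\phi)$. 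Summing over all pairs $(x,y)$ gives
$$
Z_{n+m+p}(a)\;\geq\; e^{-K}\,Z_n(a)\,Z_m(a),
$$
so Fekete's subadditive lemma applied to $n\mapsto -\log Z_n(a)+K$ shows that $\lim_n \tfrac{1}{n}\log Z_n(a)$ exists in $[-\infty,\infty]$.

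For independence of $a$, pick any second state $b$ and use topological mixing to fix admissible words from $a$ to $b$ and from $b$ to $a$ of lengths $p_1$ and $p_2$. Any periodic orbit at $b$ is converted into a periodic orbit at $a$ of period increased by $p_1+p_2$; the same locally Hölder control shows that this operation alters the contribution to $Z_n(\cdot)$ by a bounded multiplicative factor. Dividing by $n$ and sending $n\to\infty$ gives equality of the two limits. For the variational identity, the direction $\geq$ is immediate: for any topologically mixing finite Markov subshift $Y\subset\Sigma$ containing some state $a$, every periodic orbit of $Y$ at $a$ is also a periodic orbit of $\Sigma$ at $a$, so $P_{\topo}(\bar\phi_{|Y})=P_G(\bar\phi_{|Y})\leq P_G(\bar\phi)$, using that on finite topologically mixing shifts the topological pressure agrees with Gurevich pressure by the classical theory. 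Taking the supremum over $Y$ yields the inequality.

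For the reverse inequality, I fix $\epsilon>0$, enumerate the alphabet $S=\{s_1,s_2,\dots\}$ with $s_1=a$, and let $Z_n^N(a)$ be the truncated partition function over periodic orbits visiting only $\{s_1,\dots,s_N\}$. Monotone convergence gives $Z_n^N(a)\uparrow Z_n(a)$, and the approximate superadditivity from Step~1 lets one choose $N=N(\epsilon)$ so that the truncated shift has exponential growth rate within $\epsilon$ of $P_G(\bar\phi)$. Since the resulting finite shift may be reducible, a Smale-type spectral decomposition extracts finitely many irreducible mixing components, one of which must realize the full growth rate up to $\epsilon$; this component is the desired $Y$. The main technical obstacle is precisely this last step: one must verify that restricting to an irreducible mixing sub-component of the truncation preserves the asymptotic exponential rate of $Z_n^N(a)$, and this is where summable variation is essential, since it guarantees that Birkhoff sums over long admissible words inside the mixing component still approximate those of the original truncated shift up to a subexponential factor.
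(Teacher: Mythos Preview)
The paper does not give its own proof of this theorem: it is stated with attribution to Sarig \cite{Sa1} and used as a black box, so there is no argument in the paper to compare against. Your sketch is essentially Sarig's original proof, and the overall architecture---almost superadditivity of $\log Z_n(a)$ via concatenation of periodic loops, symmetry in $a$ by bridging words, and approximation from below by finite-alphabet truncations---is correct.

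One point deserves tightening. In your final step you invoke ``approximate superadditivity from Step~1'' to pass from a single value $\tfrac{1}{n}\log Z_n^N(a)$ to the pressure of the truncated shift. What is actually needed is the observation that, because of the superadditivity inequality $Z_{n+m+p}(a)\ge e^{-K}Z_n(a)Z_m(a)$ (which holds equally well inside the finite truncation once $N$ is large enough to contain the bridging word $w$), the limit $\lim_n\tfrac{1}{n}\log Z_n^N(a)$ coincides with $\sup_n\tfrac{1}{n}(\log Z_n^N(a)-K)$ up to the fixed additive loss $K/n$. Hence a single large $n$ with $\tfrac{1}{n}\log Z_n^N(a)>P_G(\bar\phi)-\epsilon$ already forces $P_{\topo}(\bar\phi_{|Y_N})>P_G(\bar\phi)-\epsilon-K/n$. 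The spectral-decomposition manoeuvre you describe is then unnecessary in the form you state it: if $a$ lies in the truncated alphabet and the bridging word $w$ does too, the irreducible component of $Y_N$ containing $a$ is automatically mixing and already carries all the periodic orbits counted by $Z_n^N(a)$, so no further comparison of Birkhoff sums across components is required. Your closing sentence about summable variation controlling a ``subexponential factor'' between components is therefore superfluous and slightly misleading; the role of summable variation is confined to the concatenation estimates in Steps~1 and~2.
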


\begin{theorem}[Iommi-Jordan-Todd \text{\cite[Theorem 2.1]{IJT}}] \label{VarPrinc} 
	Let $(\Sigma,\sigma)$ be a countable full shift and 
	$\Phi : \Sigma \to \mathbb{R}$ a potential with summable variation and $\sup \Phi < \infty$. Then
	$$
	P_{G}(\Phi) = \sup \bigg{\{}h_{\nu}(F) + \int \Phi d \nu \mid -\int\Phi d\nu < \infty\bigg{\}}.
	$$
\end{theorem}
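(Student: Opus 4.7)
The plan is to prove the two inequalities of the variational principle separately, using Sarig's theorem (cited immediately above) as the bridge: it identifies $P_{G}(\Phi)$ with the supremum of topological pressures $P_{\topo}(\Phi_{|Y})$ over all finite topologically mixing Markov subshifts $Y \subset \Sigma$, which reduces the problem to classical variational arguments on compact subsystems.

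For the lower bound $P_{G}(\Phi) \leq \sup\{h_{\nu}(\sigma) + \int \Phi d\nu : -\int \Phi d\nu < \infty\}$, I would fix any finite topologically mixing Markov subshift $Y \subset \Sigma$ and apply Walters' classical variational principle to it: since $Y$ is compact and $\Phi_{|Y}$ is continuous and bounded, one gets
$$
P_{\topo}(\Phi_{|Y}) = \sup \bigg\{ h_{\nu}(\sigma) + \int \Phi d\nu : \nu \in \mathcal{M}_{\sigma}(Y) \bigg\}.
$$
Each such $\nu$ extends to an invariant measure on $\Sigma$ with $-\int \Phi d\nu < \infty$ (since $\Phi$ is bounded on $Y$). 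Taking the supremum over all finite mixing subshifts $Y$ and invoking Sarig's theorem yields the desired inequality.

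For the reverse direction, given an ergodic $\nu$ with $-\int \Phi d\nu < \infty$ and $\varepsilon > 0$, the idea is to approximate $\nu$ by invariant measures supported on finite mixing Markov subshifts. Fixing a cylinder $C_{a}$ of positive $\nu$-measure, I consider the first-return map $\sigma^{R}$ to $C_{a}$ and truncate the return time at level $N$; the admissible truncated returns generate a finite Markov sub-system $Y_{N}$. Summable variation of $\Phi$ provides bounded distortion for the induced potential, and combined with Kac's and Abramov's formulas this produces an invariant measure $\nu_{N}$ on $Y_{N}$ whose free energy $h_{\nu_{N}}(\sigma) + \int \Phi d\nu_{N}$ converges to $h_{\nu}(\sigma) + \int \Phi d\nu$ as $N \to \infty$. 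Combining with $P_{G}(\Phi) \geq P_{\topo}(\Phi_{|Y_{N}}) \geq h_{\nu_{N}}(\sigma) + \int \Phi d\nu_{N}$ and letting $\varepsilon \to 0$ and $\nu$ range over admissible measures then closes the argument.

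The main obstacle is this second direction, specifically the truncation when $\Phi$ is unbounded below and when $\nu$ is not supported on any compact subsystem. The hypothesis $-\int \Phi d\nu < \infty$ is exactly what allows the truncation error in $\int \Phi d\nu_{N}$ to be controlled by a uniformly integrable tail, while summable variation supplies the Gibbs-type distortion bounds needed to ensure $h_{\nu_{N}}(\sigma) \to h_{\nu}(\sigma)$ via Rokhlin's formula on the induced system. Because these estimates are technical, rather than reproving them we invoke the Iommi--Jordan--Todd theorem \cite{IJT} as a black box.
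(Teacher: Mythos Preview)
The paper does not prove this statement at all: it is stated as a cited result (Theorem 2.10 of Iommi--Jordan--Todd \cite{IJT}) and used as a black box, with no proof or sketch given. Your proposal, which ultimately also invokes \cite{IJT} as a black box after sketching the standard two-inequality strategy via Sarig's approximation by finite subshifts, is therefore consistent with the paper's treatment; the sketch you provide is extra content beyond what the paper contains, but since you explicitly defer the technical truncation estimates to \cite{IJT} there is no discrepancy to flag.
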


For the potential $\varphi  = \phi - P_{f}(\phi)$ we have $P_{f}(\varphi) = 0$. 

\begin{proposition}\label{FiniteSup}
	If  $\varphi$ is a potential such that $\overline{\varphi}$ is locally H\"older  and $P_{f}(\varphi)=0$, then   $\sup \overline{\varphi}< \infty$.
\end{proposition}

\begin{proof}
	Since $P_{f}(\varphi)=0$, the Variational Principle for compact sets (see \cite{OV},\cite{W}) implies that
	\[
	0 = \sup_{\eta \in \mathcal{M}_{f}(M)} \bigg{\{} h_{\eta}(f) + \int \varphi d\eta \bigg{\}}.
	\]
	Once $h_{\eta}(f) \geq 0$ for every probability $\eta \in \mathcal{M}_{f}(M)$, it means that $\int \varphi d\eta \leq 0$ for every probability $\eta \in \mathcal{M}_{f}(M)$. The Abramov's Formulas imply that $\int \overline{\varphi} d\overline{\eta} \leq 0$ for every probability $\eta \in \mathcal{M}_{f}(M)$. 
	
	We claim that in the inducing scheme $(F,\mathcal{P})$ that we are considering, the partition $\mathcal{P} = \{P_{1},\dots,P_{n},\dots\}$ is such that each element $P_{i}$ has a point $x_{i}$ such that $\overline{\varphi}(x_{i}) \leq 0$. In fact, given $P_{i} \in \mathcal{P}$, there exists $x_{i} \in P_{i}$ such that $F(x_{i}) = x_{i}$. It means that $x_{i} \in M$ is a periodic point for the map $f$ with period $R(x_{i})$. For the probability
	\[
	\eta_{i} := \frac{1}{R(x_{i})}\sum_{j=0}^{R(x_{i}) - 1}\delta_{f^{j}(x_{i})}
	\]
	we have that $\overline{\eta}_{i} = \frac{1}{R(x_{i})}\delta_{x_{i}}$ and $\int \overline{\varphi} d\overline{\eta}_{i} \leq 0$ implies that $\overline{\varphi}(x_{i}) \leq 0$.
	Since $\overline{\varphi}$ is locally H\"older, which implies that
	\[
	V_{1}(\overline{\varphi}) : = \sup\left\{|\overline{\varphi}(x) - \overline{\varphi}(y)| \colon  x, y \in P_{i}\right\} \leq A\theta.
	\]
	But we obtain $\overline{\varphi}(x)\leq \overline{\varphi}(x) -\overline{\varphi}(x_{i}) \leq A\theta$, for every $x \in P_{i}$, for every $i \in \mathbb{N}$. It means that $\sup \overline{\varphi} \leq A\theta$.
\end{proof}

\begin{proposition}
If  $\varphi$ is a potential as in Proposition \ref{FiniteSup} such that $P_{f}(\varphi)=0$, then   $P_{G}(\overline{\varphi}) = 0$.
\end{proposition}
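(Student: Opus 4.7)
The plan is to establish $P_G(\bar{\varphi_t}) = 0$ by proving two matching inequalities, both of which hinge on Abramov's formulas (Proposition \ref{pr.zwei}): whenever an $f$-invariant probability $\mu$ lifts to an $F$-invariant $\bar{\mu}$,
$$
h_{\bar{\mu}}(F) + \int \bar{\varphi_t}\, d\bar{\mu} = \Bigl(\int \tau\, d\bar{\mu}\Bigr)\Bigl(h_{\mu}(f) + \int \varphi_t\, d\mu\Bigr).
$$

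For the upper bound $P_G(\bar{\varphi_t}) \leq 0$, I would invoke the finite Markov subshift representation of Gurevich pressure in Theorem \ref{Sarig}. For any topologically mixing finite Markov subshift $Y \subset \Sigma$, the inducing time $\tau$ is bounded on $Y$, so every $F$-invariant measure $\nu$ supported in $Y$ has $\int \tau\, d\nu < \infty$, and the standard projection formula produces an $f$-invariant $\mu$ lifting to $\nu$. Abramov then gives
$$
h_\nu(F) + \int \bar{\varphi_t}\, d\nu = \Bigl(\int \tau\, d\nu\Bigr)\Bigl(h_\mu(f) + \int \varphi_t\, d\mu\Bigr) \leq \Bigl(\int \tau\, d\nu\Bigr) P_f(\varphi_t) = 0.
$$
Taking the supremum over such $\nu$, and then over $Y$, yields $P_G(\bar{\varphi_t}) \leq 0$.

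For the reverse inequality, I would use the Markov structure of Theorem \ref{A} (combined with Theorem \ref{Pinheiro}) to select a sequence of ergodic measures $\mu_n$ realising $P_f(\varphi_t) = 0$, liftable to a single induced scheme $(F,\mathcal{P})$ with $\int \tau\, d\bar{\mu_n}$ uniformly bounded. Abramov then converts the convergence $h_{\mu_n}(f) + \int \varphi_t\, d\mu_n \to 0$ into $h_{\bar{\mu_n}}(F) + \int \bar{\varphi_t}\, d\bar{\mu_n} \to 0$, precisely because the prefactor $\int \tau\, d\bar{\mu_n}$ stays bounded. Applying the Iommi--Jordan--Todd variational principle (whose hypotheses are satisfied since $\bar{\varphi_t}$ is locally H\"older by the preceding lemma and $-\int \bar{\varphi_t}\, d\bar{\mu_n} < \infty$) gives $P_G(\bar{\varphi_t}) \geq h_{\bar{\mu_n}}(F) + \int \bar{\varphi_t}\, d\bar{\mu_n}$ for every $n$, forcing $P_G(\bar{\varphi_t}) \geq 0$.

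The main technical obstacle I anticipate is verifying the integrability hypothesis $-\int \bar{\varphi_t}\, d\bar{\mu_n} < \infty$ demanded by Iommi--Jordan--Todd: $\phi_t = -t \log J_{\mu} f$ is bounded above but not a priori bounded below, so one must carefully combine bounded distortion on zooming pre-balls (inducing times are zooming times, so $\log J_{\mu} F$ is controlled in terms of $\tau$) with the uniform control on $\int \tau\, d\bar{\mu_n}$ furnished by Theorem \ref{Pinheiro} to secure the needed finiteness. Mirroring the analogous step for hyperbolic potentials in \cite{AOS} should close this gap.
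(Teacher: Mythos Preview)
Your proposal is correct and follows essentially the same two-inequality route via Abramov's formulas that the paper adopts by deferring to \cite{AOS} (with Theorem~\ref{A} replacing Theorem~\ref{Pinheiro}). The integrability concern you flag is in fact resolved more directly than you suggest: since $h_{\mu_n}(f)+\int\varphi_t\,d\mu_n\to 0$ with $0\le h_{\mu_n}(f)\le h(f)<\infty$, the integrals $\int\varphi_t\,d\mu_n$ are eventually bounded, and Abramov then gives $-\int\bar{\varphi_t}\,d\bar{\mu_n}<\infty$ immediately without any appeal to bounded distortion.
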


\begin{proof}
See proof in \cite{AOS}[Proposition 4]. It is the same in our context by using Theorem \ref{A} instead of Theorem \ref{Pinheiro}, Theorem \ref{VarPrinc} here instead of Theorem 3 in \cite{AOS} and Proposition \ref{FiniteSup} to give the upper bound for $\overline{\varphi}$. \\
\end{proof}

Let $T=(t_{ij})$ be the transition matrix of the shift $(\Sigma,\sigma)$. We say that it  has the \emph{big images and preimages (BIP) property} if 
\[
\exists \,\, b_{1},\dots,b_{N} \in S \,\, \text{such that} \,\, \text{for all} \, \, \, a \in S \,\, \exists \,\, i,j \in \{1,\dots,N\} \,\, \text{such that} \,\, t_{b_{i}a}t_{ab_{j}} = 1.
\]
Clearly, if $(\Sigma,\sigma)$ is a full shift, then it has the BIP property.
We are going    to use   Sarig's results on the existence and uniqueness of conformal measures, Gibbs measures and equilibrium states for countable Markov shifts  \cite{Sa1,Sa2,Sa3}; see also \cite{Sa4}. They can   be summarized as follows.

\begin{theorem}[Sarig \cite{Sa4}]
Let $(\Sigma,\sigma)$ be topologically mixing and $\Phi$ have  summable variation. Then $\Phi$ has an invariant Gibbs measure 
$\mu_{\Phi}$ if, and only if, it has the BIP property and $P_{G}(\Phi) < \infty$. Moreover, the Gibbs measure has the following properties:

\begin{enumerate}

\item If $h_{\mu_{\Phi}} (\sigma) < \infty$ or $-\int \Phi d \mu_{\Phi} < \infty$,
then $\mu_{\Phi}$ is the unique equilibrium state
(in particular, $P_{G}(\Phi) = h_{\mu_{\Phi}}(\sigma) + \int \Phi d\mu_{\Phi}$).

\item $\mu_{\Phi}$ is finite and $d \mu_{\Phi} = h_{\Phi} d m_{\Phi}$, where $L(h_{\Phi}) =
\lambda h_{\overline{\varphi }}$ and $L^{*}(m_{\Phi}) = \lambda m_{\Phi}$, $\lambda = e^{P_{G}(\Phi)}$. 
This means that $m(\sigma(A)) = \int_{A} e^{\Phi - \log \lambda} dm_{\Phi}$.

\item  $h_{\Phi}$ is unique and $m_{\Phi}$ is the unique $(\Phi - \log \lambda)$-conformal probability measure.
\end{enumerate}

\end{theorem}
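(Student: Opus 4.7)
The plan is to use the Ruelle--Perron--Frobenius (transfer) operator
\[
(L_\Phi g)(x) = \sum_{\sigma(y) = x} e^{\Phi(y)} g(y),
\]
with the target being $\lambda := e^{P_G(\Phi)}$ as its spectral radius on a suitable cone of locally H\"older positive functions. The Gibbs measure will arise as $d\mu_\Phi = h_\Phi\, dm_\Phi$, where $L_\Phi h_\Phi = \lambda h_\Phi$ and $L_\Phi^{*} m_\Phi = \lambda m_\Phi$; all three conclusions of the theorem will fall out of the simultaneous construction and uniqueness of this pair $(h_\Phi, m_\Phi)$.

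For sufficiency (BIP, summable variation, and $P_G(\Phi) < \infty$ produce a Gibbs measure), the first step is to use summable variation to obtain uniform distortion along cylinders: for $x, y$ in the same $n$-cylinder, $|S_n\Phi(x) - S_n\Phi(y)| \leq \sum_{k \geq 1} V_k(\Phi) < \infty$. The BIP symbols $b_1, \dots, b_N$ then allow one to compare partition functions across different cylinders up to a multiplicative constant, which prevents mass from leaking to infinity. I would construct $m_\Phi$ as a weak-$\ast$ accumulation point of normalized iterates $\lambda^{-n} L_\Phi^{\ast n} \delta_x$ (equivalently, of normalized periodic-orbit sums appearing in the Gurevich partition function), the limit being taken along cylinders through the BIP symbols where distortion yields uniformly summable bounds, and then checking the eigenequation by a Schauder-type argument on a suitable simplex. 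The eigenfunction $h_\Phi$ is built by a parallel argument, $h_\Phi = \lim_n \lambda^{-n} L_\Phi^n \mathbf{1}_{[b_i]}$, and bounded distortion yields $c^{-1} \leq h_\Phi \leq c$. The Gibbs property of $\mu_\Phi = h_\Phi m_\Phi$ then follows directly from $m_\Phi(\sigma(A)) = \int_A e^{\Phi - \log \lambda}\, dm_\Phi$ on cylinders and the distortion bound. For necessity, summing the Gibbs inequality $c^{-1} e^{-Pn + S_n\Phi(x)} \leq \mu_\Phi(C_n) \leq c\, e^{-Pn + S_n\Phi(x)}$ over $n$-cylinders makes $P_G(\Phi) < \infty$ immediate, and the two-sided bound forces an approximate full-shift structure on the support, from which BIP is extracted.

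For (1), uniqueness as an equilibrium state under $h_{\mu_\Phi}(\sigma) < \infty$ or $-\int \Phi\, d\mu_\Phi < \infty$, I would invoke the Iommi--Jordan--Todd variational identity already cited above to conclude $P_G(\Phi) = h_{\mu_\Phi}(\sigma) + \int \Phi\, d\mu_\Phi$, and then show that any other equilibrium measure $\nu$ satisfies $\nu \ll \mu_\Phi$ with constant density by checking that both measures satisfy the same Gibbs-type inequality and passing through the ergodic decomposition. For (3), uniqueness of $h_\Phi$ (up to scalar) and of $m_\Phi$ follows from standard cone contraction: the normalized operator $\lambda^{-1} L_\Phi$ is a contraction in Birkhoff's projective metric on the cone of locally H\"older positive functions (the summable variation hypothesis is what makes the contraction work), so the eigenspace is one-dimensional, and any $(\Phi - \log\lambda)$-conformal probability must satisfy the dual eigenequation, which pins $m_\Phi$ down.

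The main obstacle is the construction of $m_\Phi$ in the countable-state setting. For a finite alphabet the simplex of probability measures is weak-$\ast$ compact and Schauder's theorem essentially hands one the eigenmeasure; with a countable alphabet the simplex is not weak-$\ast$ compact and mass can escape to infinity under the dual operator. The role of BIP is exactly to give a finite cylinder $\bigcup_i [b_i]$ to which the operator iterates must return with controlled relative mass, and the delicate technical point is to set up an approximation by finite subshifts that exhausts $\Sigma$ while preserving distortion bounds uniformly; this is where the interaction between summable variation and BIP is essential, and where the proof genuinely differs from the classical finite-state Ruelle--Perron--Frobenius theory.
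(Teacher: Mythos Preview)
The paper does not prove this theorem at all: it is stated as a result of Sarig and cited to his lecture notes \cite{Sa4}, with no accompanying proof or even a sketch. It is used as a black box to produce the Gibbs/equilibrium/conformal measure for the induced system. So there is no ``paper's own proof'' against which to compare your attempt.

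That said, your outline is a reasonable summary of how Sarig's argument actually goes in \cite{Sa1,Sa2,Sa3,Sa4}. A couple of calibrations: Sarig does not construct $m_\Phi$ directly as a weak-$\ast$ limit of $\lambda^{-n}L_\Phi^{\ast n}\delta_x$; rather he first establishes a Generalized Ruelle--Perron--Frobenius theorem classifying $\Phi$ as transient, null recurrent, or positively recurrent, and then shows that BIP together with $P_G(\Phi)<\infty$ forces positive recurrence, which is what yields the eigenmeasure and a bounded-away-from-zero eigenfunction. The uniqueness of the equilibrium state in part (1) is not done via cone contraction but by an entropy/relative-entropy argument (any competing equilibrium measure would have to be absolutely continuous with respect to $\mu_\Phi$ with log-density forcing equality). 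Your identification of the essential obstacle---loss of compactness in the countable alphabet and the role of the BIP symbols in preventing escape of mass---is exactly right, and is the heart of the matter.
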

Here $L_{\Phi}$ denotes the Transfer Operator:
\[
L_{\Phi}h = \sum_{y \in \sigma^{-1}(x)} e^{\Phi(y)}h(y).
\]

We obtain a unique equilibrium state 
$\mu_{\overline{\varphi}}$, which is also a Gibbs measure, where the potential $\varphi$ is taken as in Proposition \ref{FiniteSup}.
We  need to show that the inducing time is integrable in order to project this Gibbs measure. Later, we will spread the conformal measure for the original map. Since $P_{G}(\overline{\varphi}) = 0$, we have that $\log \lambda = 0$ and $m_{\overline{\varphi}}$ is a $\overline{\varphi}$-conformal probability measure. See definition of $\overline{\varphi}$ in \ref{indpot}.

\section{Finiteness of ergodic equilibrium states}\label{Finiteness}

In this section, we briefly state the results that can be found in [\cite{AOS}, section 3], where the authors extend the technique of Iommi and Todd in \cite{IT2} to prove that the inducing time is integrable with respect to the Gibbs measure $\mu_{\overline{\varphi}} $.
As a consequence, we can project it to a measure $\mu_{\varphi}$, which will be   an equilibrium state for the original system $(f,\varphi)$. It can be done for each inducing scheme of the Markov structure. So, at the end we obtain finitely many equilibrium states.

We will state results  adapted from \cite{IT2}  whose proofs follow exactly along the same lines.

The strategy is, firstly, to show that for a certain inducing scheme, a measure with low free energy is far from being a Gibbs measure \textit{(Proposition \ref{Gibbs})}. As a consequence, the accumulation point, in the weak-* topology, of a sequence of measures with free energy converging to zero (pressure) will provide us a Gibbs measure, which is $\mu_{\overline{\varphi}}$, by uniqueness. This measure is an equilibrium state and, by the Abramov's formulas, can be projected to an equilibrium state $\mu_{\varphi}$ \textit{(Proposition \ref{Finitely})}.

\subsection{Measures with low free energy}

The following proposition shows that, when we choose a suitable $k \in \mathbb{N}$, the inducing scheme for $F^{k}$ is such that the following property holds: measures with low free energy cannot be Gibbs measures. It can be seen by comparing it with $\mu_{\overline{\varphi}}$.

\begin{proposition}
\label{Gibbs}
Given a potential $\varphi$ with $P_{f}(\varphi)=0$ and an inducing scheme $(\tilde{F},\mathcal{P})$, there exists $k \in \mathbb{N}$ such that replacing $(\tilde{F},\mathcal{P})$ by $(F,\mathcal{P})$, where $F=\tilde{F}^{k}$, the following holds: there exists $\gamma_{0} \in (0,1)$ and for any cylinder $C_{n} \in \mathcal{P}_{n}^{F}$ and $n \in \mathbb{N}$ there exists   $\delta_{n} < 0$ such that any measure $\mu_{F} \in \mathcal{M}_{F}$ with
\[
\mu_{F}(C_{n}) \leq (1 - \gamma_{0}) m_{\overline{\varphi}}(C_{n}) \quad\text{or}\quad m_{\overline{\varphi}}(C_{n}) \leq (1 - \gamma_{0}) \mu_{F}(C_{n}), 
\]
where $m_{\overline{\varphi}}$ is the conformal measure for  $(F,\overline{\varphi})$, 
satisfies $h_{\mu_{F}}(F) + \int \overline{\varphi} d \mu_{F} < \delta_{n}$.
\end{proposition}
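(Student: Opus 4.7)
The plan is to follow the Iommi--Todd strategy from \cite{IT2}. The key idea is to bound the free energy of an arbitrary $F$-invariant probability $\mu_F$ from above by $\log K$ minus a Kullback--Leibler divergence between $\mu_F$ and the Gibbs--conformal measure $m_{\bar\varphi}$ on the partition $\mathcal{P}_n$; the hypothesis that $\mu_F(C_n)$ and $m_{\bar\varphi}(C_n)$ differ by at least a factor $\gamma_0$ then forces this divergence to be bounded below by a strictly positive Bernoulli KL gap. The role of the acceleration $F = \tilde{F}^k$ is to ensure that, after amplifying the free energy by a factor $k$, the distortion constant is overtaken by this KL gap.

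First I would use $h_{\mu_F}(F) \le \frac{1}{n} H_{\mu_F}(\mathcal{P}_n)$ together with the $F$-invariance of $\mu_F$ to write
\[
h_{\mu_F}(F) + \int \bar\varphi\,d\mu_F \;\le\; \tfrac{1}{n}\Bigl[\,H_{\mu_F}(\mathcal{P}_n) + \int S_n\bar\varphi\,d\mu_F\,\Bigr].
\]
Since $P_G(\bar\varphi) = 0$, the Gibbs property of $m_{\bar\varphi}$ (with constant $K$), combined with the summable variation given by the locally H\"older lemma above, gives for every $C \in \mathcal{P}_n$ and every $x \in C$ the one-sided bound $S_n\bar\varphi(x) \le \log m_{\bar\varphi}(C) + \log K$. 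Integrating against $\mu_F$ and collecting the entropy term produces
\[
H_{\mu_F}(\mathcal{P}_n) + \int S_n\bar\varphi\,d\mu_F \;\le\; \log K - D_{\mathcal{P}_n}\!\bigl(\mu_F \,\|\, m_{\bar\varphi}\bigr),
\]
where $D_{\mathcal{P}_n}$ denotes the KL divergence on the finite partition $\mathcal{P}_n$.

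Next I would extract a quantitative lower bound on $D_{\mathcal{P}_n}(\mu_F \,\|\, m_{\bar\varphi})$ from the cylinder hypothesis. By the data-processing monotonicity of the relative entropy, coarsening $\mathcal{P}_n$ to the two-atom partition $\{C_n, C_n^c\}$ yields
\[
D_{\mathcal{P}_n}(\mu_F\,\|\,m_{\bar\varphi}) \;\ge\; d_B\bigl(\mu_F(C_n),\,m_{\bar\varphi}(C_n)\bigr),
\]
where $d_B$ is the Bernoulli KL divergence between the two Bernoulli distributions. A short convexity calculation shows that under either side of the standing hypothesis this quantity is bounded below by some $\Delta_n(\gamma_0) > 0$ depending only on $\gamma_0$ and on $m_{\bar\varphi}(C_n)$.

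Combining the two bounds gives
\[
h_{\mu_F}(F) + \int \bar\varphi\,d\mu_F \;\le\; \tfrac{1}{n}\bigl[\log K - \Delta_n(\gamma_0)\bigr].
\]
Passing from $\tilde F$ to the iterate $F = \tilde F^k$ multiplies the free energy on the left by $k$ while the Gibbs distortion constant on the resulting Markov partition grows only in a controlled (logarithmic) way; for $k$ large enough, the right-hand side becomes a strictly negative number $\delta_n < 0$, valid uniformly under either branch of the hypothesis, as required. The main obstacle will be keeping $\gamma_0$ independent of $n$ while $\Delta_n(\gamma_0)$ degenerates as $m_{\bar\varphi}(C_n) \to 0$; handling this small-cylinder regime is precisely the role of the acceleration parameter $k$, together with the Gibbs lower bound $m_{\bar\varphi}(C) \ge K^{-1}e^{S_n\bar\varphi(x)}$ that prevents cylinders from being too small relative to the Birkhoff sum of the potential.
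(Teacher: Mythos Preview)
Your proposal follows exactly the Iommi--Todd strategy that the paper invokes (the paper gives no argument of its own, deferring to \cite{IT2} and \cite{AOS}), and the two main ingredients---the entropy/KL inequality
\[
h_{\mu_F}(F)+\int\bar\varphi\,d\mu_F\;\le\;\tfrac1n\bigl[\log K-D_{\mathcal P_n}(\mu_F\|m_{\bar\varphi})\bigr]
\]
coming from the Gibbs upper bound, and the data-processing reduction to the two-atom partition $\{C_n,C_n^c\}$---are stated correctly.

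The one point that needs sharpening is your account of the role of the acceleration $F=\tilde F^{\,k}$. The statement is formulated \emph{after} passing to $F$, for $F$-invariant (not $\tilde F$-invariant) measures $\mu_F$; there is therefore no ``multiplying the free energy on the left by $k$'' in the proof itself. Moreover, the Gibbs constant does not grow under iteration: an $n$-cylinder for $F$ is a $kn$-cylinder for $\tilde F$, and the Gibbs inequality at level $kn$ for $(\tilde F,\bar\varphi)$ is exactly the Gibbs inequality at level $n$ for $(F,S_k\bar\varphi)$, with the \emph{same} constant $K$. So the heuristic ``$k$ amplifies the left side faster than $\log K$ grows'' is not the actual mechanism. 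In \cite{IT2} the iterate is used instead to force the conformal measure of every $1$-cylinder of the accelerated scheme to be small (uniformly $<1/2$, say), which is what allows one to choose a single $\gamma_0\in(0,1)$, independent of the cylinder, for which the Bernoulli gap $d_B\bigl(\mu_F(C_n),m_{\bar\varphi}(C_n)\bigr)$ exceeds $\log K$. You have correctly identified that the small-cylinder regime is where the difficulty lies; just be sure your write-up reflects the correct mechanism when you fill in the details from \cite{IT2}.
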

\begin{proof}
See proof in \cite{AOS} or \cite{IT2}.
\end{proof}

\subsection{Integrability of the inducing time}

To finish the proof of Theorem A we will use the following proposition.

\begin{proposition}
\label{Finitely}
Given a pseudo-geometric potential $\varphi$ with $P_{f}(\varphi)=0$, there exist finitely many ergodic equilibrium states for the system $(f,\varphi)$ and they are zooming.
\end{proposition}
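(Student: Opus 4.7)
The plan is to combine the Gibbs/equilibrium state $\mu_{\bar{\varphi}}$ from the symbolic side with Abramov's formula, using Proposition \ref{Gibbs} as a rigidity tool to identify weak-$*$ limits, and using the uniform bound on the integral of the inducing time from Theorem \ref{Pinheiro}/Theorem \ref{A} to guarantee integrability in the limit. I begin with an ergodic sequence $\mu_n$ of $f$-invariant probabilities with $h_{\mu_n}(f) + \int \varphi\, d\mu_n \to P_f(\varphi) = 0$. For $n$ large the free energy exceeds $P_f(\varphi, \Lambda^c)$ (the pressure is hyperbolic for $\varphi$, by the proposition on geometric/hyperbolic potentials), so each $\mu_n$ is expanding and, by Theorem \ref{A}, lifts to one of the finitely many inducing schemes in the Markov structure. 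Passing to a subsequence, all $\mu_n$ lift to the same scheme $(F,\mathcal{P})$, and after replacing $F$ by a suitable iterate as in Proposition \ref{Gibbs}, the corresponding lifts $\bar{\mu}_n$ satisfy the hypotheses of that proposition.

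The next step is to take a weak-$*$ accumulation point $\bar{\mu}_\infty$ of $\bar{\mu}_n$ on the symbolic space. By Abramov (Proposition \ref{pr.zwei}), the free energy of $\bar{\mu}_n$ for $\bar{\varphi}$ equals $\big(\int \tau\, d\bar{\mu}_n\big)\big(h_{\mu_n}(f) + \int \varphi\, d\mu_n\big)$, which tends to zero only if multiplied by a vanishing factor; combined with Proposition \ref{Gibbs} applied cylinder by cylinder, this forces $\bar{\mu}_\infty(C_n)$ to be squeezed between $(1-\gamma_0)m_{\bar{\varphi}}(C_n)$ and $(1-\gamma_0)^{-1}m_{\bar{\varphi}}(C_n)$ for every cylinder. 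Since cylinders generate the Borel $\sigma$-algebra and $\mu_{\bar{\varphi}}$ is the unique Gibbs measure (hence uniquely determined by the bounded-ratio property with $m_{\bar{\varphi}}$), we conclude $\bar{\mu}_\infty = \mu_{\bar{\varphi}}$.

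The crucial technical step, and what I expect to be the main obstacle, is showing that the inducing time $\tau$ is integrable with respect to $\mu_{\bar{\varphi}}$, because without this we cannot apply Abramov in the reverse direction to project. The argument is that Theorem \ref{Pinheiro}/Theorem \ref{A} provides a uniform bound $\int \tau\, d\bar{\mu}_n \leq K$, and $\tau$, although unbounded, can be truncated as $\tau \wedge N$; weak-$*$ convergence on each bounded continuous function $\tau \wedge N$ gives $\int (\tau \wedge N)\, d\mu_{\bar{\varphi}} \leq K$, and monotone convergence then yields $\int \tau\, d\mu_{\bar{\varphi}} \leq K < \infty$. (One should handle the fact that $\tau$ is constant on partition elements to avoid continuity issues, but this is routine for the locally constant structure of $\mathcal{P}$.) With integrability in hand, Abramov projects $\mu_{\bar{\varphi}}$ to an $f$-invariant probability $\mu_\varphi$ with $h_{\mu_\varphi}(f) + \int \varphi\, d\mu_\varphi = P_{G}(\bar{\varphi})/\int \tau\, d\mu_{\bar{\varphi}} = 0 = P_f(\varphi)$, so $\mu_\varphi$ is an equilibrium state.

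Finally, since every ergodic equilibrium state is expanding (by the hyperbolicity of $\varphi$ established earlier), it lifts to one of the finitely many inducing schemes of the Markov structure of Theorem \ref{A}, and on each such scheme uniqueness of the symbolic Gibbs measure (Sarig) produces at most one projected ergodic equilibrium state. Hence there are at most finitely many ergodic equilibrium states, all of them expanding, completing the proposition. The ergodicity of $\mu_\varphi$ follows from the ergodicity of $\mu_{\bar{\varphi}}$ (Gibbs measures are ergodic for topologically mixing Markov shifts with BIP), transferred downstairs by the standard ergodic decomposition argument for Abramov projections.
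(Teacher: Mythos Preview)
Your proof is correct and follows essentially the same approach as the paper, which likewise takes a sequence $\mu_n$ with free energy tending to zero, lifts to a common scheme, identifies the weak-$*$ limit of the lifts with $\mu_{\bar\varphi}$ via Proposition~\ref{Gibbs}, and projects using Abramov after establishing integrability of $\tau$ from the uniform bound in Theorem~\ref{A}. The only step you leave implicit that the paper singles out is tightness of $\{\bar\mu_n\}$ (Lemma~\ref{Tight}), needed on the non-compact symbolic space to guarantee that weak-$*$ accumulation points exist and are genuine probability measures; this follows from the same uniform bound $\int\tau\,d\bar\mu_n\le K$ that you already invoke.
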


In order to prove the above proposition, we take a sequence of $f$-invariant measures $\{\mu_{n}\}_{n}$ such that $h_{\mu_{n}}(f) + \int \varphi d \mu_{n} \to 0$ and liftable 
with respect to the same inducing scheme. We will show that the set of lifted measure is tight (see definition below and \cite{OV} for details)
and has the Gibbs measure as its unique accumulation point with respect to which the inducing time is integrable.
Finally, the Gibbs measure is projected to an equilibrium state for $(f,\varphi)$.

We say that a set of measures $\mathcal{K}$ on $X$ is \textbf{\textit{tight}} if for every $\epsilon > 0$ there exists a compact set $K \subset X$ 
such that $\eta(K^{c}) < \epsilon$ for every measure $\eta \in \mathcal{K}$.
\begin{lemma}\label{Tight}
The lifted sequence $\{\overline{\mu}_{n}\}_{n}$  is tight.
\end{lemma}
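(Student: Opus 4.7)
The strategy is to compare the lifted measures $\bar{\mu}_n$ with the unique Gibbs probability $m_{\bar{\varphi}}$ cylinder by cylinder via Proposition \ref{Gibbs}, and then use the fact that $m_{\bar{\varphi}}$ itself concentrates on a finite sub-family of the partition up to any prescribed error. The cylinder comparison will transfer this concentration to the whole sequence, producing the compact set needed for tightness.

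First I would normalize: set $\tilde{\mu}_n := \bar{\mu}_n/\bar{\mu}_n(\mathcal{U})$. Theorem \ref{Pinheiro} yields $\int \tau\,d\bar{\mu}_n$ uniformly bounded, so the normalizing constants lie in a compact subinterval of $(0,\infty)$, and tightness of $\{\bar{\mu}_n\}$ is equivalent to that of $\{\tilde{\mu}_n\}$. Abramov's formula (Proposition \ref{pr.zwei}) combined with the hypothesis $h_{\mu_n}(f)+\int\varphi\,d\mu_n \to 0 = P_f(\varphi)$ gives
\[
h_{\tilde{\mu}_n}(F) + \int \bar{\varphi}\,d\tilde{\mu}_n \longrightarrow P_G(\bar{\varphi}) = 0.
\]

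Next I would invoke Proposition \ref{Gibbs} for the induced scheme (replacing $F$ by an appropriate power $\tilde{F}^k$ if necessary): for every partition element $P_i\in\mathcal{P}$, either $\tilde{\mu}_n(P_i)$ lies in the two-sided interval $[(1-\gamma_0)m_{\bar{\varphi}}(P_i),(1-\gamma_0)^{-1}m_{\bar{\varphi}}(P_i)]$, or else $h_{\tilde{\mu}_n}(F) + \int \bar{\varphi}\,d\tilde{\mu}_n < \delta_1$ for some fixed $\delta_1 < 0$. The convergence above excludes the second alternative for each fixed $i$ and all $n$ sufficiently large. Given $\epsilon>0$, choose $\gamma_0$ small enough and then (possible because $m_{\bar{\varphi}}$ is a probability on the countable collection $\mathcal{P}$) a finite subfamily $\{P_{i_1},\dots,P_{i_N}\}$ with $m_{\bar{\varphi}}\bigl(\bigcup_{j}P_{i_j}\bigr)>1-\epsilon/2$. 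Summing the lower comparison over $j$,
\[
\tilde{\mu}_n\Bigl(\bigcup_{j=1}^{N} P_{i_j}\Bigr) \;\ge\; (1-\gamma_0)\Bigl(1-\tfrac{\epsilon}{2}\Bigr) \;>\; 1-\epsilon
\]
for every $n \ge n_0(\epsilon)$. The set on the left is a clopen cylinder specified by a finite alphabet in the zeroth coordinate and, by $F$-invariance of $\tilde{\mu}_n$ together with the shift structure, provides a compact subset of the symbolic space on which the mass is uniformly controlled; the finitely many exceptional indices $n<n_0$ are tight individually as Borel probabilities on a Polish space. This gives tightness of $\{\tilde{\mu}_n\}$, and hence of $\{\bar{\mu}_n\}$.

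\textbf{The main obstacle} is the quantitative matching between the Gibbs threshold $\gamma_0$ supplied by Proposition \ref{Gibbs} (a priori a single fixed constant attached to the inducing scheme) and the accuracy $\epsilon$ one wishes to achieve: to make the step above effective, one must be able to take $\gamma_0$ as small as desired, which requires passing to a sufficiently high iterate $F=\tilde{F}^k$ or to sufficiently deep cylinders of $\mathcal{P}$, exactly in the spirit of Iommi–Todd. Once this quantitative control is secured, the remainder of the argument is the bookkeeping sketched above.
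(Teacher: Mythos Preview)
Your approach is correct and is essentially the one the paper defers to (the argument in \cite{AOS}, which in turn follows \cite{IT2}): compare level-one cylinders via Proposition~\ref{Gibbs}, use that the free energies tend to zero to force the Gibbs comparison for all large $n$, and then invoke $F$-invariance to upgrade finite-alphabet control on the zeroth coordinate to a genuinely compact subset of the shift space. Your identified obstacle---matching the fixed $\gamma_0$ to an arbitrary $\epsilon$---is indeed the one technical wrinkle, and the resolution you point to (passing to deeper cylinders or a higher iterate so that the effective Gibbs distortion constant can be taken arbitrarily close to~$1$) is exactly how it is handled in \cite{IT2}; note that $\gamma_0$ itself is not a free parameter, so the phrase ``choose $\gamma_0$ small enough'' should be replaced by this refinement.
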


\begin{proof}
See proof in \cite{AOS}[Lemma 6]. It is the same in our context by using Theorem \ref{A} instead of Theorem \ref{Pinheiro}.
\end{proof} 

\begin{proof}[Proof of Proposition~\ref{Finitely}]
See proof in \cite{AOS}.
\end{proof}

As there exists an equilibrium state, we also can find an ergodic one. Also, if $\nu$ is an ergodic equilibrium state, we can lift it to an equilibrium state for the shift, which is the Gibbs measure $\mu_{\overline{\varphi}}$. So, the projection of it is $\nu$. It shows that there exists at most one ergodic equilibrium state for each inducing scheme. Then, they are, at most, finitely many. 

\section{Uniqueness of Equilibrium State}\label{Uniqueness}

In the previous section we obtained finiteness of equilibrium states and we will show that we can obtain uniqueness under the conditions of Theorem \ref{B}.

\begin{theorem}\label{uniqueness}
	Let us assume that there exist infinitely many ergodic zooming measures whose supports are pairwise disjoints. For every pair of ergodic zooming equilibrium states $\mu_{1}$ and $\mu_{2}$ for the zooming potential $\phi$ whose induced potential is locally Hölder, we show that they are liftable to the same inducing scheme and we obtain uniqueness of equilibrium state with no requirement of transitivity.
\end{theorem}
\begin{proof}
	We already have finiteness of equilibrium states. The Markov structure provides at most one equilibrium state for each inducing scheme. We are going to show that the equilbrium states are liftable to the same inducing schemes in the Markov structure, under the given hypothesis.
	
	The construction of the Markov structure is made by taking a  cover of the space  by open balls with radius $r>0$ and a finite subcover $\{B_{r}(p_{1}),\dots, B_{r}(p_{k})\}$ which provides another subcover by  nested sets $B_{r}^{*}(p_{i}) \subset B_{r}(p_{i}) $ which contains $B_{r/2}(p_{i})$ with $B_{r}(p_{i}) \cap B_{r/2}(p_{j}) = \emptyset, i \neq j, i,j \leq k$.
	
	Let $\mu_{1}, \mu_{2}$ be ergodic equilibrium states for the zooming potential $\phi$. The equilibrium states are zooming measures: $\mu_{1}(\Lambda) = 1 = \mu_{2}(\Lambda)$, where $\Lambda$ is the zooming set. By \cite{Pi1}[Lemma 3.9] there exist compact sets $A_{\mu_{1}}$ and $A_{\mu_{2}}$ such that $\omega_{f,+}(x) = A_{\mu_{1}}$ $\mu_{1}$-almost every $x \in M$ and $\omega_{f,+}(y) = A_{\mu_{2}}$ $\mu_{2}$-almost every $y \in M$. Set 
	\[
	B_{\mu_{1}} = \{w \in M \mid \omega_{f,+}(w) = A_{\mu_{1}}\}, B_{\mu_{2}} = \{w \in M \mid \omega_{f,+}(w) = A_{\mu_{2}}\} \implies \mu_{1}(B_{\mu_{1}}) = 1 = \mu_{2}(B_{\mu_{2}}),
	\]
	Assume that $\mu_{1} \neq \mu_{2}$ and $B_{\mu_{1}} \cap B_{\mu_{2}} = \emptyset$.  Once there exist finitely many ergodic equilibrium states, we can take an ergodic zooming measure $\nu$ which is not an ergodic equilibrium state. We can also consider the set
	\[
	B_{\nu} = \{w \in M \mid \omega_{f,+}(w) = A_{\nu}\} \implies \nu(B_{\nu}) = 1
	\]
	and denoting $\mu : = \mu_{1}$ define the function
	\[
	\varphi = \phi + \chi_{(B_{\mu} \cup B_{\nu})}.
	\]
	Denoting by $\chi_{A}$ the characteristic function of the measurable set $A$, we have
	\[
	\int \varphi \cdot \chi_{A} d\mu =  \int \phi \cdot \chi_{A} d\mu + \mu(A), \int \varphi \cdot \chi_{A} d\nu =  \int \phi \cdot \chi_{A} d\nu + \nu(A) \implies
	\]
	\[
	\int_{A} (\varphi - \phi) d\mu = \mu(A),  \int_{A} (\varphi - \phi) d\nu = \nu(A).
	\]
	Taking $A = B_{\mu} \cup B_{\nu}$, we have $\mu(A) = 1 = \nu(A)$ and for $\nu$-almost every $y \in M$ we obtain
	\[
	(*) \lim_{n \to \infty} \frac{1}{n}\sum_{i=0}^{n-1}(\varphi - \phi)\cdot \chi_{A}(f^{i}(y)) = \int_{A} (\varphi - \phi) d\nu = \int_{A} (\varphi - \phi) d\mu =
	\]
	\[
	\int (\varphi - \phi) d\mu = \int (\varphi - \phi) d\nu = \lim_{n \to \infty} \frac{1}{n}\sum_{i=0}^{n-1}(\varphi - \phi)(f^{i}(y)) = 1.
	\]
	Taking $y \in M\backslash(B_{\mu}\cup B_{\nu})$, it implies that $(\varphi - \phi)(f^{i}(y)) = 0$ for every $i \in \mathbb{N}$ and then the equality $(*)$ holds for every $y \in B_{\mu}\cup B_{\nu}$ and not for $y \in M\backslash(B_{\mu}\cup B_{\nu})$.
	
	Analogously, taking $A = \Lambda$ and $y \in M\backslash \Lambda$, it implies that $(\varphi - \phi)(f^{i}(y)) = 0$ for every $i \in \mathbb{N}$ and then the equality $(*)$ holds for $\nu$ -almost every $y \in \Lambda$ and not for $y \in M\backslash \Lambda$. We obtain $\Lambda = \overline{B_{\mu}} \cup \overline{B_{\nu}}$ in the induced topology on $\Lambda$ because $\Lambda$ also has $\nu$-full measure.
	
	 We can repeat the idea for $\mu: = \mu_{2}$ and every ergodic zooming measure $\nu$ which is not an equilibrium state, obtaining $\Lambda =  \overline{B_{\mu_{1}}} \cup \overline{B_{\nu}} =  \overline{B_{\mu_{2}}} \cup \overline{B_{\nu}}$. Supposing $ \overline{B_{\mu_{1}}} \cap \overline{B_{\mu_{2}}} = \emptyset$, we conclude that $\Lambda = \overline{B_{\nu}}$ for every ergodic zooming measure $\nu$ which is not an equilibrium state. It is a contradiction because $A_{\nu} \subset \text{supp} \nu$ for every ergodic measure $\nu$. Hence, $ \overline{B_{\mu_{1}}} \cap \overline{B_{\mu_{2}}} \neq \emptyset$. As $B_{\mu_{1}} \cap B_{\mu_{2}} = \emptyset$ we get the compact invariant set $\partial B_{\mu_{1}} \cap \partial B_{\mu_{2}}$ supporting an ergodic zooming measure $\eta$ and containing $A_{\eta}$. We also have points $w_{1} \in B_{\mu_{1}}$ and  $w_{2} \in B_{\mu_{2}}$ such that $\omega_{f,+}(w_{1}) = A_{\mu_{1}} = A_{\eta} = A_{\mu_{2}} = \omega_{f,+}(w_{2})$. It is true because the restriction of a map to the support of an ergodic measure is transitive. Then, some orbit in $B_{\mu_{1}}$ and $B_{\mu_{2}}$ accumulates on $A_{\eta}$ at zooming times.
	
	Once $A_{\mu_{1}} = A_{\mu_{2}}$, as a consequence both measures $\mu_{1}$ and $\mu_{2}$ are liftable to the same inducing scheme. As we can see in the proof of \cite{Pi1}[Theorem D] the measures are liftable to every inducing scheme which intersects this compact set. It implies that $\mu_{1} = \mu_{2}$. Hence, the uniqueness is established.
\end{proof}

\section{Hyperbolic Potentials}\label{Hyperbolic}

In this section, we extend the notion of hyperbolic potentials which appears in \cite{AOS} for exponential contractions and for continuous maps to the general case of $f:M \to M$ being a zooming map and  the contraction $(\alpha_{n})_{n}$ satisfying $\alpha_{n}(r) \leq ar$ for some $a \in (0,1)$, every $n \in \mathbb{N}$ and every $r \in [0,+\infty)$ (Lipschitz contractions, for example). Also, we show that hyperbolic potentials are \textbf{equivalent} to continuous zooming potentials and give an example of a class of hyperbolic potentials. By proving that the null potential is zooming we obtain, in particular, the existence and uniqueness of measures of maximal entropy for zooming maps with general contractions, answering this question for the important class of maps known as Viana maps.

We stress the fact that in the case of a zooming map with non dense zooming set we can consider this section for the respective open zooming system.

We begin by recalling what we mean by relative pressure, which is a notion of pressure for non-compact sets, as the case of zooming sets in general.

\subsection{Topological pressure}

We recall the definition of relative pressure for non-compact sets by dynamical balls, as it is given in \cite{ARS}.
Let $M$ be a compact metric space. Consider $f:M \to M$ and $\phi: M \to \mathbb{R}$. Given $\delta > 0$, $n \in \mathbb{N}$ and $x \in M$, we define the 
\textbf{\textit{dynamical ball}} $B_{\delta}(x,n)$ as 
\[
B_{\delta}(x,n): = \{y \in M |d(f^{i}(x),f^{i}(y)) < \delta, \,\, \text{for} \,\, 0 \leq i \leq n\}.
\]
Consider for each $N \in \mathbb{N}$, the set
\[
\mathcal{F}_{N} = \left\{B_{\delta}(x,n) |x \in M, n \geq N\right\}.
\]
Given $\Lambda \subset M$, denote by $\mathcal{F}_{N}(\Lambda)$ the finite or countable families of elements in $\mathcal{F}_{N}$ that cover $\Lambda$. Define for $n \in \mathbb{N}$
\[
S_{n}\phi(x) = \phi(x) + \phi(f(x)) + \dots + \phi(f^{n-1}(x)).
\]
and
\[
\displaystyle R_{n,\delta}\phi(x) = \sup_{y \in B_{\delta}(x,n)} S_{n}\phi(y).
\]
Given a $f$-invariant set $\Lambda \subset M$, not necessarily compact, define for each $\gamma > 0$
\[
\displaystyle m_{f}(\phi, \Lambda, \delta, N, \gamma) = \inf_{\mathcal{U} \in \mathcal{F}_{N}(\Lambda)} \left\{ \sum_{B_{\delta}(y,n) \in \mathcal{U}}
e^{-\gamma n + R_{n,\delta}\phi(y)} \right\}.
\]
Define also
\[
\displaystyle m_{f}(\phi, \Lambda, \delta, \gamma) = \lim_{N \to + \infty} m_{f}(\phi, \Lambda, \delta, N, \gamma).
\]
and
\[
P_{f}(\phi, \Lambda, \delta) = \inf \{\gamma > 0 |m_{f}(\phi, \Lambda, \delta, \gamma) = 0\}.
\]
Finally, define the \textbf{\textit{relative pressure}} of $\phi$ on $\Lambda$ as
\[
P_{f}(\phi,\Lambda) = \lim_{\delta \to 0} P_{f}(\phi, \Lambda, \delta).
\]
The \textbf{\textit{topological pressure}} of $\phi$ is, by definition, $P_{f}(\phi) = P_{f}(\phi, M)$ and satisfies
\begin{eqnarray}
	\label{Pressures}
	P_{f}(\phi) = \sup \{P_{f}(\phi,\Lambda), P_{f}(\phi,\Lambda^{c})\}
\end{eqnarray}
where $\Lambda^{c}$ denotes the complement of $\Lambda$ on $M$. We refer the reader to \cite{Pe2} for the proof of~\eqref{Pressures} and for additional properties of the pressure.
See also \cite{W} for a proof of the fact that
\[
\displaystyle P_{f}(\phi) = \sup_{\mu \in \mathcal{M}_{f}(M)} \bigg{\{} h_{\mu}(f) + \int \phi d \mu \bigg{\}}.
\]

\subsection{Hyperbolic potentials}

Given a continuous zooming map $f:M \to M$ with general contractions, we say that a   continuous function $\phi : M \to \mathbb{R}$ is a \emph{hyperbolic potential} if the topological pressure $P_{f}(\phi)$ is located on the zooming set $\Lambda$, i.e.
\[
P_{f}(\phi,\Lambda^{c}) < P_{f}(\phi).
\]
This notion is extends the notion of hyperbolic potential in \cite{RV}, since they define the expanding set from an average and, later on, they prove the property of expansion with neighbourhoods by proving a distortion control in Lemma 3.6 and that the expanding set is a zooming set.

In \cite{IRL} I. Inoquio-Renteria and J. Rivera-Letelier use the term hyperbolic potential for the first time. As in \cite{LRL}, where H. Li and J. Rivera-Letelier consider other type of hyperbolic potentials for one-dimensinal dynamics. In their context, $\phi$ is a hyperbolic potential if
\begin{equation}\label{Li-Rivera}
\displaystyle \sup_{\mu \in \mathcal{M}_{f}(M)} \int \phi d\mu < P_{f}(\phi).	
\end{equation}
We claim that these type of hyperbolic potentials are zooming. When the map is one-dimensional, a measure being zooming (or expanding) means that the Lyapunov exponent is positive. Otherwise, it is negative or zero. By Ruelle's inequality, we obtain that the entropy is negative or zero. In this case, for a measure $\mu$ that is not zooming, we obtain
\[
\displaystyle \sup_{\mu \in \mathcal{Z}(\Lambda)^{c}} \bigg{\{} h_{\mu}(f) + \int \phi d \mu \bigg{\}} \leq \sup_{\mu \in \mathcal{Z}(\Lambda)^{c}}\bigg{\{}\int \phi d \mu \bigg{\}} \leq 
\]
\[
\sup_{\mu \in \mathcal{M}_{f}(M)} \int \phi d\mu < P_{f}(\phi)=\sup_{\mu \in \mathcal{Z}(\Lambda)} \bigg{\{} h_{\mu}(f) + \int \phi d \mu \bigg{\}}.	
\]
It means that this type of potential is zooming (and hyperbolic, as we will see in this section) as defined above and we can use our Theorem \ref{B} to obtain finitely many ergodic equilibrium states which are zooming measures.

Another type of hyperbolic potentials considered, is taking $\phi$ such that
\begin{equation}\label{Przytycki-Rivera}
\sup \phi < P_{f}(\phi).
\end{equation}
We can easily see that condition \ref{Przytycki-Rivera} implies \ref{Li-Rivera}.
We observe that every hyperbolic potential in our context is zooming, as we can see in the following proposition.

\begin{proposition}\label{HZ}
	Let $\phi$ be  a hyperbolic potential. If $\mu$ is an ergodic probability measure such that $h_{\mu}(f) + \int \phi d\mu > P_{f}(\phi,\Lambda^{c})$, then $\mu(\Lambda)=1$.
\end{proposition}

\begin{proof}
	Since $\Lambda$ is an invariant set and $\mu$ is an ergodic probability measure, we have $\mu(\Lambda) = 0$ or $\mu(\Lambda) = 1$. Since the potential $\phi$ is ergodic, we get
	\[
	h_{\mu}(f) + \int \phi d\mu > P_{f}(\phi,\Lambda^{c}) \geq \sup_{\nu(\Lambda^{c})=1}
	\bigg{\{}h_{\nu}(f) + \int_{\Lambda^{c}} \phi d\nu\bigg{\}}.
	\]
	(For the second inequality, see  \cite[Theorem A2.1]{Pe2})
	So, we cannot have $\mu(\Lambda^{c}) = 1$ and we obtain $\mu(\Lambda) = 1$, i.e. $\mu$ is an zooming measure.
	
\end{proof}

The main result of this section is the following theorem, which establishes the equivalence between hyperbolic and zooming potentials.

\begin{theorem}\label{HYPERZOOM}
	Let $f:M \to M$  a continuous zooming map and  the contraction $(\alpha_{n})_{n}$ satisfying $\alpha_{n}(r) \leq ar$ for some $a \in (0,1)$, every $n \in \mathbb{N}$ and every $r \in [0,+\infty)$ (Lipschitz contractions, for example)  and $\phi : M \to \mathbb{R}$ a continuous potential. Then $\phi$ is a hyperbolic potential if, and only if, it is a zooming potential.
\end{theorem}

We divide the proof of Theorem \ref{HYPERZOOM} into some Lemmas. The first Lemma proves that both the sets of hyperbolic and continuous zooming potentials are open in the topology of the supremum norm.

\begin{lemma} \label{OPEN}
	Let $f:M \to M$ a continuous zooming map and the contraction $(\alpha_{n})_{n}$ satisfying $\alpha_{n}(r) \leq ar$ for some $a \in (0,1)$, every $n \in \mathbb{N}$ and every $r \in [0,+\infty)$ (Lipschitz contractions, for example). Denote by $\mathcal{HP}$ the set of hyperbolic potentials and $\mathcal{ZP}$ the set of continuous zooming potentials. We have that both sets $\mathcal{HP}$ and $\mathcal{ZP}$ are open in the topology of the supremum norm $\parallel \cdot \parallel_{\infty}$. 
\end{lemma}
\begin{proof}
	Firstly, we prove that the set $\mathcal{HP}$ is open, by using the following property of the relative pressure given in \cite{Pe2}. For every pair of continuous potentials $\phi, \varphi: M \to \mathbb{R}$, we have that
	\[
	|P(\phi,X) - P(\varphi,X)| \leq \parallel \phi - \varphi \parallel_{\infty}, \,\, X \subset M.
	\]
	So, by assuming that $\phi$ is hyperbolic, we have that $P(\phi,\Lambda^{c}) < P(\phi, \Lambda)$ and, taking $0 < 2\epsilon < P(\phi, \Lambda) - P(\phi,\Lambda^{c})$ and $\varphi$ such that $\parallel \phi - \varphi \parallel_{\infty} < \epsilon$, we have 
	\[
	P(\varphi, \Lambda) - P(\varphi,\Lambda^{c}) = (P(\varphi, \Lambda) - P(\phi,\Lambda)) + (P(\phi,\Lambda) - P(\phi,\Lambda^{c})) + (P(\phi,\Lambda^{c}) - P(\varphi,\Lambda^{c})) >
	\]
	\[
	-\parallel \phi - \varphi \parallel_{\infty} + P(\phi,\Lambda) - P(\phi,\Lambda^{c}) - \parallel \phi - \varphi \parallel_{\infty} > P(\phi,\Lambda) - P(\phi,\Lambda^{c}) - 2\epsilon > 0.
	\]
	It implies that $\varphi$ is hyperbolic and, as a consequence, we have the ball $B_{\epsilon}(\phi) \subset \mathcal{HP}$. It shows that the set $\mathcal{HP}$ is open.
	Now, in order to show that the set $\mathcal{ZP}$ is open, we assume that the potential $\phi$ is zooming, that is,
	\[
	\displaystyle  \sup_{\eta \in \mathcal{Z}(\Lambda)^{c}} \bigg{\{} h_{\eta}(f) + \int \phi d\eta \bigg{\}} < \sup_{\eta \in \mathcal{Z}(\Lambda)} \bigg{\{} h_{\eta}(f) + \int \phi d\eta \bigg{\}}.
	\]
	So, taking
	\[
	\displaystyle 0 < 2\epsilon < \sup_{\eta \in \mathcal{Z}(\Lambda)} \bigg{\{} h_{\eta}(f) + \int \phi d\eta \bigg{\}} - \sup_{\eta \in \mathcal{Z}(\Lambda)^{c}} \bigg{\{} h_{\eta}(f) + \int \phi d\eta \bigg{\}},
	\]
	and $\varphi$ such that $\parallel \phi - \varphi \parallel_{\infty} < \epsilon$ we have
	\[
	\displaystyle  \sup_{\eta \in \mathcal{Z}(\Lambda)} \bigg{\{} h_{\eta}(f) + \int \varphi d\eta \bigg{\}} - \sup_{\eta \in \mathcal{Z}(\Lambda)^{c}} \bigg{\{} h_{\eta}(f) + \int \varphi d\eta \bigg{\}} = \sup_{\eta \in \mathcal{Z}(\Lambda)} \bigg{\{} h_{\eta}(f) + \int \varphi d\eta \bigg{\}} -
	\]
	\[
	- \sup_{\eta \in \mathcal{Z}(\Lambda)} \bigg{\{} h_{\eta}(f) + \int \phi d\eta \bigg{\}} +	\displaystyle  \sup_{\eta \in \mathcal{Z}(\Lambda)} \bigg{\{} h_{\eta}(f) + \int \phi d\eta \bigg{\}} - \sup_{\eta \in \mathcal{Z}(\Lambda)^{c}} \bigg{\{} h_{\eta}(f) + \int \phi d\eta \bigg{\}} +
	\]
	\[
	+ \sup_{\eta \in \mathcal{Z}(\Lambda)^{c}} \bigg{\{} h_{\eta}(f) + \int \phi d\eta \bigg{\}} - \sup_{\eta \in \mathcal{Z}(\Lambda)^{c}} \bigg{\{} h_{\eta}(f) + \int \varphi d\eta \bigg{\}} > 
	\]
	\[
	\displaystyle \sup_{\eta \in \mathcal{Z}(\Lambda)} \bigg{\{} h_{\eta}(f) + \int \phi d\eta \bigg{\}} - \sup_{\eta \in \mathcal{Z}(\Lambda)^{c}} \bigg{\{} h_{\eta}(f) + \int \phi d\eta \bigg{\}} - 2\epsilon > 0.
	\]
	We used that $\parallel \phi - \varphi \parallel_{\infty} < \epsilon$ implies $\int \varphi d\eta - \int \phi d\eta > -\epsilon$ and also $\int \phi d\eta - \int \varphi d\eta > -\epsilon$. We obtained that $\varphi$ is zooming and, as a consequence, we have the ball $B_{\epsilon}(\phi) \subset \mathcal{ZP}$. It shows that the set $\mathcal{ZP}$ is open.
	\end{proof}

We observe that Proposition \ref{HZ} guarantees that $\mathcal{HP} \subset \mathcal{ZP}$. In order to prove Theorem \ref{HYPERZOOM} it remains to show that $\mathcal{ZP} \subset \mathcal{HP}$. In the next Lemma, we show this by showing that $\mathcal{ZP} \subset \overline{\mathcal{HP}}$ and using that $\mathcal{HP} \subset \mathcal{ZP}$ are open sets.

\begin{lemma}\label{ZOOMHYPER}
	With the notation of Lemma \ref{OPEN}, we have that $\mathcal{ZP} \subset \overline{\mathcal{HP}}$. 
\end{lemma}
   \begin{proof}
   	Let $\phi \in \mathcal{ZP}$ be a Hölder potential, which exists because $\mathcal{ZP}$ is open and the set of Hölder functions are dense in the set of continuous functions. By Theorem \ref{B}, there exist finitely many equilibrium states, which are also zooming measures. Let $\mu_{0}$ be an equilibrium state. If $\phi \in \mathcal{ZP} \backslash \mathcal{HP}$, we have the following inequalities
   	\[
   	P(\phi,\Lambda) \leq P(\phi,\Lambda^{c}) = P(\phi) = h_{\mu_{0}} + \int \phi d\mu_{0} \leq \sup_{\eta \in \mathcal{Z}(\Lambda)} \bigg{\{} h_{\eta}(f) + \int \phi d\eta \bigg{\}} \leq P(\phi,\Lambda),  
   	\]
    because $\mu_{0}$ is zooming and the last inequality is a consequence of  \cite[Theorem A2.1]{Pe2}. Hence, we obtained that $P(\phi,\Lambda) = P(\phi,\Lambda^{c}) = P(\phi)$. Once the relative pressure is continuous ($|P(\phi,X) - P(\varphi,X)| \leq \parallel \phi - \varphi \parallel_{\infty}, \,\, X \subset M$), $\overline{\mathcal{HP}}$ is the set of continuous potentials $\varphi$ such that $P(\varphi,\Lambda^{c}) \leq P(\varphi,\Lambda)$. We conclude that $\phi \in \overline{\mathcal{HP}}$. Then, every Hölder zooming potential is in $\overline{\mathcal{HP}}$. By denseness of the Hölder functions, we have that $\mathcal{ZP} \subset \overline{\mathcal{HP}}$, as we wished.
   \end{proof}
 
 By Lemma \ref{ZOOMHYPER}, we have that $\mathcal{ZP} \subset \overline{\mathcal{HP}}$. Since $\mathcal{ZP}$ is an open set, we conclude that $\mathcal{ZP} \subset \mathcal{HP}$ and, finally, $\mathcal{ZP} = \mathcal{HP}$, concluding the proof of Theorem \ref{HYPERZOOM}.

\begin{remark}
	We observe that the set $\mathcal{ZP}$ could be empty, but we have that it contains the null potential $\phi \equiv 0$ as can be seen in Theorem \ref{ZER} next.
\end{remark}

\begin{theorem}\label{ZER}
	Let $f:M \to M$ a continuous zooming map and  the contraction $(\alpha_{n})_{n}$ satisfying $\alpha_{n}(r) \leq ar$ for some $a \in (0,1)$, every $n \in \mathbb{N}$ and every $r \in [0,+\infty)$ (Lipschitz contractions, for example) such that the topological entropy is positive ($h(f) > 0$). Then the null potentials $\phi \equiv 0$ is a zooming potential (and also hyperbolic). In particular, by Theorem \ref{B} there exist finitely many ergodic measures of maximal entropy which are zooming measures and we have uniqueness by Theorem \ref{uniqueness}.
\end{theorem}
\begin{proof}
	In order to prove Theorem \ref{ZER}, we use Theorem \ref{C} on the existence of pseudo-geometric zooming potentials, which has equilibrium states which are zooming measures. 
		
	We recall that $\mathcal{Z}(\Lambda)$ denotes the set of zooming invariant measures. Assume, by contradiction, that
	\[
	\sup_{\mu \in \mathcal{Z}(\Lambda)}\{h_{\mu}(f)\} \leq \sup_{\nu \in \mathcal{Z}(\Lambda)^{c}}\{h_{\nu}(f)\}.
	\]
	Let $\phi_{0}$ be a pseudo-geometric zooming potential given in Theorem \ref{C}. There exists an ergodic equilibrium state $\mu \in \mathcal{Z}(\Lambda)$. We  have that $\mu$ is also an equilibium state for $\phi: = \phi_{0} - P(\phi_{0})$ which implies that $P(\phi) = 0$ and $h_{\mu}(f) + \int \phi d\mu = 0$ and $-h_{\mu}(f) = \int \phi d\mu \leq 0$. Assume that there exists $\nu_{0} \in \mathcal{Z}(\Lambda)^{c}$ such that $h_{\mu}(f) < h_{\nu_{0}}(f)$. Since every equilibrium state is a zooming probability, we have that $\nu_{0}$ is not an equilibrium state for $\phi$ and
	\[
	h_{\nu_{0}}(f) + \int \phi d\nu_{0} < h_{\mu}(f) + \int \phi d\mu = 0,
	\]
	which implies that $\int \phi d\nu_{0} < \int \phi d\mu = -h_{\mu}(f)$ because $h_{\mu}(f) < h_{\nu_{0}}(f)$.  For every $t \in (0,1)$ and putting $\nu = (1-t)\mu + t\nu_{0}$ we still have that $h_{\mu}(f) < h_{\nu}(f)$ (we observe that $h_{\nu}(f) = (1-t)h_{\mu}(f) + th_{\nu_{0}}(f)$, because the entropy is an affine function, see \cite[Proposition 9.6.1]{OV}). Let $\epsilon > 0$ and $t \in (0,1), n \in \mathbb{N}$ are chosen such that such that $0 < h_{\nu}(f) - h_{\mu}(f) -\epsilon/n$ and  
	\[ 
	\int \phi d\mu > \int \phi d\nu > \int \phi d\mu - \epsilon \implies 
	- \epsilon(n-1)/n < h_{\nu}(f) - h_{\mu}(f) -\epsilon < h_{\nu}(f) + \int \phi d\nu < 0,
	\]
	using that $\int \phi d\mu = -h_{\mu}(f)$ . It is possible because once $\mu$ is an equilibrium state we have $h_{\nu_{0}}(f) - h_{\mu}(f) < \int \phi d\mu - \int \phi d\nu_{0}$ and there exist $t \in (0,1), n \in \mathbb{N}$ such that
	\[
	\epsilon/n < h_{\nu}(f) - h_{\mu}(f) = t(h_{\nu_{0}}(f) - h_{\mu}(f)) < t\bigg{(}\int \phi d\mu - \int \phi d\nu_{0}\bigg{)} = \int \phi d\mu - \int \phi d\nu < \epsilon.
	\]
	We obtain a contradiction because $\nu \in \mathcal{Z}(\Lambda)^{c}$ and $\epsilon > 0$ is arbitrary and the potential $\phi$ is zooming. It shows that we have
	\[
	\sup_{\nu \in \mathcal{Z}(\Lambda)^{c}}\{h_{\nu}(f)\} \leq h_{\mu}(f).
	\]
	If $\mu_{0}$ is an equilibrium state which is not a measure of maximal entropy, then we have
	\[
	\sup_{\nu \in \mathcal{Z}(\Lambda)^{c}}\{h_{\nu}(f)\} \leq h_{\mu_{0}}(f) < \sup_{\mu \in \mathcal{Z}(\Lambda)}\{h_{\mu}(f)\} = h(f).
	\]
	Hence, the null potential is zooming. As a consequence of Theorem \ref{HYPERZOOM}, we also have that
	\[
	P(0,\Lambda^{c}) < P(0,\Lambda),
	\]
	and the null potential is hyperbolic.
\end{proof}

\begin{corollary}\label{BIR}
	Let $\phi: M \to \mathbb{N}$ with its Birkhoff sums uniformly bounded, that is, there exists $r > 0$ such that
	\[
	|S_{n}\phi(x)| < r, \text{for all} \, \, n \in \mathbb{N}, \text{for all} \, \, x \in M.
	\]
	Then, $\phi$ is a zooming (and hyperbolic) potential.
\end{corollary}
\begin{proof}
	By Birkhoff's Ergodic Theorem we have that $\int \phi d\eta = 0$ for every $f$-invariant probability $\eta$. By Theorem \ref{ZER} the null potential is zooming. It implies that 
	\[
	\displaystyle  \sup_{\eta \in \mathcal{Z}(\Lambda)^{c}} \bigg{\{} h_{\eta}(f) + \int \phi d\eta \bigg{\}} = \sup_{\eta \in \mathcal{Z}(\Lambda)^{c}} \bigg{\{} h_{\eta}(f) \bigg{\}} < \sup_{\eta \in \mathcal{Z}(\Lambda)} \bigg{\{} h_{\eta}(f) \bigg{\}} = \sup_{\eta \in \mathcal{Z}(\Lambda)} \bigg{\{} h_{\eta}(f) + \int \phi d\eta \bigg{\}}.
	\]  
	So, $\phi$ is a zooming (and hyperbolic) potential.
\end{proof}

\begin{remark}
	The previous lemma proved that the potential $\phi$ is zooming (and hyperbolic) if we have $\int \phi d \eta = 0$ for every invariant measure $\eta$. In particular, if we have the Birkhoff sums uniformly bounded.
\end{remark}

A very important class of zooming maps is the class of Viana maps, defined in  section \ref{Examples}. The problem concerning the existence and uniqueness of the measure of maximal entropy has been studied for several authors. In \cite{ALP} the authors prove that there exist at most countably many of them. In \cite{AOS} the authors prove existence and finiteness. A proof of existence and uniqueness is announced in \cite{PV} with a different approach. We obtain it as a corollary of Theorem \ref{ZER}.
\begin{corollary}
	Let $f:S^{1} \times I \to S^{1} \times I$ be a Viana map. There exists a unique measure of maximal entropy for $f$.
\end{corollary}
\begin{proof}
	We have that every Viana map is a zooming map with exponential contractions with positive topological entropy. The corollary follows by Theorem \ref{ZER} and \cite{Pi1}[Corollary 9.5] which guarantees the existence of infinitely many periodic repellers.
\end{proof}

\begin{remark}
	In \cite[Proposition 12.2]{ALP} the authors establish that for Viana maps we have the following result, among others: if $\mu$ is an $f$-invariant measure such that $h_{\mu}(f) \geq h_{SRB}(f)$ where $SRB$ denotes the unique SRB measure for Viana maps, then the measure $\mu$ is hyperbolic. It implies that
		\[
	\sup_{\nu \in \mathcal{Z}(\Lambda)^{c}}\{h_{\nu}(f)\} \leq \sup_{\mu \in \mathcal{Z}(\Lambda)}\{h_{\mu}(f)\} = h(f).
	\]
	If the inequality is strict, it means that the null potential is zooming. Otherwise, we still can take a sequence of zooming measures $\mu_{n}$ such that $h_{\mu_{n}}(f) \to h(f)$ and the proof to find equilibrium states proceeds analogously. We then find uniqueness of the measure of maximal entropy in any case.
\end{remark}

With Theorem \ref{ZER} and the next lemma, we can see that the constant potentials are all hyperbolic (and zooming).

\begin{lemma} 
	We have that $P_{\Lambda}(\phi + c) = P_{\Lambda}(\phi) + c$, for all potential $\phi$ and constant $c \in \mathbb{R}$. 
\end{lemma}

\begin{proof}
	Let $\psi := \phi + c$. We have  $R_{n,\delta}\psi(x) = R_{n,\delta}\phi(x) + nc$. So,
	\begin{eqnarray*}
	\displaystyle m_{f}(\psi, \Lambda, \delta, N, \gamma) & = & \inf_{\mathcal{U} \in \mathcal{F}_{N}(\Lambda)} \bigg{\{} \sum_{B_{\delta}(y,n) \in \mathcal{U}}
	e^{-\gamma n + R_{n,\delta}\psi(y)} \bigg{\}} \\
	= \inf_{\mathcal{U} \in \mathcal{F}_{N}(\Lambda)} \bigg{\{} \sum_{B_{\delta}(y,n) \in \mathcal{U}}
	e^{-(\gamma - c)n + R_{n,\delta}\phi(y)} \bigg{\}} & = & m_{f}(\phi, \Lambda, \delta, N, \gamma - c).
	\end{eqnarray*}
	It implies that $m_{f}(\psi, \Lambda, \delta, \gamma) = m_{f}(\phi, \Lambda, \delta, N, \gamma - c)$. If $\gamma > 0$ is such that 
	$m_{f}(\psi, \Lambda, \delta, \gamma)=0$, then $m_{f}(\phi, \Lambda, \delta, N, \gamma - c)=0 \Rightarrow \gamma - c \geq P_{\Lambda}(\phi,\delta)$ or
	$\gamma \geq P_{\Lambda}(\phi,\delta) + c \Rightarrow P_{\Lambda}(\psi,\delta) \geq P_{\Lambda}(\phi,\delta) + c$.
	
	In the other hand, if $m_{f}(\phi, \Lambda, \delta, N, \beta) = 0$, then $m_{f}(\psi, \Lambda, \delta, \beta + c) = 0$ and 
	$P_{\Lambda}(\psi,\delta) \leq \beta + c \Rightarrow P_{\Lambda}(\psi,\delta) \leq P(\phi,\delta) + c$. It means that
	$P_{\Lambda}(\psi,\delta) = P(\phi,\delta) + c$ or $P_{\Lambda}(\psi) = P(\phi) + c$.
\end{proof}

The previous lemma also shows that if $\phi$ is a hyperbolic (and zooming) potential, so is $\phi + c, \text{for all} \, \, c \in \mathbb{R}$. We can also obtain the following lemma.

\begin{lemma}
	If $\phi \leq \psi$, then $P_{\Lambda}(\phi) \leq P_{\Lambda}(\psi)$. 
\end{lemma}
\begin{proof}
	If $\phi \leq \psi$, we obtain $R_{n,\delta}\phi(x) \leq R_{n,\delta}\psi(x)$. So,
	\begin{eqnarray*}
	\displaystyle m_{f}(\phi, \Lambda, \delta, N, \gamma) & = & \inf_{\mathcal{U} \in \mathcal{F}_{N}(\Lambda)} \bigg{\{} \sum_{B_{\delta}(y,n) \in \mathcal{U}}
	e^{-\gamma n + R_{n,\delta}\phi(y)} \bigg{\}} \\ 
    \leq \inf_{\mathcal{U} \in \mathcal{F}_{N}(\Lambda)} \bigg{\{} \sum_{B_{\delta}(y,n) \in \mathcal{U}}
	e^{-\gamma n + R_{n,\delta}\psi(y)} \bigg{\}} & = & m_{f}(\psi, \Lambda, \delta, N, \gamma),
	\end{eqnarray*}
	which implies that
	\[
	\displaystyle m_{f}(\phi, \Lambda, \delta, \gamma) = \lim_{N \to + \infty} m_{f}(\phi, \Lambda, \delta, N, \gamma) \leq 
	\lim_{N \to + \infty} m_{f}(\psi, \Lambda, \delta, N, \gamma) = m_{f}(\psi, \Lambda, \delta, \gamma).
	\]
	It implies that $m_{f}(\phi, \Lambda, \delta, \gamma) = 0$ if $m_{f}(\psi, \Lambda, \delta, \gamma) = 0$, that is,
	\[
	P_{\Lambda}(\phi, \delta) = \inf \{\gamma  \mid m_{f}(\phi, \Lambda, \delta, \gamma) = 0\} \leq
	\inf \{\gamma  \mid m_{f}(\psi, \Lambda, \delta, \gamma) = 0\} = P_{\Lambda}(\psi, \delta).
	\]
	Finally,
	\[
	P_{\Lambda}(\phi) = \lim_{\delta \to 0} P_{\Lambda}(\phi, \delta) \leq \lim_{\delta \to 0} P_{\Lambda}(\psi, \delta) = P_{\Lambda}(\psi).
	\]
\end{proof}

With the previous lemma we can obtain the following examples.

\begin{example}
	Let $\varphi: M \to \mathbb{R}$ be a hyperbolic potential and $\phi:M \to \mathbb{R}$ such that
	\[
	\max \phi - \min \phi < P_{\Lambda}(\varphi) - P_{\Lambda^{c}}(\varphi). 
	\]
	It implies that
	\[
	P_{\Lambda^{c}}(\varphi + \phi)  \leq  P_{\Lambda^{c}}(\varphi + \max \phi) = P_{\Lambda^{c}}(\varphi) + \max \phi <
	\]
	\[
	< P_{\Lambda}(\varphi) + \min \phi = P_{\Lambda}(\varphi + \min \phi) \leq P_{\Lambda}(\varphi + \phi). 
	\]
	So, $\varphi + \phi$ is a hyperbolic potential.
	
	If $|t|\leq 1$, we also have 
	\[
	\max t\phi - \min t\phi < P_{\Lambda}(\varphi) - P_{\Lambda^{c}}(\varphi). 
	\]
	and $\varphi + t\phi$ is also a hyperbolic potential.
	
	In particular, since the null potential is hyperbolic, if we have
	\[
	\max \phi - \min \phi < P_{\Lambda}(0)=P(0)=h(f),
	\]
	then $\phi$ is also a hyperbolic potential.
\end{example}

\begin{example}
	Now, for Viana maps, we construct a potential with uniformly bounded Birkhoff sums. 
	
	Let $B$ be an open set and $V=f^{-1}(B)$ such that $V \cap B = \emptyset$ and $V \cap \mathcal{C} = \emptyset$, where $\mathcal{C}$ is the critical set. Let $\phi : \overline{B} \to \mathbb{R}$ be a $C^{\infty}$ function such that $\phi_{\mid \partial B} \equiv 0$ and we define a potential $\varphi : X \to \mathbb{R}$ as
	\[
	\varphi(x) =
	\left\{
	\begin{array}{cc}
		\phi(x), & \text{if} \,\, x  \in \displaystyle B \\
		-\phi(f(x)), &  \text{if} \,\, x \in V\\
		0, &  \text{if} \,\, x \in (V \cup B)^{c}\\    
	\end{array}
	\right.  
	\]
	\begin{claim}
		The Birkhoff sums $S_{n}\varphi$ are uniformly bounded.
	\end{claim}
	\begin{proof}
		For $x \in V$, we have that
		\begin{eqnarray*}
		S_{n}\varphi(x)  & = & \varphi(x) + \varphi(f(x)) + \dots + \varphi(f^{n-1}(x)) \\ 
		& = & -\phi(f(x)) + \phi(f(x)) + 0 + \dots + \phi(f^{n-1}(x)) \leq \sup\phi,
		\end{eqnarray*}
		For $x \in B$, we have
		\[
		S_{n}\varphi(x) = \varphi(x) + \varphi(f(x)) + \dots + \varphi(f^{n-1}(x)) \leq \sup\phi - \inf\phi,
		\]
		if $f^{n-1}(x) \in V$ and it is equal to $\phi(x)$, otherwise.
				
		For $x \in (V \cup B)^{c}$, we have at most the same estimate for $S_{n}\varphi(x)$ because the orbit of $x$ may intersect $V \cup B$.
	\end{proof}
	
	So, the Birkhoff sums are uniformly bounded and Lemma \ref{BIR} guarantees that $\varphi$ is hyperbolic. Moreover, $\varphi$ is H\"older, which means that we have existence and finiteness of equilibrium state.
\end{example}

\begin{remark}
We observe that this section is also developed for nonexponential contractions, that is, for general zooming systems with the mild condition $\alpha_{n}(r)\leq ar$ for some $a \in (0,1)$. In the case of exponential contractions, we emphasize the relation with the work in \cite{AOS}. The novelty here is the generality of contractions beyond the exponential context and the uniqueness of equilibrium state.
\end{remark}
	
\section{Pseudo-geometric Potentials}\label{Pseudo}

In this section, if $f:M \to M$ is a zooming map and if the contraction $(\alpha_{n})_{n}$ satisfies $\alpha_{n}(r) \leq ar$ for some $a \in (0,1)$, every $n \in \mathbb{N}$ and every $r \in [0,+\infty)$ (Lipschitz contractions, for example) and a zooming measure with bounded distortion, we show that the pseudo-geometric potential that we introduced in this paper is zooming for $t < t_{0}$ (defined below) and its induced potential is locally Hölder. 

\subsection{Pseudo-geometric potentials and zooming measures}

Now, we show that for $t < 0$ small enough a necessary condition for a measure to be an equilibrium state for the potential $\phi_{t} = -t \log J_{\mu} f$ is being zooming.

\begin{proposition}
	If $f:M \to M$ is a zooming map and if the contraction $(\alpha_{n})_{n}$ satisfies $\alpha_{n}(r) \leq ar$ for some $a \in (0,1)$, every $n \in \mathbb{N}$ and every $r \in [0,+\infty)$ (Lipschitz contractions, for example) and the Jacobian of the reference measure has bounded distortion, then an ergodic measure $\nu_{0}$ has free energy high enough for $\phi_{t} = -t \log J_{\mu}f $ with $t < t_{0} \leq 0$, where 
	\[
	\displaystyle t_{0} : = \max_{\mu_{0} \in \mathbb{A}} \bigg{\{} \frac{h(f)}{-\int \log J_{\mu} f d\mu_{0}} \bigg{\}} \leq 0,
	\]
	then $\nu_{0}$ is a zooming measure. In particular, the pseudo-geometric potential is zooming.
\end{proposition}

\begin{proof}
	In fact, let $\eta$ be an ergodic measure with $\eta(\Lambda) = 0$. Since $\log J_{\mu}f \leq 0$ on $\Lambda^{c}$, we have, for $t < t_{0} \leq 0$
	\begin{eqnarray*}
	h_{\eta}(f) + \int \phi_{t} d\eta  & = & h_{\eta}(f) -t \int \log J_{\mu} f d\eta = h_{\eta}(f) \\ 
	\leq h(f) < -t\int \log J_{\mu} f d\mu_{0} & = & \int -t \log J_{\mu} f d\mu_{0} \leq h_{\mu_{0}}(f) + \int \phi_{t} d \mu_{0}.
	\end{eqnarray*}
	It means that a measure must be zooming to have any chance of being an equilibrium state. In other words, under these assumptions the potential is zooming.
\end{proof}

\begin{lemma}
	If $f:M \to M$ is a zooming map and if the contraction $(\alpha_{n})_{n}$ satisfies $\alpha_{n}(r) \leq ar$ for some $a \in (0,1)$, every $n \in \mathbb{N}$ and every $r \in [0,+\infty)$ (Lipschitz contractions, for example), then the induced pseudo-geometric potential $\overline{\phi}_{t}$ is locally H\"older.
\end{lemma}

\begin{proof}
	Given a zooming system $f: M \to M$ with contractions as above and a measure with bounded distortion, we have that, $\text{for all} \, \, y,z \in V_{n}(x)$, where $n$ is a zooming time for $x$ and $V_{n}(x)$ is a zooming pre-ball, the following properties, $\text{for all} \, \, y,z \in V_{n}(x)$, by the law of expansion and bounded distortion:
	\[
	d(f^{i}(y),f^{i}(z)) \leq \sigma^{n-i}(d(f^{n}(y),f^{n}(z))), \,\, \text{for all} \, \, \,\, 0 \leq i \leq n.
	\]
	\[
	\exists \rho>0 \,\, \text{such that} \,\,\mid \log  J_{\mu}f^{n}(y)  - \log  J_{\mu}f^{n}(z) \mid \leq \rho d(f^{n}(y),f^{n}(z))
	\]
	Recall that the elements of the partition were constructed in such a way that the inducing times are also zooming times, constants in each partition element. 
	
	Given $y,z \in P_{i_{0}}$ arbitrarily, we have $R_{i_{0}}(y)=R_{i_{0}}(z):=n_{0}$. From the properties above, we get
	\[
	\mid \log J_{\mu}f^{n_{0}}(y) - \log  J_{\mu} f^{n_{0}}(z) \mid \leq \rho d(f^{n_{0}}(y),f^{n_{0}}(z))
	\]
	It means that 
	\[
	\mid \log J_{\mu}F(y) - \log  J_{\mu}F(z) \mid \leq \rho d(F(y),F(z))
	\]
	Also,
	\[
	d(y,z) \leq \alpha_{n_{0}}(d(F(y),F(z))).
	\]
	Analogously,
	\[
	F(y),F(z) \in P_{i_{1}} \Rightarrow R_{i_{1}}(F(y)) = R_{i_{1}}(F(z)) = n_{1} \Rightarrow 
	\]
	\[
	\Rightarrow d(F(y),F(z)) \leq \alpha_{n_{1}}(d(F^{2}(y),F^{2}(z))).
	\]
	In general,
	\[
	F^{j}(y),F^{j}(z) \in P_{i_{j}} \Rightarrow R_{i_{j}}(F^{j}(y)) = R_{i_{j}}(F^{j}(z)) = n_{j} \Rightarrow 
	\]
	\[
	\Rightarrow d(F^{j}(y),F^{j}(z)) \leq \alpha_{n_{j}}(d(F^{j+1}(y),F^{j+1}(z))).
	\]
	Finally, we get
	\[
	\mid \log J_{\mu}F(y) - \log  J_{\mu}F(z) \mid \leq \rho d(F(y),F(z))
	\leq \dots \leq \rho \alpha_{n_{k}}\circ \dots \circ \alpha_{n_{1}}\circ\alpha_{n_{0}} (\delta) \leq  \rho a^{k+1} \delta.
	\]
	So,
	\[
	\mid -t \log J_{\mu}F(y) - (-t) \log  J_{\mu}F(z) \mid \leq \mid  t \mid\rho a^{k} \delta, \text{for all} \, \, \,\, y,z \in P_{i_{0}}
	\]
	It implies that
	\[
	V_{n}(\overline{\phi}_{t}) = \sup \{\mid\overline{\phi}_{t}(y) - \overline{\phi}_{t}(z)\mid : y_{0} = z_{0}, y_{1} = z_{1}, \dots, y_{k} = z_{k} \} \leq \mid t \mid \rho \delta a^{k}
	\]
	Then, the induced potential is locally H\"older.
\end{proof}

Given $\phi_{t} : M \to \mathbb{R}$ a pseudo-geometric potential, we set $\varphi_{t} : = \phi_{t} - P_{f}(\phi_{t})$. We observe that the induced potential $\overline{\varphi}_{t} = -t\log J_{\mu} F - R P_{f}(\phi_{t})$ is also locally H\"older.

Since the potential $\phi_{t}$ is zooming for $t < t_{0}$, we also have that $\varphi_{t}$ is zooming. Since $\overline{\varphi}_{t}$ is locally Hölder, we can proceed analogously to the case where $\phi$ is Hölder to obtain finitely many ergodic equilibrium states.

\section{Existence of pseudo-conformal measures}\label{Conformal}

Now, we proceed to show that the conformal measure which we showed to exist for $\overline{\varphi}_{t}, t < t_{0}$ can be projected to the original map to obtain a pseudo-conformal measure. We will extend the technique of G. Iommi and M. Todd in \cite{IT1}. For this section, we consider  $f:M \to M$  a zooming map and if the contraction $(\alpha_{n})_{n}$ satisfying $\alpha_{n}(r) \leq ar$ for some $a \in (0,1)$, every $n \in \mathbb{N}$ and every $r \in [0,+\infty)$ (Lipschitz contractions, for example) and a zooming measure with bounded distortion and denote $(U,F,R)$ denotes an inducing scheme constructed on an open set $U$, with induced maps $F$ and return time $R$.

Firstly, we will show that we can find an inducing scheme that satisfies the Condition (*) below.

We denote by $(U,F)^{\infty}$ the set of points $x \in M$ for which there exists $k \in \mathbb{N}$ such that $R(F^{n}(f^{k}(x)) < \infty, \text{for all} \, \, n \in \mathbb{N}$. 

\textbf{Condition (*):} If $x \in (U,F)^{\infty}$ is such that $f^{k}(y)=f^{k'}(y')=x$ for $y,y' \in (U,F)^{\infty}$ and $k,k' \in \mathbb{N}$, then, there exists $n \in \mathbb{N}$ such that $k + n, k' + n$ are inducing times for $y, y'$, respectively.

The inducing schemes constructed by Pinheiro in \cite{Pi1} may not satisfy the Condition (*). But we will make a change in order to obtain an inducing scheme satisfying this condition.

\begin{lemma}
There exists an inducing scheme that satisfies the Condition (*).
\end{lemma}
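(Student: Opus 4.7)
The plan is to modify Pinheiro's induced scheme $(F,\mathcal{P})$ from Theorem~\ref{A} so as to enforce Condition~$(\ast)$. The obstruction in the raw scheme is that the set $T(y)$ of inducing times of a point $y\in(U,F)^\infty$ records only the \emph{first} zooming returns of $y$'s $f$-orbit to $U$; if $f^k(y)=x$ for an instant $k$ that $T(y)$ skips (because either $k$ itself, or some earlier visit to $U$, is not a first zooming return), then the shifted set $T(y)-k$ is misaligned with $T(y')-k'$ for a second pre-image $y'$ with $f^{k'}(y')=x$, and no common offset $n$ past $x$ can hit inducing times on both orbits.

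My proposed modification is to saturate $\mathcal{P}$ by subdividing each $P$ along every intermediate zooming return to $U$. Using Lemma~\ref{Main} and its corollary applied to the zooming nested collection $\mathcal{A}'$, any intermediate image $f^j(P)$ that meets $U$ is automatically contained in $U$, so the zooming pre-ball of $f^j$ intersected with $P$ yields an inverse branch sent homeomorphically onto a nested subset of $U$. Iterating this subdivision produces a refined partition $\tilde{\mathcal{P}}$ whose inducing time $\tilde\tau$ records every intermediate zooming return to $U$; the Markov and full-branch properties survive because each step of the refinement respects the nested-collection geometry produced by Lemma~\ref{zoomnestcoll}.

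With $(\tilde F,\tilde{\mathcal{P}})$ in place, Condition~$(\ast)$ follows when $k,k'$ are zooming times for $y,y'$: every such visit of $y$'s $f$-orbit to $U$ is an inducing time of $\tilde F$, so $k\in T(y)$ and $k'\in T(y')$, and any inducing time $n$ of $x$ (which exists because $x\in(U,\tilde F)^\infty$) supplies $k+n\in T(y)$ and $k'+n\in T(y')$. The main obstacle, and what I expect to be the most delicate step, is twofold: first, handling those pre-images $y,y'$ for which $k$ or $k'$ fails to be a zooming time, which requires using the positive-frequency property of zooming times to relocate to a nearby zooming instant without destroying the Markov structure; second, checking that all the structural properties invoked in Sections~4 and~5 (adaptedness to the hole $H$, liftability of every ergodic expanding measure with uniformly bounded $\int\tilde\tau\,d\bar\mu$, and local H\"olderness and summable variation of $\bar\phi_t$) are preserved under the refinement. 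Adaptedness and liftability are inherited from the nested geometry of $\mathcal{A}'$ and from Pinheiro's Markov-structure argument in~\cite{Pi1}, while the distortion control on $\bar\phi_t$ is maintained because every new inducing time of $\tilde F$ is by construction a zooming time of $f$.
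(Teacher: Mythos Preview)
Your approach differs substantially from the paper's, and it has a gap that I do not see how to close.

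The paper does not refine the partition; it \emph{prunes} it. One keeps only those elements $P_i,P_j$ for which $f^m(P_j)\subset P_i$ (some $m$) forces $f^m(P_j)=P_i$; whenever a strict inclusion occurs, $P_j$ and all its $F$-preimages are erased. On the surviving scheme Condition~$(\ast)$ is verified directly from nestedness: if $y\in P_i$, $y'\in P_j$ and $f^k(y)=f^{k'}(y')=x$ with $k<\tau_i$, $k'<\tau_j$, then $f^k(P_i)$ and $f^{k'}(P_j)$ are $\mathcal{E}_Z$-pre-images of $U$ meeting at $x$, hence one contains the other; the pruning forces equality, which gives $\tau_i-k=\tau_j-k'$, and one takes $n=\tau_i-k=\tau_j-k'$.

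Your refinement, by contrast, does not address the actual obstruction. The scheme $(F,\mathcal{P})$ is already built from \emph{first} $\mathcal{E}_Z$-returns, so for any $y\in P$ there is no zooming time $0<j<\tau(P)$ with $f^j(y)\in U$: if there were, the associated pre-ball would produce an element of $\Omega(y)$ of order $j<\tau(P)$, contradicting the minimality defining $R(y)$. Thus subdividing along ``intermediate zooming returns'' is vacuous. What \emph{can} happen is that $f^j(P)\subset U$ at some $0<j<\tau(P)$ which is \emph{not} a zooming time for points of $P$; then $f^j(P)$ is a nested $\mathcal{E}_Z$-pre-image of $U$ that may sit strictly inside some partition element $P_m$, and this strict inclusion is exactly what breaks Condition~$(\ast)$ (take $y\in P$, $y'=f^j(y)\in P_m$, $k=j$, $k'=0$: since $\tau_m\neq\tau(P)-j$, the inducing-time sequences of $y$ and $y'$ are misaligned and need not ever synchronise). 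If instead you try to declare such a $j$ a new inducing time, then $\tilde F(P)=f^j(P)\subsetneq U$ and the scheme ceases to be full Markov, which you need later for the BIP property and Sarig's machinery. The ``relocate to a nearby zooming instant'' idea does not repair this, because the obstruction is geometric (a strict inclusion of nested pre-images), not a matter of frequency of zooming times.
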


\begin{proof}
In fact, we remind that the collection of regular pre-images of a hyperbolic ball is a dynamically closed family and our partition was built in such a way that $P_{i} = f^{-R _{i}}(U)$, a regular pre-image of a subset of a hyperbolic ball. 
Moreover, our collection is nested, which means that there are not linked pre-images of a same hyperbolic ball. (See Pinheiro \cite{Pi1})

Given such an inducing scheme, we only allow pairs of elements of the partition $P_{i},P_{j}$ such that $f^{m}(P_{j}) \subset P_{i}$ for some $m \in \mathbb{Z}$ implies $f^{m}(P_{j}) = P_{i}$. Otherwise, we erase $P_{j}$ from our inducing scheme as well $F_{n}^{-1}(P_{j})$ for all $n \in \mathbb{N}$, where $F_{n}: P_{n} \to U$ is the induced map restricted to the element $P_{n}$.

\begin{claim}
The remaining elements of the partition all together form an inducing scheme satisfying Condition (*).
\end{claim}

Given $x,y,y' \in (U,F)^{\infty}$ and $k,k' \in \mathbb{N}$, such that $k<R_{i}, k'<R_{j}$ and $f^{k}(y)=x=f^{k'}(y')$ with $y \in P_{i}, y' \in P_{j}$, we claim that $f^{k}(P_{i}) =f^{k'}(P_{j})$. 

Since $x \in f^{k}(P_{i}) \cap f^{k'}(P_{j})$, we have $f^{k}(P_{i}) \cap f^{k'}(P_{j}) \neq \emptyset$. It implies that either $f^{k}(P_{i}) \subset f^{k'}(P_{j})$ or $f^{k'}(P_{j}) \subset f^{k}(P_{i})$, because $U$ is nested and can not have linked pre-images. In other words, either $f^{k - k'}(P_{i}) \subset P_{j}$ or $f^{k'- k}(P_{j}) \subset P_{i}$.

Because of our assumption, it means that either $f^{k - k'}(P_{i}) = P_{j}$ or $f^{k'- k}(P_{j}) = P_{i}$ and either $f^{R_{j} + k - k'}(P_{i}) \subset f^{R_{j}}(P_{j}) = U$ or $f^{R_{i} + k'- k}(P_{j}) \subset f^{R_{i}}(P_{i}) = U$. Then, we obtain $R_{j} + k - k' = R_{i}$ or $R_{i} + k'- k = R_{j}$. So, in any case, we put $n = R_{j} - k' = R_{i} - k$ and we have $R_{i} = k + n$ and $R_{j} = k' + n$.

So, there always exists $n \in \mathbb{N}$ such that $k + n = R_{i}, k' + n = R_{j}$. Then, both $k + n$ and $k' + n$ are inducing times and our inducing scheme satisfies the Condition (*).
\end{proof}

Now we follow ideas of a theorem proved in \cite{IT1} to project conformal measures. We consider $\phi = \varphi_{t}, t < t_{0}$.

\begin{theorem}
Suppose that $f:M \to M$ is a zooming map with general contractions and a zooming measure with bounded distortion and $(U,F,R)$ an inducing scheme satisfying condition $(*)$. Let us consider $\phi : M \to [-\infty,\infty]$ and its induced version $\Phi : U \to [-\infty,\infty)$ with a $\Phi$-conformal probability measure $m_{\Phi}$. Then $m_{\Phi}$ projects to a $\sigma$-finite $\phi$-conformal measure $\nu_{\phi}$ and it is finite if $\phi$ is bounded from below.
\end{theorem}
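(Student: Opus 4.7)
Following \cite{IT1}, I would define the projected measure $\nu_\phi$ by
\[
\nu_{\phi}(A) := \sum_{i=1}^{\infty} \sum_{k=0}^{\tau_{i}-1} \int_{P_{i}} \mathbf{1}_{A}(f^{k}(x))\, e^{-S_{k}\phi(x)}\,dm_{\Phi}(x)
\]
for Borel $A \subset M$, where $S_{k}\phi = \phi + \phi\circ f + \dots + \phi\circ f^{k-1}$ (and $S_{0}\phi \equiv 0$). Geometrically, this is the sum over all tower levels $(i,k)$ of the push-forward $(f^{k}|_{P_{i}})_{*}(e^{-S_{k}\phi} m_{\Phi}|_{P_{i}})$; each level contributes with its natural conformal weight. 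The support of $\nu_\phi$ is covered by the countable family $\{f^{k}(P_{i}) : i \geq 1,\, 0 \leq k < \tau_{i}\}$, and each such set has $\nu_\phi$-measure at most $(\sup_{P_{i}} e^{-S_{k}\phi})\,m_{\Phi}(P_{i}) < \infty$, so $\nu_{\phi}$ is $\sigma$-finite. If $\phi \geq c$ on $M$, then $\nu_\phi(M) \leq \sum_{i} m_{\Phi}(P_{i}) \sum_{k=0}^{\tau_{i}-1} e^{-kc}$, and finiteness follows either by a geometric-series bound (when $c>0$) or, when $c \leq 0$, by exponential tail decay of $m_{\Phi}(\{\tau > n\})$ supplied by the Gibbs property, after passing to a sufficiently high iterate of $F$ as in Proposition~\ref{Gibbs} so that the tail-decay rate beats $e^{|c|n}$.

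The substantive step is the $\phi$-conformality $\nu_{\phi}(f(B)) = \int_{B} e^{-\phi}\,d\nu_{\phi}$ for every Borel domain of injectivity $B$ of $f$. Substituting the definition and using $e^{-\phi(f^{k}(x))}\,e^{-S_{k}\phi(x)} = e^{-S_{k+1}\phi(x)}$ produces
\[
\int_{B} e^{-\phi}\,d\nu_{\phi} = \sum_{i} \sum_{k=0}^{\tau_{i}-1} \int_{P_{i}\cap f^{-k}(B)} e^{-S_{k+1}\phi}\,dm_{\Phi}, \quad \nu_{\phi}(f(B)) = \sum_{i} \sum_{k=0}^{\tau_{i}-1}\int_{P_{i}\cap f^{-k}(f(B))} e^{-S_{k}\phi}\,dm_{\Phi}.
\]
In the first sum, the term with $k = \tau_{i}-1$ contains $e^{-S_{\tau_{i}}\phi} = e^{-\Phi}$, and the $\Phi$-conformality of $m_{\Phi}$ under $F|_{P_{i}}$ converts it into $m_{\Phi}\bigl(f(B\cap f^{\tau_{i}-1}(P_{i}))\bigr)$, a quantity living back on $U$. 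Condition~(*) is exactly the combinatorial hypothesis that identifies these ``tower-top'' contributions (summed over $i$) with the ``tower-bottom'' $k=0$ contribution to $\nu_{\phi}(f(B))$, which is $m_{\Phi}(U \cap f^{-1}(f(B)))$: given any $x \in U$ with $f(x) = f(y)$ for some $y \in B$, (*) produces a common inducing time shared by $x$ and $y$, forcing $x$ to lie in $f^{\tau_{i}-1}(P_{i})$ for a unique $i$ and pairing its contribution bijectively with a single partition branch. The remaining interior terms are matched by the index shift $j = k+1$ combined with the injectivity of $f^{k}|_{P_{i}}$ along each tower.

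The technical heart of the argument is this boundary identification. Without Condition~(*), two distinct preimage branches of a common point in $f(B)$ could belong to induced partition elements whose first-return times are misaligned, leaving the $k=0$ block on the right (which counts \emph{all} preimages in $U$ of points in $f(B)$) strictly larger than the branch-by-branch count produced by the $F$-conformality of $m_{\Phi}$ (the $k=\tau_{i}-1$ blocks on the left). Condition~(*) rules out this pathology by synchronizing all preimage branches at a common inducing time, which is precisely what is needed to translate the $F$-conformality of $m_{\Phi}$, a pointwise identity on $U$ involving only the natural inverse branches of $F$, into the $f$-conformality of $\nu_{\phi}$ on the whole of $M$.
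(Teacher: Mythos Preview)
Your tower-sum definition and the paper's pointwise definition are \emph{not} the same measure, and this is where the argument breaks. The paper (following \cite{IT1}) defines $\nu_\phi$ at a point $x\in f^k(P_i)$ by choosing \emph{one} pair $(i,k)$ and setting $d\nu_\phi(x)=e^{-S_k\phi(y)}\,dm_\Phi(y)$ for the corresponding $y\in P_i$; Condition~$(*)$ is invoked precisely to show this value is independent of the choice of $(i,k)$. Your formula instead \emph{sums} the contributions over all $(i,k)$ with $x\in f^k(P_i)$, so whenever tower levels overlap in $M$ (which the paper explicitly allows: ``There may be many such pairs $(i,k)$'') you overcount. This overcounting is exactly what derails your conformality computation: in the ``interior'' matching you assert that the $(i,k)$-term of $\int_B e^{-\phi}d\nu_\phi$ equals the $(i,k+1)$-term of $\nu_\phi(f(B))$, but the former integrates over $P_i\cap f^{-k}(B)$ while the latter integrates over $P_i\cap f^{-(k+1)}(f(B))$, and $f^{-(k+1)}(f(B))\supsetneq f^{-k}(B)$ whenever $f(B)$ has any $f$-preimage outside $B$ that meets $f^k(P_i)$. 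Injectivity of $f^k|_{P_i}$ does not help here; the extra preimages come from other $f$-branches. So the index-shift bookkeeping does not close, and Condition~$(*)$ cannot repair the interior mismatch --- in the paper it is used for \emph{well-definedness} of the single-choice measure, after which conformality $d\nu_\phi(f(x))=e^{-\phi(x)}d\nu_\phi(x)$ is immediate from the defining formula, with no sum to juggle.

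Your finiteness argument is also different from the paper's and not self-contained. The paper uses topological exactness: it takes $m$ with $f^m(U)=M$ and bounds $\nu_\phi(M)$ by a finite sum plus $e^{m\sup(-\phi)}\nu_\phi(U)$. Your route via exponential tails of $m_\Phi(\{\tau>n\})$ requires a tail-decay rate strictly larger than $|c|$ when $\phi\ge c$, and the appeal to Proposition~\ref{Gibbs} does not supply this; that proposition concerns how far non-equilibrium measures deviate from the Gibbs measure on fixed cylinders, not the decay of $m_\Phi$ on $\{\tau=n\}$, and there is no mechanism here by which iterating $F$ improves the tail exponent relative to an arbitrary lower bound $c$ on $\phi$.
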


\begin{proof}
First, note that $\Phi < \infty$ implies that for any set $A \subset U, m_{\Phi}(A)>0$ implies $m_{\Phi}(F(A)) > 0$. This means that no set of positive measures can leave $\cup_{i} P_{i}$ under iteration of $F$, i.e., for all $k \in \mathbb{N}, m_{\Phi}(F^{-k}(\cup_{i}P_{i})) = 1$ and thus $m_{\Phi}(U\cap(X,F)^{\infty}) = 1$. We will spread the measure $m_{\Phi}$ onto $(U,F)^{\infty}$ as follow.

Suppose that $x \in (U,F)^{\infty}$ is contained in some $f^{k}(P_{i})$ for $0 \leq k \leq R_{i} - 1$. There may be many such pairs $(i,k)$, but we pick one arbitrarily and then later show that we could have chosen any and obtained the same result. There exists a unique $y \in P_{i}$ such that $f^{k}(y) = x$. We define $\nu_{\phi}$ so that for any $j \in \mathbb{N}$,
\[
d\nu_{\phi}(f^{j}(x)) = e^{-S_{k + j}\phi(y)} dm_{\Phi}(y).
\]
If the measure is well defined, this gives a conformal measure locally, since for $j \in \mathbb{N}$,
\[
d\nu_{\phi}(f^{j}(x)) = e^{-S_{k + j}\phi(y)} dm_{\Phi}(y) = e^{-S_{j}\phi(x)} e^{-S_{k}\phi(y)} dm_{\Phi}(y) = e^{-S_{j}\phi(x)}d\nu_{\phi}(x).
\]
Note that if $k + j = R_{i}$, then $f^{k + j}(y) = F(y)$ and
\begin{eqnarray*}
d\nu_{\phi}(F(y))  & = & d\nu_{\phi}(f^{R(y)}(y)) = d\nu_{\phi}(f^{k + j}(y)) = d\nu_{\phi}(f^{j}(x)) \\ 
& = & e^{-S_{k + j}\phi(y)}dm_{\Phi}(y) = e^{-\Phi(y)}dm_{\Phi}(y) = dm_{\Phi}(F(y)),
\end{eqnarray*}
by the $\Phi$-conformality of $m_{\Phi}$. 

This also extends to the case when $k + j = R^{p}(y)$ for $p \in \mathbb{N}$, where we obtain
\begin{eqnarray*}
d\nu_{\phi}(F^{p}(y)) & = & d\nu_{\phi}(f^{R_{p}(y)}(y)) = d\nu_{\phi}(f^{k+j}(y)) = d\nu_{\phi}(f^{j}(x)) \\
& = & e^{-S_{k + j}\phi(y)}dm_{\Phi}(y) = e^{-S_{p}\Phi(y)}d_{\Phi}(y) = dm_{\Phi}(F^{p}(y)).
\end{eqnarray*}
In order to prove that the procedure given above is well defined we need to check that the same measure is assigned at $x$ when there is $i' \neq i$ and $1 \leq k' \leq R_{i'} -1$ such that $x$ is also contained in $f^{i'}(P_{i'})$. If we let $\nu'_{\phi}$ be the measure at $x$ obtained analogously to $\nu_{\phi}$ but with $i'$ and $k'$ in place of $i$ and $k$, respectively, and some point $y' \in P_{i'}$ in place of $y \in P_{i}$, we must show $\nu'_{\phi} = \nu_{\phi}$.

By condition $(*)$ the exists $n$ such that $k + n$ and $k' + n$ are inducing times for $y$ and $y'$, respectively. 

We have that
\[
d\nu_{\phi}(f^{n}(x)) = d\nu_{\phi}(f^{k+n}(y)) = d\nu_{\phi}(F(y)) = dm_{\Phi}(F(y)) = dm_{\Phi}(f^{k+n}(y)) = dm_{\Phi}(f^{n}(x))
\]
and analogously
\[
d\nu'(f^{n}(x)) = dm_{\Phi}(f^{n}(x)),
\]
Moreover, since $m_{\Phi}$ is conformal
\begin{eqnarray*}
d\nu_{\phi}(x) & = & e^{-S_{k}\phi(y)}dm_{\Phi}(y) = e^{S_{n}\phi(x)} e^{-S_{n}\phi(x)} e^{-S_{k}\phi(y)} dm_{\Phi}(y) \\
& = & e^{S_{n}\phi(x)} e^{-S_{k+n}\phi(y)} dm_{\Phi}(y) = e^{S_{n}\phi(x)} e^{-\Phi(y)} dm_{\Phi}(y) \\
& = & e^{S_{n}\phi(x)} dm_{\Phi}(F(y)) = e^{S_{n}\phi(x)} d\nu_{\phi}(F(y)) = e^{S_{n}\phi(x)} d\nu_{\phi}(f^{n}(x)).
\end{eqnarray*}
Analogously,
\[
d\nu'_{\phi}(x) = e^{S_{n}\phi(x)} d\nu'_{\phi}(f^{n}(x)).
\]
Therefore,
\[
d\nu'_{\phi}(x) = e^{S_{n}\phi(x)}d\nu'_{\phi}(f^{n}(x)) = e^{S_{n}\phi(x)}dm_{\Phi}(f^{n}(x)) = e^{S_{n}\phi(x)}d\nu_{\phi}(f^{n}(x)) = d\nu_{\phi}(x),
\]
so $\nu'_{\phi} = \nu_{\phi}$, as required.

For $\sigma$-finiteness of $\nu_{\phi}$, notice that for any $x \in \cup_{i \geq 1} \cup_{k=0}^{R_{i} - 1} f^{k}(P_{i})$, we can choose a single element $f^{k}(P_{i})$ as above containing $x$ to apportion measure at $x$. Since the resulting measure is finite, $\nu_{\phi}$ is $\sigma$-finite.

To prove that $\nu_{\phi}$ is actually finite, we remind that $f$ is an topologically exact map: for all open set $V \subset M$ there exists $m \in \mathbb{N}$ such that $\displaystyle f^{m}(V) = M$. We can take $V = U$.

We have that
\begin{eqnarray*}
U & = & \Bigg{(}\bigcup_{R_{i} \leq m} P_{i} \Bigg{)} \cup \Bigg{(}\bigcup_{R_{i} > m} P_{i}\Bigg{)} \cup \Bigg{(}\bigcup_{i \geq 1} \partial P_{i} \Bigg{)}\\
& \Rightarrow &
f^{m}(U) \subset U \cup \Bigg{(}\bigcup_{R_{i} \leq m} \bigcup_{k=0}^{R_{i} - 1} f^{k}(P_{i}) \Bigg{)} \cup \Bigg{(}\bigcup_{R_{i} > m} f^{m}(P_{i})\Bigg{)} \cup f^{m}\Bigg{(}\bigcup_{i \geq 1} \partial P_{i} \Bigg{)} \\
&\Rightarrow & \nu_{\phi}(f^{m}(U)) = \nu_{\phi} \Bigg{(} U \cup\Bigg{(}\bigcup_{R_{i} \leq m} \bigcup_{k=0}^{R_{i} - 1} f^{k}(P_{i}) \Bigg{)} \cup \Bigg{(}\bigcup_{R_{i} > m} f^{m}(P_{i})\Bigg{)}\cup f^{m}\Bigg{(}\bigcup_{i \geq 1} \partial P_{i} \Bigg{)}\Bigg{)} \\
& \leq & \nu_{\phi}(U)  + \sum_{R_{i} \leq m} \sum_{k=0}^{R{i} - 1} \nu_{\phi}(f^{k}(P_{i})) 
 + \sum_{R{i} > m} \nu_{\phi}(f^{m}(P_{i})) + \sum_{i \geq 1} \nu_{\phi}(f^{m}(\partial P_{i}))\\ 
& = & \nu_{\phi}(U)  +
\sum_{R_{i} \leq m} \sum_{k=0}^{R{i} - 1} \int_{P_{i}} e^{-S_{k}\phi}d\nu_{\phi}
 + \sum_{R{i} > m} \int_{P_{i}} e^{-S_{m}\phi}d\nu_{\phi}\\
& \leq & \nu_{\phi}(U)  +
\sum_{R_{i} \leq m} \sum_{k=0}^{R{i} - 1} e^{k \sup(-\phi)} \nu_{\phi}(P_{i})
 + \sum_{R{i} > m} e^{m \sup(-\phi)} \nu_{\phi}(P_{i}) \\
& \leq &\nu_{\phi}(U)  + \sum_{R_{i} \leq m} \sum_{k=0}^{R{i} - 1} e^{k \sup(-\phi)} \nu_{\phi}(P_{i})
 + e^{m \sup_{U}(-\phi)} \nu_{\phi}(U).
\end{eqnarray*}
We used the conformality of $\nu_{\phi}$ and that $\sup(-\phi) < \infty$. Also, $\nu_{\phi}(f^{m}(\partial P_{i})) = 0$ since the measure is spread on $\cup_{i = 1}^{\infty} \cup_{k =0}^{R_{i} - 1}f^{k}(P_{i})$. The sum is finite because there are only finitely many finite terms. Now, $\nu_{\phi}(\cup_{i = 1}^{\infty} \cup_{k =0}^{R_{i} - 1}f^{k}(P_{i})) \leq \nu_{\phi}(M) = \nu_{\phi}(f^{m}(U)) < \infty$ and the projected measure $\nu_{\phi}$ is actually finite.
\end{proof}

\section{Bounded Distortion Potentials}\label{Distortion}

In this section, if $f:M \to M$ is a zooming map and if the contraction $(\alpha_{n})_{n}$ satisfies $\alpha_{n}(r) \leq ar$ for some $a \in (0,1)$, every $n \in \mathbb{N}$ and every $r \in [0,+\infty)$ (Lipschitz contractions, for example) we have that the bounded distortion potentials $\psi_{t}$, that we introduce here, are zooming for $t > t_{0}$ (defined below) and its induced potential is locally Hölder. 

The following definition is based on the notion of measure with bounded distortion and its Jacobian.

\begin{definition}
	Let $\mu$ be the zooming reference measure and $\mathbb{A}$ the set of ergodic absolutely continuous invariant measures. We say that a measurable function $\psi : M \to \mathbb{R}$ such that $\int \psi d\mu > 0$ has \textbf{\textit{bounded distortion}} if  there exists $\rho > 0$ such that
	\[
	\Bigg{|}\sum_{i=0}^{n-1}\psi(f^{i}(y)) - \sum_{i=0}^{n-1}\psi(f^{i}(z))\Bigg{|} \leq \rho d(f^{n}(y), f^{n}(z)),
	\]
	for every $y,z \in V_{n}(x)$, $\mu$-almost everywhere  $x \in M$, for every zooming time $n$ of $x$. For $t \in \mathbb{R}$, we call $\psi_{t} = t\psi$ a \textbf{\textit{bounded distortion potential}}.
\end{definition}

\subsection{Bounded distortion potentials and zooming measures}

Now, we show that for $t > 0$ small enough a necessary condition for a measure to be an equilibrium state for the potential $\psi_{t} = t \psi$ is being zooming.

\begin{proposition}
	If $f:M \to M$ is a zooming map and if the contraction $(\alpha_{n})_{n}$ satisfies $\alpha_{n}(r) \leq ar$ for some $a \in (0,1)$, every $n \in \mathbb{N}$ and every $r \in [0,+\infty)$ (Lipschitz contractions, for example), then an ergodic measure $\nu_{0}$ has free energy high enough for $\psi_{t} = t \psi $ with $t > t_{0} \geq 0$, where 
	\[
	\displaystyle t_{0} : = \max_{\mu_{0} \in \mathbb{A}} \bigg{\{} \frac{h(f)}{\int \psi d\mu_{0}} \bigg{\}} \geq 0,
	\]
	then $\nu_{0}$ is a zooming measure. In particular, the bounded distortion  potential is zooming.
\end{proposition}

\begin{proof}
	In fact, let $\eta$ be an ergodic measure with $\eta(\Lambda) = 0$. Since $\psi \geq 0$, we have, for $t > t_{0} \geq 0$
	\begin{eqnarray*}
		h_{\eta}(f) + \int \psi_{t} d\eta  & = & h_{\eta}(f)  + t \int \psi d\eta = h_{\eta}(f) \\ 
		\leq h(f) < t\int \psi d\mu_{0} & = & \int t \psi d\mu_{0} \leq h_{\mu_{0}}(f) + \int \phi_{t} d \mu_{0}.
	\end{eqnarray*}
	It means that a measure must be zooming to have any chance of being an equilibrium state. In other words, under these assumptions the potential is zooming.
\end{proof}

\begin{lemma}
	If $f:M \to M$ is a zooming map and if the contraction $(\alpha_{n})_{n}$ satisfies $\alpha_{n}(r) \leq ar$ for some $a \in (0,1)$, every $n \in \mathbb{N}$ and every $r \in [0,+\infty)$ (Lipschitz contractions, for example). Then the induced bounded distortion potential $\overline{\phi}_{t}$ is locally H\"older.
\end{lemma}

\begin{proof}
	Given a zooming system $f: M \to M$ with contractions as above and a function $\psi$ with bounded distortion, we have that, $\text{for all} \, \, y,z \in V_{n}(x)$, where $n$ is a zooming time for $x$ and $V_{n}(x)$ is a zooming pre-ball, the following properties, $\text{for all} \, \, y,z \in V_{n}(x)$, by the law of expansion and bounded distortion:
	\[
	d(f^{i}(y),f^{i}(z)) \leq \sigma^{n-i}(d(f^{n}(y),f^{n}(z))), \,\, \text{for all} \, \, \,\, 0 \leq i \leq n.
	\]
	\[
	\exists \,\, \rho> 0 \,\, \text{such that} \,\, \Bigg{|}\sum_{i=0}^{n-1}\psi(f^{i}(y)) - \sum_{i=0}^{n-1}\psi(f^{i}(z))\Bigg{|} \leq \rho d(f^{n}(y), f^{n}(z)).
	\]
	Recall that the elements of the partition were constructed in such a way that the inducing times are also zooming times, constants in each partition element. 
	
	Given $y,z \in P_{i_{0}}$ arbitrarily, we have $R_{i_{0}}(y)=R_{i_{0}}(z):=n_{0}$. From the properties above, we get
	\[
	\Bigg{|}\sum_{i=0}^{n_{0}-1}\psi(f^{i}(y)) - \sum_{i=0}^{n_{0}-1}\psi(f^{i}(z))\Bigg{|} \leq \rho d(f^{n_{0}}(y), f^{n_{0}}(z)),
	\]
	It means that 
	\[
	\mid \overline{\psi}(y) - \overline{\psi}(z) \mid \leq \rho d(F(y),F(z)).
	\]
	Also,
	\[
	d(y,z) \leq \alpha_{n_{0}}(d(F(y),F(z))).
	\]
	Analogously,
	\[
	F(y),F(z) \in P_{i_{1}} \Rightarrow R_{i_{1}}(F(y)) = R_{i_{1}}(F(z)) = n_{1} \Rightarrow 
	\]
	\[
	\Rightarrow d(F(y),F(z)) \leq \alpha_{n_{1}}(d(F^{2}(y),F^{2}(z))).
	\]
	In general,
	\[
	F^{j}(y),F^{j}(z) \in P_{i_{j}} \Rightarrow R_{i_{j}}(F^{j}(y)) = R_{i_{j}}(F^{j}(z)) = n_{j} \Rightarrow 
	\]
	\[
	\Rightarrow d(F^{j}(y),F^{j}(z)) \leq \alpha_{n_{j}}(d(F^{j+1}(y),F^{j+1}(z))).
	\]
	Finally, we get
	\[
	\mid \overline{\psi}(y) - \overline{\psi}(z) \mid \leq \rho d(F(y),F(z))
	\leq \dots \leq \rho \alpha_{n_{k}}\circ \dots \circ \alpha_{n_{1}}\circ\alpha_{n_{0}} (\delta) \leq  \rho a^{k+1} \delta.
	\]
	So,
	\[
	\Bigg{|}\sum_{i=0}^{n_{0}-1}\psi(f^{i}(y)) - \sum_{i=0}^{n_{0}-1}\psi(f^{i}(z))\Bigg{|} \leq \rho a^{k} \delta, \text{for all} \, \, \,\, y,z \in P_{i_{0}}.
	\]
	It implies that
	\[
	V_{n}(\overline{\psi}_{t}) = \sup \{\mid\overline{\psi}_{t}(y) - \overline{\psi}_{t}(z)\mid : y_{0} = z_{0}, y_{1} = z_{1}, \dots, y_{k} = z_{k} \} \leq  |t|  \rho \delta a^{k}
	\]
	Then, the induced potential is locally H\"older.
\end{proof}

Given $\psi_{t} : M \to \mathbb{R}$ a bounded distortion potential, we set $\varphi_{t} : = \psi_{t} - P_{f}(\phi_{t})$. We observe that the induced potential $\overline{\varphi}_{t} = -t\log J_{\mu} F - R P_{f}(\phi_{t})$ is also locally H\"older.

Since the potential $\psi_{t}$ is zooming for $t > t_{0}$, we also have that $\varphi_{t}$ is zooming. Since $\overline{\varphi}_{t}$ is locally Hölder, we can proceed analogously to the case where $\phi$ is Hölder to obtain finitely many ergodic equilibrium states.

\begin{remark}
	We observe that the pseudo-geometric potentials $\phi_{t} = -t\log J_{\mu}f$ are examples of bounded distortion potentials.
\end{remark}

\begin{example}
    Let $u : M \to \mathbb{R}$ be a Lipschitz function and $K > 0$ such that it holds that $\psi = u - u \circ f + K \geq 0$. We claim that $\psi$ has bounded distortion. In fact, we have for every $w \in M$
    \[
    \sum_{i=0}^{n-1}\psi(f^{i}(w)) =  u(w) - u(f^{n}(w)) + nK 
    \]
    and for every $y,z \in V_{n}(x)$
    \[
    \Bigg{|}\sum_{i=0}^{n-1}\psi(f^{i}(y)) - \sum_{i=0}^{n-1}\psi(f^{i}(z))\Bigg{|} = \mid (u(y) - u(f^{n}(y)) + nK) - (u(z) - u(f^{n}(z)) + nK)\mid \leq 
    \]
    \[
    \mid u(y) - u(z) \mid + \mid u(f^{n}(y)) - u(f^{n}(z)) \mid \leq d(y,z) + d(f^{n}(y), f^{n}(z)) \leq 2 d(f^{n}(y), f^{n}(z)). 
    \] 
    Moreover, $\int \psi d\mu = K > 0$. Then, $\psi$ has bounded distortion with $\rho = 2$.
\end{example}	

\section{Examples}\label{Examples}

In this section, we give examples of  zooming systems. We begin by defining a non-flat map. We begin by recalling the examples given in \cite{AOS}, where the expanding set is dense in $M$, the hole is empty and the map is closed.

\subsection{Viana maps} We recall the definition of the open class of maps with critical sets in dimension 2, introduced by M. Viana in \cite{V}. We skip the technical
points. It can be generalized for any dimension (See \cite{A}).

Let $a_{0} \in (1,2)$ be such that the critical point $x=0$ is pre-periodic for the quadratic map $Q(x)=a_{0} - x^{2}$. Let $S^{1}=\mathbb{R}/\mathbb{Z}$ and 
$b:S^{1} \to \mathbb{R}$ a Morse function, for instance $b(\theta) = \sin(2\pi\theta)$. For fixed small $\alpha > 0$, consider the map
\[
\begin{array}{c}
f_{0}: S^{1} \times \mathbb{R} \longrightarrow S^{1} \times \mathbb{R}\\
\,\,\,\,\,\,\,\,\,\,\,\,\,\,\,\,\,\,\,\ (\theta,x) \longmapsto (g(\theta),q(\theta,x))
\end{array}
\] 
where $g$ is the uniformly expanding map of the circle defined by $g(\theta)=d\theta
(mod\mathbb{Z})$ for some $d \geq 16$, and $q(\theta,x) = a(\theta) - x^{2}$ with $a(\theta) = a_{0} + \alpha b(\theta)$. It is easy to check that for $\alpha > 0$ 
small enough there is an interval $I \subset (-2,2)$ for which $f_{0}(S^{1} \times I)$ is contained in the interior of $S^{1} \times I$. Thus, any map $f$ sufficiently
close to $f_{0}$ in the $C^{0}$ topology has $S^{1} \times I$ as a forward invariant region. We consider from here on these maps $f$ close to $f_{0}$ restricted to 
$S^{1} \times I$. Taking into account the expression of $f_{0}$ it is not difficult to check that for $f_{0}$ (and any map $f$ close to $f_{0}$ in the $C^{2}$ topology)
the critical set is non-degenerate.

The main properties of $f$ in a $C^{3}$ neighbourhood of $f$ that we will use here are summarized below (See \cite{A},\cite{AV},\cite{Pi1}):

\begin{enumerate}
 \item[(1)] $f$ is \textbf{\textit{non-uniformly expanding}}, that is, there exist $\lambda > 0$ and a Lebesgue full measure set $H \subset S^{1} \times I$ such that 
  for every point $p=(\theta, x) \in H$, the following holds
\[
\displaystyle \limsup_{n \to \infty} \frac{1}{n} \sum_{i=0}^{n-1} \log \parallel Df(f^{i}(p))^{-1}\parallel^{-1} < -\lambda.
\]  
 \item[(2)] Its orbits have \textbf{\textit{slow approximation to the critical set}}, that is, for every $\epsilon > 0$ the exists $\delta > 0$ such that for every point
  $p=(\theta, x) \in H \subset S^{1} \times I$, the following holds 
\[
\displaystyle \limsup_{n \to \infty} \frac{1}{n} \sum_{i=0}^{n-1} - \log \text{dist}_{\delta}(p,\mathcal{C}) < \epsilon.
\]  
where 
\[
\text{dist}_{\delta}(p,\mathcal{C}) =  
\left\{ 
\begin{array}{ccc}
dist(p,\mathcal{C}), & if & dist(p,\mathcal{C}) < \delta\\
1 & if & dist(p,\mathcal{C}) \geq \delta 
\end{array}
\right.
\]  
 \item[(3)] $f$ is topologically mixing;
  
 \item[(4)] $f$ is strongly topologically transitive;
  
 \item[(5)] it has a unique ergodic absolutely continuous invariant (thus SRB) measure;
  
 \item[(6)]the density of the SRB measure varies continuously in the $L^{1}$ norm with $f$.
\end{enumerate}

\begin{remark}
We observe that this definition of non-uniformly expansion is included in ours by neighbourhoods.
\end{remark}

\subsection{Benedicks-Carleson Maps} We study a class of non-hyperbolic maps of the interval with the condition of exponential growth of the derivative at critical values, called 
\textbf{\textit{Collet-Eckmann Condition}}. We also ask the map to be $C^{2}$ and topologically mixing and the critical points to have critical order 
$2 \leq \alpha < \infty$.

Given a critical point $c \in I$, the \textbf{\textit{critical order}} of $c$ is a number $\alpha_{c} > 0$ such that 
$f(x) = f(c) \pm |g_{c}(x)| ^{\alpha_{c}}, \,\, \text{for all} \, \, x \in \mathcal{U}_{c}$ where $g_{c}$ is a diffeomorphism 
$g_{c}: \mathcal{U}_{c} \to g(\mathcal{U}_{c})$ and $\mathcal{U}_{c}$ is a neighbourhood of $c$. 

Let $\delta>0$ and denote $\mathcal{C}$ the set of critical points and $\displaystyle B_{\delta} = \cup_{c \in \mathcal{C}} (c - \delta, c + \delta)$. 
Given $x \in I$, we suppose that

\begin{itemize}
 \item \textbf{(Expansion outside $B_{\delta}$)}.  There exists $\kappa > 1 $ and $\beta > 0$ such that, if $x_{k} = f^{k}(x) \not \in B_{\delta}, \,\, 0 \leq k \leq n-1$ then $|Df^{n}(x)| \geq \kappa \delta^{(\alpha_{\max} -1)}e^{\beta n}$, where $\alpha_{\max} = \max \{\alpha_{c}, c \in \mathcal{C}\}$. Moreover, if $x_{0} \in f(B_{\delta})$ or $x_{n} \in B_{\delta}$ then $|Df^{n}(x)| \geq \kappa e^{\beta n}$.
 
 \item \textbf{(Collet-Eckmann Condition)}. There exists $\lambda > 0$ such that 
 \[
 |Df^{n}(f(c))| \geq e^{\lambda n}.
 \]
  
 \item \textbf{(Slow Recurrence to $\mathcal{C}$)}. There exists $\sigma \in (0, \lambda/5)$ such that 
 \[
 dist(f^{k}(x), \mathcal{C}) \geq e^{-\sigma k}.
 \]
\end{itemize}

\subsection{Rovella Maps}

There is a class of non-uniformly expanding maps known as \textbf{\textit{Rovella Maps}}. They are derived from the so-called \textit{Rovella Attractor},
a variation of the \textit{Lorenz Attractor}. We proceed with a brief presentation. See \cite{AS} for details.

\subsubsection{Contracting Lorenz Attractor}

The geometric Lorenz attractor is the first example of a robust attractor for a flow containing a hyperbolic singularity. The attractor is a transitive maximal invariant
set for a flow in three-dimensional space induced by a vector field having a singularity at the origin for which the derivative of the vector field at the singularity has
real eigenvalues $\lambda_{2} < \lambda_{3} < 0 < \lambda_{1}$ with $\lambda_{1} + \lambda_{3} > 0$. The singularity is accumulated by regular orbits which prevent the 
attractor from being hyperbolic.

The geometric construction of the contracting Lorenz attractor (Rovella attractor) is the same as the geometric Lorenz attractor. The only difference is the condition
(A1)(i) below that gives in particular $\lambda_{1} + \lambda_{3} < 0$. The initial smooth vector field $X_{0}$ in $\mathbb{R}^{3}$ has the following properties:

\begin{itemize}
 
 \item[(A1)] $X_{0}$ has a singularity at $0$ for which the eigenvalues $\lambda_{1},\lambda_{2},\lambda_{3} \in \mathbb{R}$ of $DX_{0}(0)$ satisfy:
      \begin{itemize}
        
        \item[(i)] $0 < \lambda_{1} < -\lambda_{3}  < -\lambda_{2}$,
        
        \item[(ii)] $r > s+3$, where $r=-\lambda_{2}/\lambda_{1}, s=-\lambda_{3}/\lambda_{1}$;
      \end{itemize}

 \item[(A2)] there is an open set $U \subset \mathbb{R}^{3}$, which is forward invariant under the flow, containing the cube
 $\{(x,y,z) : \mid x \mid \leq 1, \mid y \mid \leq 1, \mid x \mid \leq 1\}$ and supporting the \textit{Rovella attractor}
 \[
 \displaystyle \Lambda_{0} = \bigcap_{t \geq 0} X_{0}^{t}(U).
 \]
 
 The top of the cube is a Poincar\'e section foliated by stable lines $\{x = \text{const}\} \cap \Sigma$ which are invariant under Poincar\'e first return map $P_{0}$.
 The invariance of this foliation uniquely defines a one-dimensional map $f_{0} : I \backslash \{0\} \to I$ for which
 \[
 f_{0} \circ \pi = \pi \circ P_{0},
 \]
 where $I$ is the interval $[-1,1]$ and $\pi$ is the canonical projection $(x,y,z) \mapsto x$;
 
 \item[(A3)] there is a small number $\rho >0$ such that the contraction along the invariant foliation of lines $x =$const in $U$ is stronger than $\rho$.
\end{itemize}

See \cite{AS} for properties of the map $f_{0}$.

\subsubsection{Rovella Parameters}

The Rovella attractor is not robust. However, the chaotic attractor persists in a measure theoretical sense: there exists a one-parameter family of positive Lebesgue measure
of $C^{3}$ close vector fields to $X_{0}$ which have a transitive non-hyperbolic attractor. In the proof of that result, Rovella showed that there is a set of parameters
$E \subset (0,a_{0})$ (that we call \textit{Rovella parameters}) with $a_{0}$ close to $0$ and $0$ a full density point of $E$, i.e.
\[
\displaystyle \lim_{a \to 0} \frac{\mid E \cap (0,a) \mid}{a} = 1,
\]
such that:

\begin{itemize}
 \item[(C1)] there is $K_{1}, K_{2} > 0$ such that for all $a \in E$ and $x \in I$
 \[
 K_{2} \mid x \mid^{s-1} \leq f_{a}'(x) \leq K_{1} \mid x \mid^{s-1},
 \]
 where $s=s(a)$. To simplify, we shall assume $s$ fixed.
 
 \item[(C2)] there is $\lambda_{c} > 1$ such that for all $a \in E$, the points $1$ and $-1$ have \textit{Lyapunov exponents} greater than $\lambda_{c}$:
 \[
 (f_{a}^{n})'(\pm 1) > \lambda_{c}^{n}, \,\, \text{for all} \, \, n \geq 0;
 \]
 
 \item[(C3)] there is $\alpha > 0$ such that for all $a \in E$ the \textit{basic assumption} holds:
 \[
 \mid f_{a}^{n-1}(\pm 1)\mid > e^{-\alpha n}, \,\, \text{for all} \, \, n \geq 1;
 \]
 
 \item[(C4)] the forward orbits of the points $\pm 1$ under $f_{a}$ are dense in $[-1,1]$ for all $a \in E$.
\end{itemize}

\begin{definition}
We say that a map $f_{a}$ with $a \in E$ is a \textbf{\textit{Rovella Map}}. 
\end{definition}

\begin{theorem}
(Alves-Soufi \cite{AS}) Every Rovella map is non-uniformly expanding. 
\end{theorem}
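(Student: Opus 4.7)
The plan is to verify the two defining conditions of non-uniform expansion for the one-dimensional Rovella map $f_a$ with $a\in E$: positive Lyapunov exponent on a full Lebesgue measure set, and slow recurrence of the orbit to the critical set $\mathcal{C}=\{0\}$ (where the derivative vanishes by (C1), with critical order $s$). The strategy is the classical Benedicks--Carleson binding scheme, adapted to the one-sided branches of $f_a$ emanating from $\pm 1$, and exploits (C1)--(C3) together with a distortion estimate; (C4) will be used to propagate the conclusion to a full measure set.

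First I would split each orbit into \emph{free} and \emph{bound} segments. Fix a small $\delta>0$ and call iterate $k$ a \emph{return} if $f_a^k(x)\in B_\delta(0)$. Outside $B_\delta(0)$, (C1) gives $f_a'\ge K_2\delta^{s-1}$, so on free segments we have at least a controlled (but not necessarily expanding) derivative. When $f_a^k(x)$ lands at distance $r_k:=|f_a^k(x)|<\delta$ from $0$, one attaches a \emph{binding period} $p_k$: the maximal length during which $f_a^{k+j}(x)$ shadows the critical orbit of either $+1$ or $-1$ (whichever matches the sign of $f_a^{k+1}(x)$). By the quadratic-type tangency dictated by (C1) and the Collet--Eckmann growth (C2), one shows that $p_k\asymp -\log r_k$ and that along the binding segment
\[
(f_a^{p_k})'(f_a^{k+1}(x))\;\gtrsim\;\lambda_c^{\,p_k}.
\]
Combined with the initial loss $f_a'(f_a^k(x))\asymp r_k^{s-1}$ from (C1), the full bound block contributes at least
\[
r_k^{\,s-1}\,\lambda_c^{\,p_k}\;\ge\;r_k^{\,s-1-\log\lambda_c/\log(1/r_k)\cdot(\text{const})},
\]
which is bounded below by $e^{c\,p_k}$ for some $c>0$ provided $\delta$ is small; here the basic assumption (C3) is essential to ensure that the shadowing does not break down prematurely and that the binding gain dominates the derivative loss.

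Next I would chain free and bound intervals together. Writing $n=\sum(\text{free})+\sum p_k$ and applying the chain rule with the above estimates, one obtains
\[
\frac{1}{n}\log (f_a^n)'(x)\;\ge\;\lambda\;-\;\frac{C}{n}\sum_{k\le n,\text{ return}}\bigl(-\log r_k - p_k\bigr)
\]
for some $\lambda>0$. The slow-recurrence condition then translates precisely into the statement $\frac{1}{n}\sum_k -\log r_k\to 0$, which I would prove in two steps: a bounded-distortion lemma on each pull-back of $B_\delta(0)$ along the binding (valid because the binding period is defined exactly so that distortion remains controlled), and a Borel--Cantelli type argument using (C3) and the measure-theoretic work of Rovella---the set of parameters $E$ is precisely where the critical orbits do not recur too fast, and this transfers to typical nearby orbits via distortion. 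Combining these with (C4) to ensure that the conclusion holds on a dense, and then full, Lebesgue measure set yields both non-uniform expansion and slow recurrence.

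The main obstacle will be the binding estimate and its distortion control: one needs the binding period $p_k$ to be long enough that the Collet--Eckmann gain $\lambda_c^{p_k}$ genuinely overcomes the loss $r_k^{s-1}$, yet short enough that the shadowing remains quantitatively valid. This balance is dictated by the interplay between $s$ from (C1), $\lambda_c$ from (C2), and $\alpha$ from (C3); in particular one must verify that $\alpha$ is small relative to $\log\lambda_c$, which is the content of Rovella's parameter exclusion and is what makes $E$ have positive (in fact full density) Lebesgue measure at $0$. Once this quantitative balance is established, everything else---free expansion, chain rule, and the slow-recurrence bookkeeping---follows by routine (but lengthy) estimates that I would not grind through here.
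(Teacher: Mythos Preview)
The paper does not prove this theorem at all: it is stated as a result of Alves--Soufi \cite{AS} and simply cited without argument. Your sketch is a reasonable outline of the Benedicks--Carleson binding/free-period scheme that underlies the actual proof in \cite{AS}, but for the purposes of this paper no proof is required; the theorem appears only as a citation in the examples section to exhibit Rovella maps as instances of the non-uniformly expanding framework.

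If you do wish to record a sketch, note two technical points where your outline is loose. First, the critical order for Rovella maps is $s>1$ (not $2$), so the derivative loss at a return is $r_k^{s-1}$ and the binding-period length is of order $-(s-1)\log r_k/\log\lambda_c$; the balance you allude to must be written with this $s$ explicitly, and the condition making it work is that the basic-assumption exponent $\alpha$ in (C3) is small relative to $\log\lambda_c/(s-1)$. Second, the passage from the critical orbit estimates (C2)--(C3) to Lebesgue-a.e.\ $x$ is not really a Borel--Cantelli plus (C4) argument: in \cite{AS} it goes through a large-deviations estimate for the time spent near $0$, obtained via bounded distortion on the dynamically defined partition into free and bound pieces. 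Your invocation of (C4) to ``propagate to a full measure set'' is not how the argument runs and would not by itself yield the a.e.\ conclusion.
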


\subsection{Hyperbolic Times}

The idea of hyperbolic times is a key notion on the study of non-uniformly hyperbolic dynamics and it was introduced by Alves et al. 
This is powerful to get expansion in the context of non-uniform expansion. Here, we recall the basic definitions and results on hyperbolic times that we will use later on. 
We will see that this notion is an example of a Zooming Time. 

In the following, we give definitions taken from \cite{A} and \cite{Pi1}.

\begin{definition}
Let $M$ be a compact Riemannian manifold of dimension $d \geq 1$ and $f:M \to M$ a continuous map defined on $M$.
The map $f$ is called \textbf{\textit{non-flat}} if it is a local $C^{1 + \alpha}, (\alpha >0)$ diffeomorphism in the whole manifold except in a 
non-degenerate set $\mathcal{C} \subset M$. We say that $M \neq \mathcal{C} \subset M$ is a \textbf{\textit{non-degenerate set}}
if there exist $\beta, B > 0$ such that the following two conditions hold.

\begin{itemize}
  \item $\frac{1}{B} d(x,\mathcal{C})^{\beta} \leq \frac{\parallel Df(x) v\parallel}{\parallel v \parallel} \leq B d(x,\mathcal{C})^{-\beta}$ for all $v \in T_{x}M$, for every $x \in M\backslash\mathcal{C}$.
  
  For every $x, y \in M \backslash \mathcal{C}$ with $d(x,y) < d(x,\mathcal{C})/2$ we have
  
  \item $\mid \log \parallel Df(x)^{-1} \parallel - \log \parallel Df(y)^{-1} \parallel \mid \leq \frac{B}{d(x,\mathcal{C})^{\beta}} d(x,y)$.
\end{itemize}

\end{definition}

In the following, we give the definition of a hyperbolic time \cite{ALP2}, \cite{Pi1}.

\begin{definition}
(Hyperbolic times). Let us fix $0 < b = \frac{1}{3} \min\{1,1 \slash \beta\} < \frac{1}{2} \min\{1,1\slash \beta\}$. 
Given $0 < \sigma < 1$ and $\epsilon > 0$, we will say that $n$ is a $(\sigma, \epsilon)$\textbf{\textit{-hyperbolic time}} for a point $x \in M$ 
(with respect to the non-flat map $f$ with a $\beta$-non-degenerate critical/singular set $\mathcal{C})$ if for all $1 \leq k \leq n$ we have 
\[
\prod_{j=n-k}^{n-1} \|(Df \circ f^{j}(x))^{-1}\| \leq \sigma^{k} \,\, \text{and} \,\, dist_{\epsilon}(f^{n-k}(x), \mathcal{C}) \geq \sigma^{bk}.
\]
where
\[
\text{dist}_{\epsilon}(p,\mathcal{C}) =  
\left\{ 
\begin{array}{ccc}
	dist(p,\mathcal{C}), & if & dist(p,\mathcal{C}) < \epsilon\\
	1 & if & dist(p,\mathcal{C}) \geq \epsilon. 
\end{array}
\right.
\]
We denote de set of points of $M$ such that $n \in \mathbb{N}$ is a $(\sigma,\epsilon)$-hyperbolic time by $H_{n}(\sigma,\epsilon,f)$.
\end{definition}

\begin{proposition}
(Positive frequence). Given $\lambda > 0$ there exist $\theta > 0$ and $\epsilon_{0} > 0$ such that, for every $x \in M$ and $\epsilon \in (0,\epsilon_{0}]$,
\[
\#\{1 \leq j \leq n mid \,\, x \in H_{j}(e^{-\lambda \slash 4}, \epsilon, f) \} \geq \theta n,
\]
whenever $\frac{1}{n}\sum_{i=0}^{n-1}\log\|(Df(f^{i}(x)))^{-1}\|^{-1} \geq \lambda$ and $\frac{1}{n}\sum_{i=0}^{n-1}-\log dist_{\epsilon}(x, \mathcal{C}) \leq \frac{\lambda}{16 \beta}$.
\end{proposition}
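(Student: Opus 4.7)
The plan is the classical Pliss-lemma argument of Alves, Bonatti and Viana, adapted to non-flat maps with non-degenerate singular set. Along the orbit of $x$ I set the two auxiliary sequences $a_j := \log\|(Df(f^{j-1}(x)))^{-1}\|^{-1}$ and $d_j := -\log \dist_\epsilon(f^{j-1}(x),\mathcal{C})$. Non-flatness and compactness give a uniform upper bound $a_j \leq A < \infty$, while $d_j \geq 0$ by construction of $\dist_\epsilon$. The two hypotheses translate into $\sum_{j=1}^n a_j \geq \lambda n$ and $\sum_{j=1}^n d_j \leq (\lambda/(16\beta))\,n$. The goal is to extract, with positive density in $\{1,\dots,n\}$, indices $\nu$ at which the backward sums $\sum_{j=m+1}^\nu a_j$ are large (the expansion half of the hyperbolic-time definition) and at which $d_{m+1}$ remains small compared to $\nu - m$ (the slow-recurrence half).

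The main tool is the Pliss Lemma: if $(u_j)_{j=1}^N$ with $u_j \leq H$ satisfies $\sum u_j \geq c_2 N$, then for any $c_1 \in (0,c_2)$ there are at least $\bigl((c_2-c_1)/(H-c_1)\bigr)N$ indices $\nu$ with $\sum_{j=m+1}^\nu u_j \geq c_1(\nu-m)$ for every $0 \leq m < \nu$. Rather than apply Pliss to $(a_j)$ and $(d_j)$ separately, I would apply it to the combined sequence $s_j := a_j - K d_j$, which still satisfies $s_j \leq A$ because $d_j \geq 0$ and has sum at least $\lambda\bigl(1 - K/(16\beta)\bigr)n$; the constant $K > 0$ will be chosen shortly. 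Pliss with $c_2 := \lambda\bigl(1-K/(16\beta)\bigr)$ and $c_1 := \lambda/4$ produces a set $G_n$ of density $\theta := (c_2-c_1)/(A-c_1) > 0$ such that for every $\nu \in G_n$ and every $0 \leq m < \nu$, $\sum_{j=m+1}^\nu s_j \geq (\lambda/4)(\nu-m)$. Using $d_j \geq 0$ this implies $\sum_{j=m+1}^\nu a_j \geq (\lambda/4)(\nu-m)$, yielding the expansion estimate $\prod_{j=m}^{\nu-1}\|Df(f^j(x))^{-1}\| \leq \sigma^{\nu-m}$ with $\sigma = e^{-\lambda/4}$. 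The Pliss output also gives $K d_{m+1} \leq K\sum_{j=m+1}^\nu d_j \leq (A-\lambda/4)(\nu-m)$, so with $K$ chosen so that $(A-\lambda/4)/K \leq b\lambda/4$ I would obtain $\dist_\epsilon(f^m(x),\mathcal{C}) \geq \sigma^{b(\nu-m)}$, i.e.\ the distance half of the definition.

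The main obstacle is the delicate balance of constants: $K$ must simultaneously be large enough that $(A-\lambda/4)/K \leq b\lambda/4$ and small enough that $c_2 = \lambda\bigl(1-K/(16\beta)\bigr) > c_1 = \lambda/4$. To close this loop I would follow the Alves--Bonatti--Viana trick of shrinking $\epsilon_0$ so that the effective upper bound used in the Pliss lemma improves (since $\dist_\epsilon$ is truncated at $\epsilon$, the $d_j$ vanish outside an $\epsilon$-neighbourhood of $\mathcal{C}$), and then using the slack provided by $b < 1/(2\beta)$ to adjust the Pliss parameters. Once the constants are balanced, $\theta > 0$ depends only on $\lambda, A, \beta, b$, and the number of $(\sigma,\epsilon)$-hyperbolic times of $x$ in $\{1,\ldots,n\}$ is bounded below by $\theta n$, as required.
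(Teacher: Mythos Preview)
The paper does not prove this proposition; it is quoted without proof from the literature (Alves--Bonatti--Viana, Alves--Luzzatto--Pinheiro). So the question is simply whether your outline closes.

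Your choice of tool (the Pliss lemma) and the two auxiliary sequences $a_j,d_j$ are exactly right, and the combined sequence $s_j=a_j-Kd_j$ is a legitimate variant of the standard argument. The gap is precisely where you locate it, and your proposed fix does not actually close it. From your single Pliss run you extract $K\sum_{j=m+1}^{\nu} d_j\le (A-\lambda/4)(\nu-m)$ by bounding $\sum a_j$ above by $A(\nu-m)$; this forces $K\ge 4(A-\lambda/4)/(b\lambda)$, while the Pliss hypothesis $c_2>c_1$ forces $K<12\beta$. These are compatible only when $A<\lambda/4+3b\beta\lambda=\lambda/4+\lambda\min\{\beta,1\}$, a constraint on the map that is \emph{not} assumed. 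Shrinking $\epsilon_0$ cannot help here: $A=\sup_j a_j$ depends only on $Df$ and is untouched by $\epsilon$; making more of the $d_j$ vanish pushes $s_j$ \emph{up} toward $a_j$, so the Pliss upper bound remains $A$. The ``slack from $b<1/(2\beta)$'' is equally unavailable, since $b$ is the fixed constant in the definition of $(\sigma,\epsilon)$-hyperbolic time, not a free parameter.

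The argument in the cited references avoids this by running Pliss \emph{twice} and intersecting, rather than combining into a single sequence. One application to $(a_j)$ (threshold $\lambda/2$, say) produces a set $G_1$ of positive density $\theta_1$ with the expansion property; a second, ``reverse'' Pliss applied to $(-d_j)$ (using $d_j\ge 0$ and the slow-recurrence hypothesis $\sum d_j\le(\lambda/(16\beta))n$) produces a set $G_2$ whose density $\theta_2$ is close to $1$ because the bound on $\sum d_j$ is small relative to the threshold $b\lambda/4$. The specific numerical choices $e^{-\lambda/4}$, $b=\tfrac13\min\{1,1/\beta\}$, and $\lambda/(16\beta)$ are tuned so that $\theta_1+\theta_2>1$, whence $G_1\cap G_2$ has density $\theta=\theta_1+\theta_2-1>0$. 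The point is that in the two-Pliss route the recurrence density $\theta_2$ is controlled by $\lambda/(16\beta)$ alone and does \emph{not} see $A$; in your combined route the distance estimate is contaminated by $A$ through the crude bound $\sum a_j\le A(\nu-m)$, which is why the constants refuse to balance.
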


Denote by $\mathcal{H}$ the set of point $x \in M$ such that
\[
\displaystyle \limsup_{n \to \infty} \frac{1}{n} \sum_{i=0}^{n-1} \log \parallel Df(f^{i}(p))^{-1}\parallel^{-1} < -\lambda.
\] 
and
\[
\displaystyle \limsup_{n \to \infty} \frac{1}{n} \sum_{i=0}^{n-1} - \log \text{dist}_{\delta}(p,\mathcal{C}) < \epsilon.
\]
If $f$ is non-uniformly expanding, it follows from the proposition that the points of $\mathcal{H}$ have infinitely many moments with positive frequency of hyperbolic times. In particular, they have infinitely many hyperbolic times.

The following proposition shows that the hyperbolic times are indeed zooming times, where the zooming contraction is $\alpha_{k}(r) = \sigma^{k/2}r$.

\begin{proposition}
Given $\sigma \in (0,1)$ and $\epsilon > 0$, there is $\delta,\rho > 0$, depending only on $\sigma$ and $\epsilon$ and on the map $f$, such that if $x \in H_{n}(\sigma,\epsilon,f)$ then there exists a neighbourhood $V_{n}(x)$ of $x$ with the following properties:

\begin{enumerate}
  \item[(1)] $f^{n}$ maps $\overline{V_{n}(x)}$ diffeomorphically onto the ball $\overline{B_{\delta}(f^{n}(x))}$;
  \item[(2)] $dist(f^{n-j}(y),f^{n-j}(z)) \leq \sigma^{j\slash 2} dist(f^{n}(y), f^{n}(z)), \text{for all} \, \, y,z \in V_ {n}(x)$ and $1 \leq j < n$.
  \item[(3)]$\log \frac{\mid \det Df^{n}(y)\mid}{\mid \det Df^{n}(z)\mid} \leq \rho d(f^{n}(y),f^{n}(z))$.
\end{enumerate}

for all $y,z \in V_{n}(x)$.
\end{proposition}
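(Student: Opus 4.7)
The proof is by backward induction: I would construct $V_n(x)$ by successively pulling back $\overline{B_\delta(f^n(x))}$ one step at a time, choosing $\delta$ small so that at each stage the pre-image sits in a region where $f$ is a uniform diffeomorphism. Concretely, set $W_0 := \overline{B_\delta(f^n(x))}$ and define $W_k$ inductively as the connected component of $f^{-1}(W_{k-1})$ containing $f^{n-k}(x)$; then put $V_n(x) := W_n$. The aim of the induction is the geometric decay $\mathrm{diam}(W_k) \leq 2\sigma^{k/2}\delta$, which simultaneously guarantees that the pull-back stays inside the non-flat region (so property (1) holds by repeated application of the local inverse function theorem) and yields property (2) once one observes that the same derivative estimate applied to any pair $y,z \in V_n(x)$, rather than only to the orbit point, gives $d(f^{n-j}(y),f^{n-j}(z)) \leq \sigma^{j/2} d(f^n(y),f^n(z))$.

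The key derivative estimate combines the two halves of the hyperbolic-time condition. The product bound $\prod_{j=n-k}^{n-1}\|Df(f^{j}(x))^{-1}\| \leq \sigma^{k}$ gives contraction at the orbit point $x$, while the slow-recurrence bound $\mathrm{dist}_\epsilon(f^{n-k}(x),\mathcal{C}) \geq \sigma^{bk}$ together with the non-flat inequality $\|Df(y)^{-1}\| \leq B\, d(y,\mathcal{C})^{-\beta}$ controls how much $\|Df^{-1}\|$ can vary within $W_{k-1}$. Since $b\beta \leq 1/3 < 1/2$, choosing $\delta$ small (depending only on $\sigma$, $\epsilon$, $B$, $\beta$) ensures $W_{k-1}$ lies within $\tfrac{1}{2} d(f^{n-k}(x),\mathcal{C})$ of $f^{n-k}(x)$; the at-worst $\sigma^{-b\beta k}$ inflation of $\|Df^{-1}\|$ over $W_{k-1}$ is then absorbed by the $\sigma^k$ gain from the hyperbolic-time product, leaving a net contraction of $\sigma^{k/2}$.

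For property (3), I would write $\log|\det Df^{n}(y)| - \log|\det Df^{n}(z)|$ as a telescoping sum over $j=0,\dots,n-1$ and estimate each summand using the second clause of the non-flat definition, namely $|\log\|Df(p)^{-1}\| - \log\|Df(q)^{-1}\|| \leq B\, d(p,\mathcal{C})^{-\beta} d(p,q)$, adapted to $\det Df$ via the $C^{1+\alpha}$ regularity of $f$ off $\mathcal{C}$. Plugging in the contraction from (2) and the slow-recurrence bound converts the sum into a geometric series with ratio $\sigma^{1/2 - b\beta} < 1$, yielding a finite constant $\rho$ independent of $n$ with $|\log|\det Df^n(y)|/|\det Df^n(z)|| \leq \rho\, d(f^n(y),f^n(z))$.

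The main obstacle is precisely the tight interplay between the derivative-product bound (which gives contraction but no distortion information) and the slow-recurrence bound (which keeps the pull-backs far enough from $\mathcal{C}$ for the non-flat estimates to be applicable). The specific choice $b = \tfrac{1}{3}\min\{1,1/\beta\}$ in the definition of a hyperbolic time was engineered to guarantee $b\beta < 1/2$, so that the inflation by powers of $\sigma^{-b\beta}$ cannot overwhelm the contraction $\sigma^{1/2}$ in either the inductive containment of the pre-balls or the summation producing the distortion constant; ensuring this quantitatively — in particular that $\delta$ can be fixed uniformly in $n$ and $x$ — is the delicate point of the argument.
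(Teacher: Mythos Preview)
The paper does not supply its own proof of this proposition; it is stated as a known result drawn from the cited literature (notably \cite{ALP2} and \cite{Pi1}). Your proposal is correct and is precisely the standard argument found in those references: the backward-induction construction of the pre-ball, the interplay between the derivative-product contraction $\sigma^{k}$ and the slow-recurrence bound $\sigma^{bk}$ (with the key inequality $b\beta < 1/2$ ensuring the former dominates), and the geometric-series estimate for the distortion are exactly how the result is established there.
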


The sets $V_{n}(x)$ are called hyperbolic pre-balls and their images $f^{n}(V_{n}(x)) = B_{\delta}(f^{n}(x))$, hyperbolic balls.

\bigskip

In the following, we give definitions for a map on a metric space to have similar behaviour to maps with hyperbolic times and which can be found in \cite{Pi1}.  

Given $M$ a metric spaces and $f: M \to M$, we define for $p \in M$:
\[
\displaystyle \mathbb{D}^{-}(p) = \liminf_{x \to p} \frac{d(f(x),f(p)}{d(x,p)}
\]
Define also,
\[
\displaystyle \mathbb{D}^{+}(p) = \limsup_{x \to p} \frac{d(f(x),f(p)}{d(x,p)}
\]
We will consider points $x \in M$ such that 
\[
\displaystyle \limsup_{n \to \infty} \frac{1}{n} \sum_{i=0}^{n-1} \log \mathbb{D}^{-} \circ f^{i}(x) > 0.
\]  
The critical set $\mathcal{C}$ is the set of points $x \in M$ such that $\mathbb{D}^{-}(x) = 0$ or $\mathbb{D}^{+}(x) = \infty$. For  the non-degenerateness we ask that $\mathcal{C} \neq M$ and there exist $B, \beta >0$ such that

\begin{itemize}
 
  \item $\frac{1}{B} d(x,\mathcal{C})^{\beta} \leq \mathbb{D}^{-}(x) \leq \mathbb{D}^{+}(x) \leq B d(x,\mathcal{C})^{-\beta}, x \not \in \mathcal{C}$.
  
  For every $x, y \in M \backslash \mathcal{C}$ with $d(x,y) < d(x,\mathcal{C})/2$ we have
  
  \item $\mid \log \mathbb{D}^{-}(x) - \log \mathbb{D}^{-}(y) \mid \leq \frac{B}{d(x,\mathcal{C})^{\beta}} d(x,y)$.

\end{itemize}
 
With these conditions we can see that all the consequences for hyperbolic times are valid here and the expanding sets and measures are zooming sets and measures.

\begin{definition}
We say that a map is \emph{conformal at p} if $\mathbb{D}^{-}(p) = \mathbb{D}^{+}(p)$. So, we define
\[
\displaystyle \mathbb{D}(p) = \lim_{x \to p} \frac{d(f(x),f(p)}{d(x,p)}.
\]
\end{definition}

Now, we give an example of such an open non-uniformly expanding map.  

\subsection{Expanding sets on a metric space} Let $\sigma : \Sigma_{2}^{+} \to \Sigma_{2}^{+}$ be the one-sided shift, with the usual metric:
\[
\displaystyle d(x,y) = \sum_{n=1}^{\infty} \frac{\mid x_{n} - y_{n} \mid}{2^{n}},
\]
where $x = \{x_{n}\}, y = \{y_{n}\}$. We have that $\sigma$ is a conformal map such that $\mathbb{D}^{-}(x) = 2, \text{for all} \, \, \, x \in \Sigma_{2}^{+}$. Also, every forward invariant set (in particular the whole $\Sigma_{2}^{+}$)  and all invariant measures for the shift $\sigma$ are expanding (then they are zooming). In particular, if we consider an invariant set that is not dense such that the reference measure has a Jacobian with bounded distortion, we can obtain an open shift map with $H \neq \emptyset$. To be precise, by taking any (previously fixed) zooming set $\Lambda \subset \Sigma_{2}^{+}$ which is not dense such that the reference measure has a Jacobian with bounded distortion, we apply our Theorem \ref{A} to obtain an open zooming system and a Markov structure adapted to a hole $H \subset \Sigma_{2}^{+}$ such that $H \cap \Lambda = \emptyset$. It is enough to take $r_{0}>0$ as in Theorem \ref{A} such that one of the balls of the open cover is disjoint from $\Lambda$. Hence, we can apply our Theorems \ref{B} and \ref{C} to obtain equilibrium states. Afterwards, we obtain uniqueness as explained in Remark \ref{remark} and Section \ref{Uniqueness}.

\subsection{Zooming sets on a metric space (not expanding)} Let $\sigma : \Sigma_{2}^{+} \to \Sigma_{2}^{+}$ be the one-sided shift, with the following metric for $\sum_{n=1}^{\infty} b_{n} < \infty$:
\[
\displaystyle d(x,y) = \sum_{n=1}^{\infty} b_{n}\mid x_{n} - y_{n} \mid,
\]
where $x = \{x_{n}\}, y = \{y_{n}\}$ and $b_{n+k} \leq b_{n}b_{k}$ for all $n,k \geq 1$. By induction, it means that $b_{n} \leq b_{1}^{n}$. Let us suppose that $b_{n} \leq a_{n}:=(n+b)^{-a}, a>1, b>0$ for all $n \geq 1$. 

We claim that $a_{n}$ defines a Lipschitz contraction for the shift map. We require that there exists $n_{0} > 1$ such that $b_{n} > a_{1}^{n} \geq b_{1}^{n}$ for $n \leq n_{0}$. So, the contraction is not exponential. In fact, if $x,y$ belongs to the cylinder $C_{k}$ we have
\begin{eqnarray*}
	\displaystyle d(x,y) &=& \sum_{n=1}^{\infty} b_{n}\mid x_{n} - y_{n} \mid = \sum_{n=k+1}^{\infty} b_{n}\mid x_{n} - y_{n} \mid = \sum_{n=1}^{\infty} b_{n+k}\mid x_{n+k} - y_{n+k} \mid\\
	&\leq& b_{k} \sum_{n=1}^{\infty} b_{n}\mid x_{n+k} - y_{n+k} \mid = b_{k} d(\sigma^{k}(x),\sigma^{k}(y)) \leq a_{k} d(\sigma^{k}(x),\sigma^{k}(y)).
\end{eqnarray*}
It implies that
\begin{eqnarray*}
	\displaystyle d(\sigma^{i}(x),\sigma^{i}(y)) \leq  a_{k-i} d(\sigma^{k-i}(\sigma^{i}(x)),\sigma^{k-i}(\sigma^{i}(y)))= a_{k-i}d(\sigma^{k}(x),\sigma^{k}(y)), i \leq k.
\end{eqnarray*}
It means that the sequence $a_{n}$ defines a Lipschitz contraction, as we claimed.

Now, every forward invariant set (in particular the whole $\Sigma_{2}^{+}$)  and all invariant measures for the shift $\sigma$ are not expanding but they are  zooming. In particular, if we consider an invariant set that is not dense such that the reference measure has a Jacobian with bounded distortion, we can obtain an open shift map with $H \neq \emptyset$. To be precise, by taking any (previously fixed) zooming set $\Lambda \subset \Sigma_{2}^{+}$ which is not dense such that the reference measure has a Jacobian with bounded distortion, we apply our Theorem \ref{A} to obtain an open zooming system and a Markov structure adapted to a hole $H \subset \Sigma_{2}^{+}$ such that $H \cap \Lambda = \emptyset$. It is enough to take $r_{0}>0$ as in Theorem \ref{A} such that one of the balls of the open cover is disjoint from $\Lambda$. Hence, we can apply our Theorems \ref{B} and \ref{C} to obtain equilibrium states. Afterwards, we obtain uniqueness as explained in Remark \ref{remark} and Section \ref{Uniqueness}.

\subsection{Uniformly expanding maps} As can be seen in \cite{OV} Chapter 11, we have the so-called \textbf{\textit{uniformly expanding maps}} which is defined on a compact differentiable manifold $M$ as a $C^{1}$ map $f:M \to M$ (with no critical set) for which there exists $\sigma > 1$ such that
\[
\|Df(x)v\|\geq \sigma \|v\|, \,\, \text{for every} \,\, x \in M, v \in T_{x}M.
\]

For compact metric spaces $(M,d)$ we define it as a continuous map $f:M \to M$, for which there exists $\sigma > 1, \delta>0$ such that for every $x \in M$ we have that the image of the ball $B(x,\delta)$ contains a neighbourhood of the ball $B(f(x),\delta)$ and
\[
d(f(a),f(b)) \geq \sigma d(a,b), \,\, \text{for every} \,\, a,b \in B(x,\delta).
\]
We observe that the uniformly expanding maps on differentiable manifolds satisfy the conditions for the definition on compact metric spaces, when they are seen as Riemannian manifolds.

\subsection{Local diffeomorphisms}\label{local} As can be seen in details in \cite{A}, we will briefly describe a class of non-uniformly expanding maps.

Here we present a robust ($C^{1}$ open) classes of local diffeomorphisms (with no critical set) that are non-uniformly expanding. Such classes of maps can be obtained, e.g., through deformation of a uniformly expanding map by isotopy inside some small region. In general, these maps are not uniformly expanding: deformation can be made in such way that the new map has periodic saddles.

Let $M$ be a compact manifold supporting some uniformly expanding map $f_{0}$. $M$ could be the $d$-dimensional torus $\mathbb{T}^{d}$, for instance. Let $V \subset M$ be some small compact domain, so that the restriction of $f_{0}$ to $V$ is injective. Let $f$ be any map in a sufficiently small $C^{1}$-neighbourhood $\mathcal{N}$ of $f_{0}$ so that:

\begin{itemize}
	\item $f$ is \textit{volume expanding everywhere}: there exists $\sigma_{1} > 1$ such that
	\[
	|\det Df(x)| > \sigma_{1} \,\, \text{for every} \,\, x \in M;
	\]	
	
	\item $f$ is \textit{expanding outside} $V$: there exists $\sigma_{0} > 1$ such that
	\[
	\|Df(x)^{-1}\| < \sigma_{0} \,\, \text{for every} \,\, x \in M \backslash V;
	\] 
	
	\item $f$ is \textit{not too contracting on} $V$: there is some small $\delta > 0$ such that
	\[
	\|Df(x)^{-1}\| < 1 + \delta \,\, \text{for every} \,\, x \in V.
	\]
\end{itemize}

In \cite{A} it is shown that this class satisfy the condition for non-uniform expansion. In the following we show a Lemma from \cite{A} which proves that such maps are non-uniformly expanding with Lebesgue as a reference measure.
\begin{lemma}
	Let $B_{1},\dots, B_{k},B_{k+1}= V$ a partition of $M$ into domains such that $f$ is injective on $B_{j}, 1 \leq j \leq p+1$. There exists $\theta > 0$  such that the orbit of Lebesgue almost every point $x \in M$ spends a fraction $\theta$ of the time in $B_{1} \cap \dots B_{p}$, that is,
	\[
	\#\{0\leq j < n \mid f^{j}(x) \in B_{1} \cap \dots B_{p}\} \geq \theta n,
	\]
	for every large $n \in \mathbb{N}$.
\end{lemma}

\subsection{Open zooming systems from local diffeomorphisms} We can obtain an open zooming system such that the zooming set $\Lambda$ is disjoint from the hole $H$ ($\Lambda \cap H = \emptyset$) using a local diffeomorphism $f : M \to M$ which is non-uniformly expanding as in the subsection \ref{local}. 

Let the zooming set $\Lambda = \cap_{j=-\infty}^{\infty}f^{j}(M\backslash V)$ with positive Lebesgue measure $m$. Since $\Lambda \cap V = \emptyset$, we can take a zooming reference measure $\mu = m / m(\Lambda)$ which has a Jacobian with bounded distortion. The zooming set $\Lambda$ is disjoint from $V$ and we can take  the hole $H \subset V$ given by Theorem \ref{A} (and $\Lambda \cap H = \emptyset$). This setup now allows us to apply Theorems \ref{B} and \ref{C} to obtain existence and finiteness of (open) equilibrium state (uniqueness afterwards). We observe that in the work \cite{ALP2}[Lemma 2.1](3) we have that the Lebesgue measure has a Jacobian with bounded distortion.

As a concrete example on the interval $[a,b]$, we can take a dynamically defined Cantor set with positive Lebesgue measure. It can be seen in \cite{PT}[Chapter 4] as an expanding map $g$ over a disjoint union of intervals $P = I_{1} \uplus \dots \uplus I_{k}$ onto the interval $[a,b]$, where $I_{j} \in [a,b]$ is a compact interval for every $1 \leq j \leq k$, that is, every interval $I_{j}$ is taken onto $[a,b]$. Outside the union $P$ we can define the map to have a measurable map $f:[a,b] \to [a,b]$. In \cite{PT} we see that the map $g$ has bounded distortion and we can extend it to the map $f$ preserving this property. The hole $H$ can be taken outside the union $P$.

\end{document}